\documentclass[11pt]{amsart}

\raggedbottom

\usepackage[
    paper=a4paper, 
    margin=2.54cm,
]{geometry}
\linespread{1}
\usepackage{setspace}
\usepackage{url}
\usepackage[dvipsnames]{xcolor}
\usepackage{tikz}
\usepackage{tikz-cd}
\tikzcdset{
    every label/.append style = {font = \normalsize},
    ampersand replacement=\&,
}
\usepackage{amsthm}
\usepackage{amsmath}
\usepackage{amssymb}
\usepackage{mathtools}
\usepackage{mathrsfs}
\usepackage[breaklinks]{hyperref}
\hypersetup{
    colorlinks = true,
    linkcolor = OliveGreen,
    citecolor = OliveGreen,
    urlcolor = OliveGreen
}
\usepackage[capitalise]{cleveref}
\usepackage{ulem} 

\usepackage{silence}
\WarningFilter{latex}{Writing or overwriting}
\begin{filecontents*}[overwrite]{mrnumber.dbx}
    \DeclareDatamodelFields[type=field,datatype=verbatim]{mrnumber}
    \DeclareDatamodelEntryfields{mrnumber}
\end{filecontents*}
\usepackage[
    citestyle=alphabetic,
    bibstyle=alphabetic,
    maxalphanames=99,
    maxnames=50,
    giveninits=true,
    datamodel=mrnumber,
]{biblatex}
\renewbibmacro{in:}{}
\AtEveryBibitem{%
    \clearfield{month}
    \clearfield{issn}
    \clearfield{isbn}
    \clearfield{number}
    \clearfield{url}
    \ifentrytype{book}{\clearfield{pages}}{}
    \ifentrytype{book}{\clearfield{series}}{}
    \ifentrytype{misc}{}{\clearfield{note}}
}
\DeclareNameAlias{author}{given-family}
\DeclareFieldFormat*{title}{\textit{#1}}
\DeclareFieldFormat[article]{journaltitle}{#1,\space}
\DeclareFieldFormat{volume}{\textbf{#1}\space}
\DeclareFieldFormat[incollection]{booktitle}{``#1''}
\DeclareFieldFormat{pages}{#1}
\DeclareFieldFormat{mrnumber}{%
  MR\addcolon\space
  \ifhyperref
    {\href{http://www.ams.org/mathscinet-getitem?mr=#1}{\nolinkurl{#1}}}
    {\nolinkurl{#1}}}

\renewbibmacro*{doi+eprint+url}{%
  \iftoggle{bbx:doi}
    {\printfield{doi}}
    {}%
  \newunit\newblock
  \printfield{mrnumber}%
  \newunit\newblock
  \iftoggle{bbx:eprint}
    {\usebibmacro{eprint}}
    {}%
  \newunit\newblock
  \iftoggle{bbx:url}
    {\usebibmacro{url+urldate}}
    {}}

\usepackage{enumitem}
\setlist[itemize]{noitemsep, nolistsep}
\setlist[enumerate]{noitemsep, nolistsep}
\setlist[enumerate,1]{
    label = (\alph*),
    ref = (\alph*)
}
\makeatletter
\def\namedlabel#1#2{\begingroup
    #2%
    \def\@currentlabel{#2}%
    \phantomsection\label{#1}\endgroup
}
\makeatother

\theoremstyle{definition}
\newtheorem{thm}{T}
\numberwithin{thm}{section}
\numberwithin{equation}{section}
\newtheorem{definition}[thm]{Definition}
\newtheorem{theorem}[thm]{Theorem}
\newtheorem{lemma}[thm]{Lemma}
\newtheorem{corollary}[thm]{Corollary}
\newtheorem{proposition}[thm]{Proposition}
\newtheorem{remark}[thm]{Remark}
\newtheorem{example}[thm]{Example}
\newtheorem{notation}[thm]{Notation}

\newcommand{\hypotheses}{Let $(Q,P)$ be a weakly quasi-lattice ordered group, and let $\Lambda$ be a $P$-graph with $\FA{\Lambda} \neq \emptyset$.~}
\newcommand{\hypothesesFA}{Let $(Q,P)$ be a weakly quasi-lattice ordered group, and let $\Lambda$ be a $P$-graph with $\FA{\Lambda} = \Lambda$.~}

\newcommand{\define}[1]{\emph{#1}}

\newcommand{\powerset}[1]{\mathcal{P}(#1)}
\newcommand{\setof}[2]{\left\{ #1 \;\middle|\; #2 \right\}}

\newcommand{\N}{\mathbb{N}}

\renewcommand{\r}{\boldsymbol{\mathrm{r}}}
\newcommand{\s}{\boldsymbol{\mathrm{s}}}
\renewcommand{\d}{\boldsymbol{\mathrm{d}}}
\newcommand{\FA}[1]{\mathrm{FA}(#1)}
\newcommand{\FAr}[1]{\mathrm{FA}_{\r}(#1)}
\newcommand{\tg}{\Lambda_0}
\newcommand{\Yeends}{\Lambda^\mathrm{Yee}}

\newcommand{\filters}[1]{\mathcal{F}(#1)}
\newcommand{\pathfromto}[3]{#1(#2,#3)}
\newcommand{\principal}[1]{F(#1)}
\newcommand{\ultrafilters}[1]{\mathcal{U}(#1)}
\newcommand{\PS}[1]{\mathcal{F}_{\mathrm{FA}}(#1)}
\newcommand{\BPS}[1]{\partial #1}

\newcommand{\shifton}[2]{#1 #2}
\newcommand{\shiftoff}[2]{#2 \cdot #1}
\newcommand{\rpa}[2]{#1 \cdot #2}
\newcommand{\rpadom}[1]{\textrm{dom}(#1)}
\newcommand{\rpacod}[1]{\textrm{im}(#1)}
\newcommand{\rpamap}[1]{T_{#1}}
\newcommand{\sdpg}[3]{G(#1,#2,#3)}
\newcommand{\sdpgcylinder}[4]{Z(#1,#2,#3,#4)}

\newcommand{\reduction}[2]{#1|_{#2}}
\newcommand{\composablepairs}[1]{{#1}^{(2)}}
\newcommand{\unitspace}[1]{#1^{(0)}}

\newcommand{\PG}[1]{G_{#1}}
\newcommand{\BPG}[1]{G_{\BPS{#1}}}

\newcommand{\cylindersymbol}{\mathcal{Z}}
\newcommand{\Fell}[3]{\cylindersymbol_{#1}\left(#2 \setminus #3\right)}
\newcommand{\Fellin}[2]{\cylindersymbol_{#1}\left(#2\right)}
\newcommand{\Fellgpd}[5]{\cylindersymbol_{#1}\left(#2 \setminus #3, #4 \setminus #5\right)}

\newcommand{\Spielbergstopology}{\tau^\mathrm{Spi}}
\newcommand{\Spielbergsgroupoid}[1]{G^\mathrm{Spi}_{#1}}

\addbibresource{bibliography.bib}
\setcounter{biburlnumpenalty}{100}

\begin{document}

\title[
    Path groupoids of nonfinitely aligned higher-rank graphs
]{
    A local treatment of finite alignment and path groupoids of nonfinitely aligned higher-rank graphs
}

\author[M. Jones]{Malcolm Jones}
\address[M. Jones]{
    Institute of Mathematics, Physics and Mechanics,
    Ljubljana 1000,
    Slovenia
}
\email{\href{mailto:malcolm.jones@imfm.si}{malcolm.jones@imfm.si}}

\begin{abstract}
    We give a local treatment of finite alignment by identifying the finitely aligned part of any (not necessarily finitely aligned) higher-rank graph. 
    We show the finitely aligned part is itself a constellation and forms a finitely aligned relative category of paths together with the original higher-rank graph. 
    We show that the elements of the finitely aligned part are precisely those whose cylinder sets are compact, which allows us to give novel definitions of locally compact path and boundary-path spaces for nonfinitely aligned higher-rank graphs. 
    We extend a semigroup action and the associated semidirect product groupoid developed by Renault and Williams to define ample Hausdorff path and boundary-path groupoids. 
    The groupoids are amenable for nonfinitely aligned $k$-graphs by a result of Renault and Williams. 
    In the finitely aligned case, the path groupoids coincide with Spielberg's groupoids, and the boundary-path groupoid has an inverse semigroup model via a result of Ortega and Pardo.
\end{abstract}
    
\maketitle

\section{Introduction}

Higher-rank graphs are a generalisation of directed graphs introduced in \cite{KP00} as $k$-graphs and further generalised in \cite{BSV13} to $P$-graphs whereby relations can be imposed on edges. 
There is an extensive literature stemming from \cite{KPRR97} that associates \textit{path spaces} and \textit{path groupoids} to $P$-graphs and even more general combinatorial data \cite{Spi20}. 
This has been extremely beneficial in the study of C*-algebras \cite{KPRR97}, Leavitt path algebras \cite{AAP05} and has implications for point-set topology \cite{PW05} and higher-dimensional Thompson groups \cite{Bri04, LSV24}.
The one-vertex higher-rank graphs form a family of monoids that has received special attention in the context of C*-algebras \cite{DPY08,DY09} as well as semigroup theory \cite{LV24}.

In the above settings, the path space needs to be locally compact, but not all higher-rank graphs have a locally compact path space (see \cite[Remark 3.7]{Yee07}). 
However, the path space of any \textit{finitely aligned} higher-rank graph is locally compact \cite[Proposition 5.7]{FMY05}. 
The finite alignment condition was first identified in \cite{RS05} (in the context of product systems of graphs). 
A monoid is finitely aligned if and only if it is right ideal Howson as in \cite{CG21}.
When studying the C*-algebras of $k$-graphs, finite alignment has been called the ``natural boundary of the subject'' \cite{FMY05}. 
More recently, there is growing interest in nonfinitely aligned examples. Spielberg associated ample (not necessarily Hausdorff) groupoids to each (not necessarily finitely aligned) left cancellative small category in \cite{Spi20}, including all higher-rank graphs. 
While a regular representation of the universal groupoid C*-algebra factors through the reduced groupoid C*-algebra, the induced representation of the reduced groupoid C*-algerba is not always faithful \cite[Proposition 11.4]{Spi20} (see also \cite{NS23}).
Thus, there is more to understand about groupoids associated to nonfinitely aligned data, and this is the focus of this work.

\textit{Path} groupoids are well-understood for finitely aligned higher-rank graphs, so we desire a path groupoid for general higher-rank graphs in order to see which results from the finitely aligned case can be generalised. 
Given a general higher-rank graph $\Lambda$, the unit space $X(\Lambda)$ of Spielberg's groupoid from \cite{Spi20} is the spectrum of a certain commutative C*-algebra associated to $\Lambda$. 
The elements of $X(\Lambda)$ identify with paths if $\Lambda$ is finitely aligned \cite[Theorem 6.8]{Spi20}, but not otherwise. 
In this sense, Spielberg's groupoids are not \textit{path} groupoids.

In this paper, we associate locally compact path spaces and ample Hausdorff path groupoids to higher-rank graphs that are not necessarily finitely aligned.
After establishing preliminaries in \cref{sec:prelims}, our first contribution is a local treatment of finite alignment in \cref{sec:finite_alignment}. 
Specifically, given a higher-rank graph $\Lambda$ and $\lambda \in \Lambda$, we define what it means for $\Lambda$ to be \textit{finitely aligned at $\lambda$} (\cref{def:FA}). 
The set $\FA{\Lambda} \coloneqq \{\lambda \in \Lambda \mid \Lambda \text{ is finitely aligned at } \lambda\}$ enjoys a number of properties related to the category structure on the higher-rank graph $\Lambda$ (\cref{lem:finite_alignment}). 
Though $\FA{\Lambda}$ can fail to be a higher-rank graph (\cref{sec:no_factorisation}), $\FA{\Lambda}$ is a constellation (\cref{prop:constellations}) in the sense of \cite{GH10}, which is a sort of ``one-sided category''. 
Also, by adjoining some units to $\FA{\Lambda}$, we attain a \textit{finitely aligned} relative category of paths (\cref{prop:finitely_aligned_relative_category_of_paths}) in the sense of \cite{Spi14}, even if $\Lambda$ is not finitely aligned.

In \cref{sec:path_space}, we recall the space $\filters{\Lambda}$ of filters in $\Lambda$ from \cite{BSV13}, which is a suitable locally compact path space when $\Lambda$ is finitely aligned, but can fail to be locally compact if $\Lambda$ is not finitely aligned \cite[Example 3.7]{Yee07}. 
Our contribution in this section is a new definition of \textit{locally compact} path and boundary-path spaces, $\PS{\Lambda}$ and $\BPS{\Lambda}$, whenever $\FA{\Lambda} \neq \emptyset$ (\cref{thm:path_spaces}). 
Our path space $\PS{\Lambda}$ is an open subspace of $\filters{\Lambda}$. 
\cref{thm:path_spaces} (among other results in this paper) depends on \cref{lem:cpt_char}, which says $\lambda \in \FA{\Lambda}$ if and only if the cylinder set of $\lambda$ in $\filters{\Lambda}$ is compact.

In \cref{sec:path_groupoid}, we construct a semigroup action on the path space like in \cite{RW17} (\cref{thm:PS_semigroup_action_locally_compact}), and we define the path and boundary-path groupoids to be the associated semidirect product groupoids from \cite{RW17}, which are ample Hausdorff groupoids (\cref{thm:path_groupoids}). 
By an amenability result from \cite{RW17}, the path and boundary-path groupoids are amenable for any $k$-graph (\cref{cor:amenable}). 

In \cref{sec:reconciliation}, we show our path and boundary-path groupoids are topologically isomorphic to Spielberg's groupoids from \cite{Spi14} in the finitely aligned case (\cref{thm:FA_groupoid_isomorphism}); consequently, our boundary-path groupoid is topologically isomorphic to Exel's tight groupoid of a certain inverse semigroup from \cite{OP20} (\cref{cor:inverse_semigroup_model}). 

\section{Preliminaries}
    \label{sec:prelims}

\subsection{Topology on the power set of a set}

Let $\Lambda$ be a countable set.
We describe a topology on the power set $\powerset{\Lambda}$ that is a special case of the pointwise topology on function spaces, where $\powerset{\Lambda}$ identifies with functions from $\Lambda$ to the discrete space $\{0, 1\}$ (see for example \cite[Definition 42.1]{Wil70}).
Consider the product topology on $\prod_{\lambda \in \Lambda} \{0,1\}$, where each $\{0,1\}$ is discrete.
The set $\prod_{\lambda \in \Lambda} \{0,1\}$ identifies with $\powerset{\Lambda}$ such that each standard basic open set in $\prod_{\lambda \in \Lambda} \{0,1\}$ identifies with a \define{cylinder}
\[
    \Fell{}{K_1}{K_2} 
    \coloneqq \setof{
        x \in \powerset{\Lambda}
    }{
        K_1 \subseteq x \subseteq \Lambda \setminus K_2
    },
\]
for some finite (possibly empty) $K_1, K_2 \subseteq \Lambda$.
For each $\mu \in \Lambda$ and finite $K \subseteq \Lambda$, let
\[
    \Fell{}{\mu}{K} \coloneqq \Fell{}{\{\mu\}}{K} 
    \text{ and } 
    \Fellin{}{\mu} \coloneqq \Fell{}{\mu}{\emptyset}.
\]
For each $X \subseteq \powerset{\Lambda}$, the collection of 
\begin{equation}
    \label{eq:subspace_cylinders}
    \Fell{X}{K_1}{K_2} \coloneqq \Fell{}{K_1}{K_2} \cap X,
\end{equation}
where $K_1, K_2 \subseteq \Lambda$ are finite is a basis for the subspace topology on $X$.
We define $\Fell{X}{\mu}{K}$ and $\Fellin{X}{\mu}$ similarly.

\begin{lemma}
    \label{lem:compact_Hausdorff_power_set}
    Let $\Lambda$ be a countable set.
    The collection of cylinders in $\powerset{\Lambda}$ is a basis for a second-countable compact Hausdorff topology on $\powerset{\Lambda}$.
    Moreover, each cylinder is compact.
\end{lemma}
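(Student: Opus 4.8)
The plan is to carry everything through the identification $\powerset{\Lambda}\cong\prod_{\lambda\in\Lambda}\{0,1\}$ (a subset corresponding to its indicator function) already set up before the statement, and to deduce each claim from standard facts about product topologies. First I would verify that the cylinders are precisely the standard basic open sets of the product topology, together with $\emptyset$: a standard basic open set is determined by fixing finitely many coordinates, and since each coordinate space is $\{0,1\}$, a nondegenerate choice amounts to a finite set $K_1$ of coordinates forced to $1$ and a finite set $K_2$ forced to $0$ with $K_1\cap K_2=\emptyset$, which is exactly $\Fell{}{K_1}{K_2}$; when $K_1\cap K_2\neq\emptyset$ one gets $\emptyset$, which is harmless. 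Since $\powerset{\Lambda}=\Fell{}{\emptyset}{\emptyset}$ and $\Fell{}{K_1}{K_2}\cap\Fell{}{K_1'}{K_2'}=\Fell{}{K_1\cup K_1'}{K_2\cup K_2'}$, the basis axioms hold and the topology generated by the cylinders is the product topology (cf.\ \cite[Definition 42.1]{Wil70}). This proves the first assertion.

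Next I would read off the topological properties from the product description. Each factor $\{0,1\}$ is a finite discrete space, hence compact and Hausdorff, so $\prod_{\lambda\in\Lambda}\{0,1\}$ is compact by Tychonoff's theorem (the index set is countable, so a diagonal argument also suffices) and Hausdorff as a product of Hausdorff spaces. For second countability it is enough to note that a countable set has only countably many finite subsets, so there are only countably many cylinders $\Fell{}{K_1}{K_2}$; the basis of cylinders is therefore countable.

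Finally, for the ``moreover'' clause: for each $\mu\in\Lambda$ the set $\Fellin{}{\mu}=\{x\in\powerset{\Lambda}\mid\mu\in x\}$ is a cylinder, and so is its complement $\Fell{}{\emptyset}{\{\mu\}}=\{x\in\powerset{\Lambda}\mid\mu\notin x\}$; hence $\Fellin{}{\mu}$ is clopen. Therefore
\[
    \Fell{}{K_1}{K_2}
    =\bigcap_{\mu\in K_1}\Fellin{}{\mu}\;\cap\;\bigcap_{\nu\in K_2}\bigl(\powerset{\Lambda}\setminus\Fellin{}{\nu}\bigr)
\]
is a finite intersection of clopen sets, so it is closed; being a closed subset of the compact space $\powerset{\Lambda}$, it is compact. (Alternatively, $\Fell{}{K_1}{K_2}$ is visibly homeomorphic to $\prod_{\lambda\in\Lambda\setminus(K_1\cup K_2)}\{0,1\}$, so Tychonoff applies directly.) I do not anticipate a real obstacle here; the only care needed is the bookkeeping in the coordinate identification and the handling of the degenerate empty cylinder, both routine.
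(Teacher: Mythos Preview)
Your proof is correct and follows essentially the same approach as the paper: identify $\powerset{\Lambda}$ with the product $\prod_{\lambda\in\Lambda}\{0,1\}$, invoke Tychonoff and the Hausdorff-product fact, count finite subsets for second countability, and show each cylinder is closed (hence compact) by observing that $\powerset{\Lambda}\setminus\Fellin{}{\mu}=\Fell{}{\emptyset}{\{\mu\}}$ so that a general cylinder is a finite intersection of closed sets. Your write-up is simply more explicit about the basis axioms and the degenerate empty cylinder, and offers an alternative Tychonoff argument for the moreover clause, but nothing substantive differs.
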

\begin{proof} 
    The collection of cylinders is a basis as it identifies with a standard basis for the product topology on $\prod_{\lambda \in \Lambda} \{0,1\}$, which is a countable collection because $\Lambda$ is countable, so $\powerset{\Lambda}$ is second-countable.
    Tychonoff's theorem implies $\powerset{\Lambda}$ is compact, and products of Hausdorff spaces are Hausdorff.
    For all $\mu \in \Lambda$, $\powerset{\Lambda} \setminus \Fellin{}{\mu} = \Fell{}{\emptyset}{\{\mu\}}$, so $\Fellin{}{\mu}$ is closed.
    Thus, for \textit{finite} $K_1, K_2 \subseteq \Lambda$, $\Fell{}{K_1}{K_2}$ can be written as a finite intersection of closed sets, so $\Fell{}{K_1}{K_2}$ is closed and hence compact.
\end{proof}
    
Since $\powerset{\Lambda}$ is second-countable, closed sets and continuous functions are characterised as in \cite[Corollary 10.5(b)--(c)]{Wil70}.

\begin{lemma}\label{lem:closed_and_continuous}
    Let $\Lambda$ be a countable set.
    Suppose $X$ and $Y$ are subspaces of $\powerset{\Lambda}$.
    Then:
    \begin{enumerate}
        \item $C \subseteq X$ is closed if and only if $x_n \to x$ and $(x_n) \subseteq C$ implies $x \in C$; and
        \item $f \colon X \to Y$ is continuous if and only if $x_n \to x$ in $X$ implies $f(x_n) \to f(x)$ in $Y$.
    \end{enumerate}
\end{lemma}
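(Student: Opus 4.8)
The plan is to deduce both statements from the fact that first-countable spaces are sequential, exactly in the spirit of the cited \cite[Corollary 10.5(b)--(c)]{Wil70}. First I would note that $\powerset{\Lambda}$ is second-countable by \cref{lem:compact_Hausdorff_power_set}, hence first-countable, and that first-countability passes to subspaces; so $X$ and $Y$ are both first-countable. Since the statement is short, I would then give the argument directly rather than merely quoting Willard.

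For the ``only if'' directions, which hold in an arbitrary topological space and need no countability, I would argue as follows. If $C \subseteq X$ is closed, $(x_n) \subseteq C$ and $x_n \to x$ in $X$, then $x$ cannot lie in the open set $X \setminus C$, since that set is a neighbourhood of $x$ containing none of the $x_n$, contradicting convergence; hence $x \in C$. Likewise, if $f \colon X \to Y$ is continuous and $x_n \to x$ in $X$, then for every open $V \ni f(x)$ in $Y$ the preimage $f^{-1}(V)$ is an open neighbourhood of $x$, so it contains $x_n$ for all large $n$, i.e. $f(x_n) \in V$ eventually; thus $f(x_n) \to f(x)$.

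For the ``if'' directions I would use the first-countability of $X$: fix a point $x \in X$ and a decreasing neighbourhood basis $(U_n)$ at $x$ in $X$, obtained by intersecting a countable neighbourhood basis. For (a), suppose $C$ satisfies the stated sequential condition but is not closed in $X$; pick $x$ in the closure of $C$ in $X$ with $x \notin C$, and choose $x_n \in U_n \cap C$ for each $n$. Then $x_n \to x$ and $(x_n) \subseteq C$, forcing $x \in C$, a contradiction. For (b), suppose $f$ is sequentially continuous but fails to be continuous at some $x \in X$; then there is an open $V \ni f(x)$ in $Y$ with $f^{-1}(V)$ not a neighbourhood of $x$, so $U_n \not\subseteq f^{-1}(V)$ for every $n$, and choosing $x_n \in U_n \setminus f^{-1}(V)$ yields $x_n \to x$ while $f(x_n) \notin V$ for all $n$, so $f(x_n) \not\to f(x)$, contradicting sequential continuity.

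There is no genuine obstacle here; the only points worth stating carefully are that the reverse implications use only the first-countability of the domain $X$ (the codomain $Y$ is irrelevant), which is why it suffices to know $X$ is a subspace of the second-countable space $\powerset{\Lambda}$, and that all convergence and all neighbourhood bases are taken inside the subspace $X$.
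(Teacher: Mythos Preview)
Your proposal is correct and follows exactly the approach indicated by the paper: the paper does not give a proof but simply cites \cite[Corollary 10.5(b)--(c)]{Wil70}, noting that $\powerset{\Lambda}$ is second-countable; your argument is precisely the standard first-countability proof underlying that citation.
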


Since $\powerset{\Lambda}$ is second-countable, we also get from Urysohn's metrization theorem that any subspace of $\powerset{\Lambda}$ is metrizable, which is a sufficient condition for compactness to be equivalent to \define{sequential compactness} (i.e. every sequence has a convergent subsequence) as in \cite[Theorem 28.2]{Mun00}. 
Thus:

\begin{lemma}\label{lem:sequential_compactness}
    Let $\Lambda$ be a countable set. 
    Every subspace of $\powerset{\Lambda}$ is compact if and only if it is sequentially compact.
\end{lemma}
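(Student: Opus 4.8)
The plan is to reduce the statement to the standard fact that, in a metrizable space, compactness coincides with sequential compactness, so the whole argument is a chain of appeals to results already recorded. First I would invoke \cref{lem:compact_Hausdorff_power_set} to recall that $\powerset{\Lambda}$, with the cylinder topology, is compact, Hausdorff, and second-countable. Being compact and Hausdorff, $\powerset{\Lambda}$ is normal, hence regular; combined with the existence of a countable basis, Urysohn's metrization theorem yields that $\powerset{\Lambda}$ is metrizable.

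Next I would use that metrizability is hereditary: every subspace $X$ of $\powerset{\Lambda}$ is itself metrizable, since one may simply restrict a compatible metric. Finally, for a metrizable space, compactness is equivalent to sequential compactness (i.e. every sequence has a convergent subsequence) by \cite[Theorem 28.2]{Mun00}; applying this to $X$ gives the desired equivalence, and since $X$ was an arbitrary subspace, the lemma follows.

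There is essentially no obstacle here. The only point that deserves a moment's care is checking the hypotheses of Urysohn's metrization theorem, namely regularity together with a countable basis; the countable basis is \cref{lem:compact_Hausdorff_power_set}, and regularity is immediate once one observes that a compact Hausdorff space is normal. Everything else is a direct citation, and indeed the argument has already been sketched in the discussion preceding the statement, so the formal proof can be kept to a line or two.
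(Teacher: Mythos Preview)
Your proposal is correct and matches the paper's own argument essentially verbatim: the paper invokes Urysohn's metrization theorem (via second-countability and the compact Hausdorff property from \cref{lem:compact_Hausdorff_power_set}) to conclude that every subspace of $\powerset{\Lambda}$ is metrizable, and then cites \cite[Theorem 28.2]{Mun00} for the equivalence of compactness and sequential compactness. There is nothing to add.
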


If $(x_n)$ is a sequence in $\powerset{\Lambda}$ and $\lambda \in \Lambda$, we say $\lambda$ is \define{eventually inside (resp. outside)}\label{def:eventually_inside_outside} $(x_n)$ if there is $N \in \N$ such that $\lambda \in x_n$ (resp. $\lambda \notin x_n$) for all $n \geq N$.
    
\begin{lemma}\label{lem:convergence}
    Let $\Lambda$ be a countable set.
    A sequence $(x_n)$ converges to $x$ in $\powerset{\Lambda}$ if and only if, for all $\lambda \in \Lambda$:
    \begin{enumerate}
        \item[\namedlabel{Cin}{(in)}] if $\lambda \in x$, $\lambda$ is eventually inside $(x_n)$; and 
        \item[\namedlabel{Cout}{(out)}] if $\lambda \notin x$, $\lambda$ is eventually outside $(x_n)$.
    \end{enumerate}
\end{lemma}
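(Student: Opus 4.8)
The plan is to unwind the definition of convergence in the pointwise topology using the cylinder basis established in \cref{lem:compact_Hausdorff_power_set}. Recall that $x_n \to x$ means that every basic open set containing $x$ contains $x_n$ for all sufficiently large $n$, and that the basic open sets are the cylinders $\Fell{}{K_1}{K_2}$ with $K_1, K_2 \subseteq \Lambda$ finite, where $x \in \Fell{}{K_1}{K_2}$ exactly when $K_1 \subseteq x$ and $K_2 \cap x = \emptyset$. So a neighbourhood basis at $x$ is given by the cylinders $\Fell{}{K_1}{K_2}$ with $K_1 \subseteq x$ and $K_2 \cap x = \emptyset$.

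For the forward implication, suppose $x_n \to x$ and fix $\lambda \in \Lambda$. If $\lambda \in x$, then $\Fellin{}{\lambda}$ is a basic open neighbourhood of $x$, so there is $N \in \N$ with $x_n \in \Fellin{}{\lambda}$, that is, $\lambda \in x_n$, for all $n \geq N$; this is \ref{Cin}. If $\lambda \notin x$, then $\Fell{}{\emptyset}{\{\lambda\}} = \powerset{\Lambda} \setminus \Fellin{}{\lambda}$ is a basic open neighbourhood of $x$, so eventually $x_n \in \Fell{}{\emptyset}{\{\lambda\}}$, i.e.\ $\lambda \notin x_n$; this is \ref{Cout}.

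For the converse, assume \ref{Cin} and \ref{Cout} hold for every $\lambda$, and let $\Fell{}{K_1}{K_2}$ be a basic open neighbourhood of $x$, so $K_1 \subseteq x$ and $K_2 \cap x = \emptyset$. For each of the finitely many $\lambda \in K_1$, condition \ref{Cin} supplies $N_\lambda \in \N$ with $\lambda \in x_n$ for all $n \geq N_\lambda$; for each of the finitely many $\lambda \in K_2$, condition \ref{Cout} supplies $N_\lambda \in \N$ with $\lambda \notin x_n$ for all $n \geq N_\lambda$. Setting $N \coloneqq \max_{\lambda \in K_1 \cup K_2} N_\lambda$ — a maximum over a finite set, which is exactly where finiteness of $K_1$ and $K_2$ is used — we get $K_1 \subseteq x_n$ and $K_2 \cap x_n = \emptyset$, hence $x_n \in \Fell{}{K_1}{K_2}$, for all $n \geq N$. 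Since the cylinders form a basis, this shows $x_n \to x$.

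I do not expect a genuine obstacle here: the argument is a routine translation between the cylinder basis and pointwise behaviour, and the only point deserving a moment's care is that combining finitely many threshold indices into a single $N$ relies on $K_1$ and $K_2$ being finite. Alternatively, one could shortcut the whole proof by invoking that convergence in the product topology $\prod_{\lambda \in \Lambda}\{0,1\}$ is coordinatewise and that a sequence in the discrete space $\{0,1\}$ converges if and only if it is eventually constant, which is precisely conditions \ref{Cin} and \ref{Cout} read coordinate by coordinate; I would likely present the direct cylinder argument for self-containedness.
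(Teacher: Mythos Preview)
Your proof is correct and follows essentially the same approach as the paper: both directions use the cylinder basis, with the forward implication tested on the singleton cylinders $\Fellin{}{\lambda}$ and $\Fell{}{\emptyset}{\{\lambda\}}$, and the converse obtained by taking the maximum of the finitely many thresholds coming from the elements of $K_1$ and $K_2$. The additional remark about coordinatewise convergence in the product space is a valid alternative but is not used in the paper.
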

\begin{proof}
    Suppose $x_n \to x \in \powerset{\Lambda}$ and fix $\lambda \in \Lambda$.
    If $\lambda \notin x$, then $x \in \Fell{}{\emptyset}{\{\lambda\}}$, so eventually $x_n \in \Fell{}{\emptyset}{\{\lambda\}}$.
    Thus, \ref{Cout} holds.
    A similar argument works for \ref{Cin}.
    
    For the other direction, consider a basic open set $\Fell{}{K_1}{K_2}$ containing $x$, where $K_1, K_2 \subseteq \Lambda$ are finite.
    Hence, $K_1 \subseteq x \subseteq \Lambda \setminus K_2$. 
    Let $N$ be the maximum of the indices given by applying \ref{Cin} to each element of $K_1$ and \ref{Cout} to each element of $K_2$.
    Then, for all $n \geq N$, $x_n \in \Fell{}{K_1}{K_2}$, so $x_n \to x$.
\end{proof}

\subsection{Higher-rank graphs}

Higher-rank graphs were introduced in \cite{KP00} as $k$-graphs and generalised to $P$-graphs in \cite{BSV13}.
All the examples in this paper are $k$-graphs. 
That said, we prove our results for $P$-graphs, which generalise both $k$-graphs and the hybrid graphs of \cite{Spi07}. 
The generalisation is worthwhile: Any purely infinite simple C*-algebra is stably isomorphic to the C*-algebra of a $P$-graph \cite{Spi07, BSV13}.

We follow \cite{BSV13, aHNSY21} for preliminaries on weakly quasi-lattice ordered groups and $P$-graphs.
Let $Q$ be a discrete group, and let $P$ be a semigroup with unit $e$ such that $P \cap P^{-1} = \{e\}$. 
For each $p, r \in P$, define $p \leq r$ if $pq = r$ for some $q \in P$. 
Following \cite{aHNSY21}, $(Q, P)$ is a \define{weakly quasi-lattice ordered group} if, whenever two elements of $P$ have a common upper bound with respect to $\leq$, there is a least common upper bound.
For a weakly quasi-lattice ordered group $(Q, P)$, a (discrete) \define{$P$-graph} consists of a countable small category $(\Lambda, \composablepairs{\Lambda}, \unitspace{\Lambda}, \r, \s)$ together with a functor $\d \colon \Lambda \to P$ satisfying the \define{unique factorisation property}: for every $\lambda \in \Lambda$ and $p, q \in P$ with $\d(\lambda) = pq$, there are unique elements $\mu, \nu \in \Lambda$ such that $\lambda = \mu\nu$, $\d(\mu) = p$ and $\d(\nu) = q$.

We state some basic facts and notations about $P$-graphs.
It follows from the unique factorisation property that $\Lambda$ is \define{left cancellative} (i.e. $\mu\nu = \mu\kappa$ implies $\nu = \kappa$), \define{right cancellative} (i.e. $\mu\nu = \kappa\nu$ implies $\mu = \kappa$), and has \define{no inverses} (i.e. $\mu\nu = \s(\nu)$ implies $\mu = \nu = \s(\nu)$). 
Hence, any $P$-graph is a \define{category of paths} (i.e. a left and right cancellative small category having no inverses) as in \cite[Example 2.2(6)]{Spi14}.
For each $p \in P$, we write $\Lambda^p \coloneqq \d^{-1}(p)$, and we have $\unitspace{\Lambda} = \Lambda^e$.
For each $\mu \in \Lambda$ and $E \subseteq \Lambda$, we write $\lambda E \coloneqq \setof{\lambda\mu}{\mu\in E \text{ and } \s(\lambda) = \r(\mu)}$.
For each $\mu, \lambda \in \Lambda$, define $\mu \preceq \lambda$ if $\mu\nu = \lambda$ for some $\nu \in \Lambda$. 
The relation $\preceq$ is the same as that of \cite[Definition 2.5]{Spi14} for categories of paths, so $\preceq$ is a partial order. 
Moreover, $\preceq$ is \define{left invariant} (i.e. $\mu\nu \preceq \mu\kappa$ implies $\nu \preceq \kappa$) because of left cancellation.

\begin{example}
    The $k$-graphs of \cite{KP00} are the $P$-graphs where $(Q, P) = (\mathbb{Z}^k, \N^k)$ for some $k \in \mathbb{Z}^+$.
\end{example}

We say $\Lambda$ is \define{finitely aligned} if, for all $\mu, \nu \in \Lambda$, there is a finite $J \subseteq \Lambda$ such that $\mu\Lambda \cap \nu\Lambda = \bigcup_{\lambda \in J} \lambda\Lambda$ as in \cite[Page 729]{Spi12}. 
Our first contribution is to give a local treatment of finite alignment in \cref{sec:finite_alignment}, during which we give examples of $k$-graphs.

\subsection{Groupoids}

We refer to \cite[Page 3--4]{Wil19} for preliminaries on groupoids.
A \define{groupoid} consists of a set $G$ and a subset $\composablepairs{G} \subseteq G \times G$ together with a map $(g,h) \mapsto gh$ from $\composablepairs{G}$ to $G$ (called \define{composition}) and a map $g \mapsto g^{-1}$ from $G$ to $G$ (called \define{inversion}) such that:
\begin{enumerate}
    \item if $(f, g),(g, h) \in \composablepairs{G}$, 
    then $(f, gh),(fg, h) \in \composablepairs{G}$ and 
    \(f(gh) = (fg)h \eqqcolon fgh\);
    \item for all $g \in G$, $(g^{-1})^{-1} = g$; and
    \item for all $g \in G$, $(g^{-1},g) \in \composablepairs{G}$ and $(g,h) \in \composablepairs{G}$ implies $g^{-1}(gh) = h$ and $(gh)h^{-1} = g$.
\end{enumerate}

The set $\unitspace{G} \coloneqq \setof{x \in G}{x = xx = x^{-1}}$ is called the \define{unit space} of $G$.
There are \define{range} and \define{source} maps from $G$ to $\unitspace{G}$ given by $\r(g) \coloneqq gg^{-1}$ and $\s(g) \coloneqq g^{-1}g \in \unitspace{G}$, for each $g \in G$, respectively.
We have $\r(G) = \s(G) = \unitspace{G}$, and $(g, h) \in \composablepairs{G}$ if and only if $\s(g) = \r(h)$.

We define topological, locally compact, \'etale and ample groupoids as in \cite{Ste10}, though they were first studied by Renault \cite{Ren80} and then Paterson \cite{Pat99}.
Let $G$ be a groupoid endowed with a (\textit{not necessarily Hausdorff}) topology.
The set $\composablepairs{G}$ is endowed with the subspace topology from the product topology on $G \times G$.
We say $G$ is \define{topological} if composition and inversion are continuous.
A \define{locally compact groupoid} is a topological groupoid $G$ such that $G$ is locally compact and the unit space $\unitspace{G}$ is both locally compact \textit{and Hausdorff} in the subspace topology.
A locally compact groupoid $G$ is \define{\'etale} if $\s \colon G \to \unitspace{G}$ is a local homeomorphism.
An \'etale groupoid $G$ is \define{ample} if $\unitspace{G}$ has a basis of compact open sets.

If $G$ is a groupoid, we call $H \subseteq G$ a \define{subgroupoid} if $H$ is a groupoid under the inherited operations.
Given $U \subseteq \unitspace{G}$, the \define{reduction} of $G$ to $U$ is $\reduction{G}{U} \coloneqq \s^{-1}(U) \cap \r^{-1}(U)$, which is a subgroupoid of $G$ under the inherited operations.
Following \cite{SSW20}, we say $U \subseteq \unitspace{G}$ is \define{invariant} if $\r(\s^{-1}(U)) \subseteq U$, in which case $\reduction{G}{U} = \s^{-1}(U)$.
If $G$ is \'etale and $U \subseteq \unitspace{G}$ is closed and invariant, then $\reduction{G}{U}$ is an \'etale closed subgroupoid of $G$ (see \cite[Page 96]{SSW20}).

\subsection{Semigroup actions}
    \label{sec:semigroup_action_prelims}

The following definitions come from \cite[\S 5]{RW17}.
Let $X$ be a set, and let $P$ be a subsemigroup of a group $Q$ with unit $e$.
Suppose $X * P$ is a subset of $X \times P$ and $T \colon X * P \to X$ is a function sending each $(x,m) \in X * P$ to $T((x, m)) \eqqcolon \rpa{x}{m} \in X$ and satisfying:
\begin{enumerate}
    \item[\namedlabel{S1}{(S1)}] for all $x \in X$, $(x,e) \in X * P$ and $\rpa{x}{e} = x$; and
    \item[\namedlabel{S2}{(S2)}] for all $(x,m,n) \in X \times P \times P$, $(x,mn) \in X * P$ if and only if $(x,m) \in X * P$ and $(\rpa{x}{m},n) \in X * P$, in which case $\rpa{(\rpa{x}{m})}{n} = \rpa{x}{(mn)}$.
\end{enumerate}
We call the triple $(X,P,T)$ a \define{semigroup action (of $P$ on $X$)}\label{def:semigroup_action}.
For each $m \in P$, we write
\[
    \rpadom{m} \coloneqq \setof{x \in X}{(x,m) \in X * P}
    \text{ and }
    \rpacod{m} \coloneqq \setof{\rpa{x}{m}}{x \in \rpadom{m}},
\]
and we define $\rpamap{m} \colon \rpadom{m} \to \rpacod{m}$ by $\rpamap{m}(x) = \rpa{x}{m}$, for each $x \in \rpadom{m}$. 
We say $(X,P,T)$ is \define{directed}\label{def:semigroup_action_directed} if, for all $m,n \in P$, if $\rpadom{m} \cap \rpadom{n} \neq \emptyset$, then there is $l \in P$ such that $m,n \leq l$ and $\rpadom{m} \cap \rpadom{n} \subseteq \rpadom{l}$, in which case $\rpadom{m} \cap \rpadom{n} = \rpadom{l}$ as per the note following \cite[Definition 5.2]{RW17}.
Recall that a \define{local homeomorphism}\label{def:local_homeomorphism} from a space $X$ to a space $Y$ is a continuous map $f \colon X \to Y$ such that every $x \in X$ has an open neighbourhood $U$ such that $f(U)$ is open in $Y$ and the restriction $f|_U \colon U \to f(U)$ of $f$ is a homeomorphism \cite[Page 68]{Wil70}.
A semigroup action $(X,P,T)$ is \define{locally compact}\label{def:semigroup_action_locally_compact} if $X$ is a locally compact Hausdorff space and, for all $m \in P$, $\rpadom{m}$ and $\rpacod{m}$ are open in $X$ and $\rpamap{m} \colon \rpadom{m} \to \rpacod{m}$ is a local homeomorphism.

\subsection{Semidirect product groupoids}
    \label{sec:semidirect_product_groupoid_prelims}

Semidirect product groupoids are a generalisation of Deaconu--Renault groupoids \cite{Dea95, Ren00}.
In \cite{RW17}, Renault and Williams associate a semidirect product groupoid $\sdpg{X}{P}{T}$ to each locally compact directed semigroup action $(X,P,T)$ as follows.
Let $\sdpg{X}{P}{T}$ be the set of $(x,q,y) \in X \times Q \times X$ for which there are $m,n \in P$ satisfying $q = mn^{-1}$, $x \in \rpadom{m}$, $y \in \rpadom{n}$ and $\rpa{x}{m} = \rpa{y}{n}$. 
Let $\composablepairs{\sdpg{X}{P}{T}}$ be the set of pairs $((x,q,y),(w,r,z)) \in \sdpg{X}{P}{T} \times \sdpg{X}{P}{T}$ with $y = w$. 
Then, $\sdpg{X}{P}{T}$ is a groupoid with composition 
\(
    (x,q,y)(y,r,z) \coloneqq (x,qr,z)
\)
and inversion 
\(
    (x,q,y)^{-1} \coloneqq (y,q^{-1},x),
\)
called the \define{semidirect product groupoid} of $(X,P,T)$.
The collection of 
\(
    \sdpgcylinder{U}{m}{n}{V} \coloneqq 
    \setof{
        (x,mn^{-1},y) \in \sdpg{X}{P}{T}
    }{
        x \in U, y \in V \text{ and } \rpa{x}{m} = \rpa{y}{n}
    },
\)
where $m, n \in P$ and $U, V \subseteq X$ are open, is a basis for a topology on $\sdpg{X}{P}{T}$ such that $\sdpg{X}{P}{T}$ is \'etale and Hausdorff, and $x \mapsto (x,e,x)$ is a homeomorphism from $X$ to $\unitspace{\sdpg{X}{P}{T}}$.
It follows from the definition of a basis that:

\begin{lemma}
    \label{lem:sdpg_basic_basis}
    If $\mathcal{B}$ is a basis for the topology on $X$, then the collection of $\sdpgcylinder{B}{m}{n}{C}$, where $m,n \in P$ and $B,C \in \mathcal{B}$, is a basis for the topology on $\sdpg{X}{P}{T}$.
\end{lemma}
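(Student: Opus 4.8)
The plan is to argue directly from the definition of a basis: a family of open sets is a basis for a space precisely when every point of every open set has a member of the family as a neighbourhood contained in that open set. Since the cylinders $\sdpgcylinder{U}{m}{n}{V}$ with $m,n \in P$ and $U,V \subseteq X$ open are already known to form a basis for $\sdpg{X}{P}{T}$, it is enough to ``refine'' each such cylinder, from inside, by cylinders of the proposed subcollection.

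First I would record that each $\sdpgcylinder{B}{m}{n}{C}$ with $B,C \in \mathcal{B}$ is open: members of $\mathcal{B}$ are open in $X$, and every $\sdpgcylinder{U}{m}{n}{V}$ with $U,V$ open and $m,n \in P$ is a basic (hence open) subset of $\sdpg{X}{P}{T}$. Next, fix an open set $W \subseteq \sdpg{X}{P}{T}$ and a point $g \in W$. As the cylinders over open sets form a basis, there are $m,n \in P$ and open $U,V \subseteq X$ with $g \in \sdpgcylinder{U}{m}{n}{V} \subseteq W$. Writing $g = (x,mn^{-1},y)$ — the only representation of $g$ as an element of that cylinder, by the definition of $\sdpg{X}{P}{T}$ as a subset of $X \times Q \times X$ — we have $x \in U$, $y \in V$ and $\rpa{x}{m} = \rpa{y}{n}$. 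Since $\mathcal{B}$ is a basis for the topology on $X$, choose $B,C \in \mathcal{B}$ with $x \in B \subseteq U$ and $y \in C \subseteq V$. Then $g \in \sdpgcylinder{B}{m}{n}{C}$, because all the defining conditions (membership in $\sdpg{X}{P}{T}$, $x \in B$, $y \in C$, and $\rpa{x}{m} = \rpa{y}{n}$) hold, and $\sdpgcylinder{B}{m}{n}{C} \subseteq \sdpgcylinder{U}{m}{n}{V} \subseteq W$ because $B \subseteq U$, $C \subseteq V$, and the remaining conditions defining the two cylinders coincide. This exhibits the required neighbourhood and finishes the argument.

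I do not expect a genuine obstacle here; the statement is a routine consequence of the two definitions. The only point that deserves a moment's care is that one may keep the \emph{same} $m$ and $n$ when passing from $(U,V)$ to $(B,C)$: this works because the equation $\rpa{x}{m} = \rpa{y}{n}$ constrains only the endpoints $x,y$ of $g$ and not the open sets $U,V$, so shrinking $U,V$ to basic sets containing $x,y$ preserves it. It is also worth making explicit, as above, that membership of $g$ in a cylinder $\sdpgcylinder{U}{m}{n}{V}$ forces the first and third coordinates of $g$ to be the $x$ and $y$ appearing in those conditions, so no ambiguity arises in choosing $B$ and $C$.
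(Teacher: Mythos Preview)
Your proof is correct and matches the paper's approach exactly: the paper simply states that the lemma ``follows from the definition of a basis'' without giving details, and your argument spells out precisely that routine verification. There is nothing to add.
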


\section{A local treatment of finite alignment}
    \label{sec:finite_alignment}

We give a local treatment of finite alignment. 
We find each $P$-graph $\Lambda$ contains a subset $\FA{\Lambda}$ that is a \textit{right constellation} in the sense of \cite{GH10,GS17}---a sort of ``one-sided category''. 
Moreover, $\FAr{\Lambda} \coloneqq \FA{\Lambda} \cup \r(\FA{\Lambda})$ is a subcategory of $\Lambda$ such that $(\FAr{\Lambda}, \Lambda)$ is a \textit{finitely aligned} relative category of paths in the sense of \cite{Spi14}. 
We use $\FA{\Lambda}$ in \cref{sec:path_space} to define a locally compact path space $\PS{\Lambda}$ for each $P$-graph $\Lambda$ with $\FA{\Lambda} \neq \emptyset$.

\begin{definition}\label{def:FA}
    Let $(Q,P)$ be a weakly quasi-lattice ordered group, and let $\Lambda$ be a $P$-graph.
    For each $\mu, \nu \in \Lambda$, we say $\Lambda$ is \define{finitely aligned at $(\mu, \nu)$} if there is a finite (possibly empty) subset $J$ of $\Lambda$ such that $\mu\Lambda \cap \nu\Lambda = \bigcup_{\lambda \in J}\lambda\Lambda$. 
    For each $\lambda \in \Lambda$, we say $\Lambda$ is \define{finitely aligned at $\lambda$} if $\Lambda$ is finitely aligned at $(\mu, \nu)$ for any $\mu \in \lambda\Lambda$ and $\nu \in \Lambda$. 
    Let $\FA{\Lambda}$ be the set of $\lambda \in \Lambda$ such that $\Lambda$ is finitely aligned at $\lambda$.
    We say $\Lambda$ is \define{finitely aligned} if $\FA{\Lambda} = \Lambda$, which coincides with the definition of finite alignment from \cite[Page 729]{Spi12}.
    If $\Lambda$ is not finitely aligned, we say $\Lambda$ is \define{nonfinitely aligned}, as in \cite{Spi14}.
\end{definition}

\begin{example}
    \label{ex:tg}
    We describe $k$-graphs in terms of their $1$-skeletons as per \cite{RSY03}.
    Consider the $2$-graph $\tg$ from \cite[Example 12.7]{Spi20}, which is given by the following $1$-skeleton with relations $\mu\beta_n = \lambda\alpha_n$, for each $n \in \N$.
    \begin{center}
        \begin{tikzcd}[column sep=1cm, row sep=1cm]
            \& t \\
            v \&\& u \\
            \& w
            \arrow["\mu"', dashed, from=1-2, to=2-1]
            \arrow["{\beta_n}", from=2-3, to=1-2]
            \arrow["{\alpha_n}"', dashed, from=2-3, to=3-2]
            \arrow["\lambda", from=3-2, to=2-1]
        \end{tikzcd}
    \end{center}

    \noindent The only pair at which $\tg$ is not finitely aligned is $(\lambda, \mu)$.
    Thus, $\FA{\tg} = \tg \setminus \{v, \lambda, \mu\} \neq \emptyset$.
\end{example}

Our path space $\PS{\Lambda}$ (see \cref{def:PS}) is empty if and only if $\FA{\Lambda} = \emptyset$ (\cref{rem:FA_empty_iff_PS_empty}), so we restrict our attention to $\Lambda$ for which $\FA{\Lambda} \neq \emptyset$. 
Note that Spielberg's techniques for the nonfinitely aligned setting \cite{Spi20} apply even when $\FA{\Lambda} = \emptyset$.
We recall the $2$-graph $\Yeends$ from \cite{Yee07}, which satisfies $\FA{\Yeends} \neq \emptyset$ and motivates our definition of path space in \cref{sec:path_space}.

\begin{example}
    \label{ex:Yee}
    Consider the $2$-graph $\Yeends$ from \cite[Remark 3.7]{Yee07}, which is given by the following $1$-skeleton with relations $\mu_i\beta_{i,j} = \lambda\alpha_{i,j}$, for each $i,j \in \N$.

    \begin{center}
        \begin{tikzcd}[column sep=1cm, row sep=1cm]
            \& {t_i} \\
            v \&\& {u_{i,1}} \&\& {u_{i,2}} \&\& {u_{i,3}} \& {{}} \\
            \& w
            \arrow["{{\mu_i}}"', dashed, from=1-2, to=2-1]
            \arrow["{{\beta_{i,1}}}", from=2-3, to=1-2]
            \arrow["{{\alpha_{i,1}}}"', dashed, from=2-3, to=3-2]
            \arrow["{{\beta_{i,2}}}", from=2-5, to=1-2]
            \arrow["{{\alpha_{i,2}}}"', dashed, from=2-5, to=3-2]
            \arrow["{{\beta_{i,3}}}"', from=2-7, to=1-2]
            \arrow[dotted, no head, from=2-7, to=2-8]
            \arrow["{{\alpha_{i,3}}}", dashed, from=2-7, to=3-2]
            \arrow["\lambda", from=3-2, to=2-1]
        \end{tikzcd}
    \end{center}

    \noindent The importance of $\Yeends$ is that the usual path space from the finitely aligned setting is not locally compact \cite[Remark 3.7]{Yee07}, and hence does not give rise to the desired groupoids. 
    In \cref{sec:path_space}, we give a novel definition of a \textit{locally compact} path space for any $P$-graph $\Lambda$ with $\FA{\Lambda} \neq \emptyset$.
\end{example}

\subsection{Properties of the finitely aligned part of a higher-rank graph}

We collect some basic properties of $\FA{\Lambda}$.

\begin{lemma}
    \label{lem:finite_alignment}
    \hypotheses
    \begin{enumerate}
        \item\label{it:finite_alignment_right_ideal} The set $\FA{\Lambda}$ is a right ideal in $\Lambda$.
        \item\label{it:finite_alignment_closed_under_final_segments} If $\mu\delta \in \FA{\Lambda}$, then $\delta \in \FA{\Lambda}$.
        \item\label{it:finite_alignment_closed_under_source} The set $\FA{\Lambda}$ is closed under the source map $\s$.
        \item\label{it:finite_alignment_not_closed_under_range} The set $\FA{\Lambda}$ is not necessarily closed under the range map $\r$.
        \item\label{it:finite_alignment_propagates} If $\mu \in \FA{\Lambda}$, then $\s(\mu)\Lambda \subseteq \FA{\Lambda}$.
        \item\label{it:finitely_aligned} If $\mu, \nu \in \FA{\Lambda}$, then there is a finite $J \subseteq \FA{\Lambda}$ such that $\mu\Lambda \cap \nu\Lambda = \bigcup_{\lambda \in J}\lambda\Lambda$.
    \end{enumerate}
\end{lemma}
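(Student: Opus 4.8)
The plan is to establish the six parts in the order listed, bootstrapping the later ones off the earlier ones and deferring the single negative claim \ref{it:finite_alignment_not_closed_under_range} to \cref{ex:tg}. Part \ref{it:finite_alignment_right_ideal} I would read straight off \cref{def:FA}: if $\lambda \in \FA{\Lambda}$ and $\lambda\kappa$ is defined, then $\lambda\kappa\Lambda \subseteq \lambda\Lambda$, so for every $\mu \in \lambda\kappa\Lambda$ and $\nu \in \Lambda$ we already have $\mu \in \lambda\Lambda$, whence $\Lambda$ is finitely aligned at $(\mu,\nu)$; this is exactly the assertion that $\lambda\kappa \in \FA{\Lambda}$.

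The heart of the argument is \ref{it:finite_alignment_closed_under_final_segments}, and I expect this to be the only step requiring genuine care. Assume $\mu\delta \in \FA{\Lambda}$, and fix $\alpha \in \delta\Lambda$ and $\nu \in \Lambda$; the goal is a finite $J \subseteq \Lambda$ with $\alpha\Lambda \cap \nu\Lambda = \bigcup_{\lambda \in J}\lambda\Lambda$. If $\r(\nu) \neq \r(\alpha)$ then $\alpha\Lambda \cap \nu\Lambda = \emptyset$ and $J = \emptyset$ suffices, so suppose $\r(\nu) = \r(\alpha) = \s(\mu)$. Left cancellation gives the identity $\mu(\alpha\Lambda \cap \nu\Lambda) = \mu\alpha\Lambda \cap \mu\nu\Lambda$. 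Since $\mu\alpha \in \mu\delta\Lambda$ and $\mu\delta \in \FA{\Lambda}$, there is a finite $K \subseteq \Lambda$ with $\mu\alpha\Lambda \cap \mu\nu\Lambda = \bigcup_{\kappa \in K}\kappa\Lambda$; each $\kappa \in K$ satisfies $\kappa \in \kappa\Lambda \subseteq \mu\Lambda$, so by unique factorisation (and left cancellation) $\kappa = \mu\kappa'$ for a unique $\kappa'$. Put $J \coloneqq \{\kappa' \mid \kappa \in K\}$. Then $\mu\big(\bigcup_{\lambda\in J}\lambda\Lambda\big) = \bigcup_{\kappa\in K}\kappa\Lambda = \mu(\alpha\Lambda \cap \nu\Lambda)$, and since both $\alpha\Lambda \cap \nu\Lambda$ and $\bigcup_{\lambda\in J}\lambda\Lambda$ consist of paths with range $\s(\mu)$, the set-level left cancellation $\mu S = \mu T \Rightarrow S = T$ yields $\alpha\Lambda \cap \nu\Lambda = \bigcup_{\lambda\in J}\lambda\Lambda$. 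As $\alpha,\nu$ were arbitrary, $\delta \in \FA{\Lambda}$. The delicate point is only to keep the two uses of left cancellation straight — on elements, to get the displayed identity, and on subsets of $\s(\mu)\Lambda$, to strip the leading $\mu$ — and to track the range conditions that make all the products well defined.

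The remaining parts follow quickly. For \ref{it:finite_alignment_closed_under_source}, I would apply \ref{it:finite_alignment_closed_under_final_segments} to the factorisation $\lambda = \lambda\,\s(\lambda)$ to get $\s(\lambda) \in \FA{\Lambda}$. Part \ref{it:finite_alignment_not_closed_under_range} is witnessed by \cref{ex:tg}, where $\mu\beta_n = \lambda\alpha_n$ lies in $\FA{\tg}$ but $\r(\mu\beta_n) = v \notin \FA{\tg}$. For \ref{it:finite_alignment_propagates}, given $\mu \in \FA{\Lambda}$ we have $\s(\mu) \in \FA{\Lambda}$ by \ref{it:finite_alignment_closed_under_source}, and every $\kappa \in \s(\mu)\Lambda$ equals $\s(\mu)\kappa$, which lies in $\FA{\Lambda}$ by \ref{it:finite_alignment_right_ideal}. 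Finally, for \ref{it:finitely_aligned}: since $\mu \in \FA{\Lambda}$ and $\mu \in \mu\Lambda$, \cref{def:FA} already furnishes a finite $J \subseteq \Lambda$ with $\mu\Lambda \cap \nu\Lambda = \bigcup_{\lambda\in J}\lambda\Lambda$; each $\lambda \in J$ has $\lambda \in \lambda\Lambda \subseteq \mu\Lambda$, hence $\lambda \in \FA{\Lambda}$ by \ref{it:finite_alignment_right_ideal}, so $J \subseteq \FA{\Lambda}$, as required.
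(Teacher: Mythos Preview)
Your proposal is correct and follows essentially the same approach as the paper's proof: the same direct reading of \cref{def:FA} for \ref{it:finite_alignment_right_ideal}, the same left-cancellation argument for \ref{it:finite_alignment_closed_under_final_segments} (the paper is slightly terser but uses the identical idea of passing to $\mu\mu'\Lambda \cap \mu\nu\Lambda$ and then stripping $\mu$), the same specialisation $\lambda = \lambda\s(\lambda)$ for \ref{it:finite_alignment_closed_under_source}, the same counter-example from \cref{ex:tg} for \ref{it:finite_alignment_not_closed_under_range}, and the same deductions for \ref{it:finite_alignment_propagates} and \ref{it:finitely_aligned}. The only cosmetic difference is that for \ref{it:finite_alignment_propagates} you route through \ref{it:finite_alignment_closed_under_source} then \ref{it:finite_alignment_right_ideal}, while the paper cites \ref{it:finite_alignment_right_ideal} and \ref{it:finite_alignment_closed_under_final_segments} directly (compose then strip, rather than strip then compose); these are equivalent.
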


\begin{proof}
    \ref{it:finite_alignment_right_ideal} 
    \quad Suppose $\lambda \in \FA{\Lambda}$ and $(\lambda, \lambda') \in \composablepairs{\Lambda}$.
    We show $\lambda\lambda' \in \FA{\Lambda}$.
    If $(\lambda\lambda', \lambda'') \in \composablepairs{\Lambda}$ and $\nu \in \Lambda$, then $\Lambda$ is finitely aligned at $(\lambda\lambda'\lambda'', \nu)$ because $\lambda\lambda'\lambda'' \in \lambda\Lambda$ and $\Lambda$ is finitely aligned at $\lambda$. 

    \ref{it:finite_alignment_closed_under_final_segments} 
    \quad Suppose $\mu' \in \delta\Lambda$ and $\nu \in \Lambda$.
    We show $\Lambda$ is finitely aligned at $(\mu', \nu)$.
    If $\r(\nu) \neq \r(\mu')$, then $\mu'\Lambda \cap \nu\Lambda = \emptyset$, so we are done.
    Otherwise, $\r(\nu) = \r(\mu') = \r(\delta) = \s(\mu)$, so $(\mu, \nu) \in \composablepairs{\Lambda}$.
    Because $\mu\mu' \in \mu\delta\Lambda$, $\Lambda$ is finitely aligned at $(\mu\mu', \mu\nu)$.
    That is, there is a finite $J \subseteq \Lambda$ such that $\mu\mu'\Lambda \cap \mu\nu\Lambda = \bigcup_{\lambda \in J}\lambda\Lambda$.
    It follows $J \subseteq \mu\Lambda$, and so $\mu'\Lambda \cap \nu\Lambda = \bigcup_{\kappa \text{ such that }\mu\kappa \in J} \kappa\Lambda$, so $\Lambda$ is finitely aligned at $(\mu', \nu)$.

    \ref{it:finite_alignment_closed_under_source} 
    \quad This is a special case of \ref{it:finite_alignment_closed_under_final_segments} because $\lambda = \lambda \s(\lambda)$, for all $\lambda \in \Lambda$.

    \ref{it:finite_alignment_not_closed_under_range} 
    \quad The $2$-graph $\tg$ from \cref{ex:tg} is a counter-example because $v = \r(\lambda\alpha_1) \notin \FA{\tg}$.

    \ref{it:finite_alignment_propagates} 
    \quad This follows from \ref{it:finite_alignment_right_ideal} and \ref{it:finite_alignment_closed_under_final_segments}.

    \ref{it:finitely_aligned} 
    \quad Since $\Lambda$ is finitely aligned at $(\mu, \nu)$, there is a finite $J \subseteq \Lambda$ such that $\mu\Lambda \cap \nu\Lambda = \bigcup_{\lambda \in J}\lambda\Lambda$. 
    Each $\lambda \in J$ is in $\mu\Lambda$, so \ref{it:finite_alignment_right_ideal} implies $\lambda \in \FA{\Lambda}$.
\end{proof}

\subsection{The finitely aligned part as a constellation}

We highlight from \cref{lem:finite_alignment} that $\FA{\Lambda}$ is closed under composition (by \cref{lem:finite_alignment}\ref{it:finite_alignment_right_ideal}), closed under $\s$ (by \cref{lem:finite_alignment}\ref{it:finite_alignment_closed_under_source}), and not necessarily closed under $\r$ (by \cref{lem:finite_alignment}\ref{it:finite_alignment_not_closed_under_range}).
Thus, $\FA{\Lambda}$ is a right constellation in the sense of \cite[Page 280]{GS17}---a sort of ``one-sided category''. Constellations were introduced in \cite{GH10}.

\begin{proposition}
    \label{prop:constellations}
    \hypotheses
    Then, $\FA{\Lambda}$ is a right constellation with respect to the restrictions of the composition and source maps in $\Lambda$.
\end{proposition}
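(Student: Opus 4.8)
The plan is to unwind the definition of a right constellation from \cite{GS17} and verify each of its axioms by transporting the corresponding fact from the ambient category $\Lambda$, using only the closure properties of $\FA{\Lambda}$ recorded in \cref{lem:finite_alignment}; no finite-alignment information (\cref{lem:finite_alignment}\ref{it:finitely_aligned}) is needed for this statement. The partial binary operation on $\FA{\Lambda}$ is the restriction of composition in $\Lambda$: a pair $(\mu,\nu)$ is declared composable in $\FA{\Lambda}$ exactly when $\s(\mu) = \r(\nu)$, i.e. when $(\mu,\nu) \in \composablepairs{\Lambda}$, and the product $\mu\nu$ then lies in $\FA{\Lambda}$ since $\mu\nu \in \mu\Lambda \subseteq \FA{\Lambda}$ by \cref{lem:finite_alignment}\ref{it:finite_alignment_right_ideal}; thus the set of composable pairs of $\FA{\Lambda}$ is inherited from $\Lambda$ and the operation is well defined. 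The unary operation is $\s$ restricted to $\FA{\Lambda}$, which takes values in $\FA{\Lambda}$ by \cref{lem:finite_alignment}\ref{it:finite_alignment_closed_under_source}. With these two pieces of structure in place, everything that follows is bookkeeping.

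First I would dispatch the source axioms, namely $\mu\s(\mu) = \mu$, $\s(\mu\nu) = \s(\nu)$ whenever $\mu\nu$ is defined, and $\s(\s(\mu)) = \s(\mu)$; these hold already in $\Lambda$ because $\Lambda$ is a category, and they survive restriction since $\FA{\Lambda}$ is closed under $\s$ and, by \cref{lem:finite_alignment}\ref{it:finite_alignment_right_ideal}, under composition. For the associativity axioms, suppose $\mu\nu$ and $(\mu\nu)\kappa$ are defined in $\FA{\Lambda}$; associativity in $\Lambda$ gives that $\nu\kappa$ and $\mu(\nu\kappa)$ are defined in $\Lambda$ with $\mu(\nu\kappa) = (\mu\nu)\kappa \in \mu\Lambda \subseteq \FA{\Lambda}$, whence $\nu\kappa \in \FA{\Lambda}$ by \cref{lem:finite_alignment}\ref{it:finite_alignment_closed_under_final_segments}, so that $\nu\kappa$ is composable in $\FA{\Lambda}$ and the identity holds there; the converse direction is the same computation read the other way, again landing in $\mu\Lambda \subseteq \FA{\Lambda}$. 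The remaining axiom governs definedness of products via $\s$, roughly that $\mu\nu$ is defined precisely when $\s(\mu)\,\nu$ is defined with $\s(\mu)\,\nu = \nu$ in that case, which in $\Lambda$ is just the fact that vertices are identities and which restricts to $\FA{\Lambda}$ because $\s(\mu) \in \FA{\Lambda}$ and the composable pairs of $\FA{\Lambda}$ are those of $\Lambda$.

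The main obstacle I anticipate is one of care rather than of ideas: there is no content beyond \cref{lem:finite_alignment}, so the work is (i) pinning down the correct axiom list for a \emph{right} as opposed to a left constellation, the point being that $\FA{\Lambda}$ need not contain $\r(\mu)$ (\cref{lem:finite_alignment}\ref{it:finite_alignment_not_closed_under_range}), so there is no left-identity structure and only the right-handed axioms are available; and (ii) checking at each step that products do not leave $\FA{\Lambda}$ and that $\s$ does not leave $\FA{\Lambda}$, these being precisely the points where the right-ideal property \cref{lem:finite_alignment}\ref{it:finite_alignment_right_ideal} and closure under $\s$ (\cref{lem:finite_alignment}\ref{it:finite_alignment_closed_under_source}) enter, as opposed to generic category manipulations.
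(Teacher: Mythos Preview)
Your proposal is correct and follows the same approach as the paper, which simply observes in the paragraph preceding the proposition that $\FA{\Lambda}$ is closed under composition (\cref{lem:finite_alignment}\ref{it:finite_alignment_right_ideal}) and under $\s$ (\cref{lem:finite_alignment}\ref{it:finite_alignment_closed_under_source}) but not under $\r$, and then states the proposition without a separate proof environment. Your version is more thorough in that you actually unpack and verify the constellation axioms one by one, whereas the paper leaves this to the reader.
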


\subsection{Lack of a factorisation property}
    \label{sec:no_factorisation}

If $\FA{\Lambda}$ is a finitely aligned $P$-graph, we could simply apply techniques from the extensive literature on \textit{finitely aligned} $P$-graphs to $\FA{\Lambda}$. 
The immediate issue is that $\FA{\Lambda}$ can fail to be closed under $\r$ (\cref{lem:finite_alignment}\ref{it:finite_alignment_not_closed_under_range}). 
However, even the union $\FAr{\Lambda} \coloneqq \FA{\Lambda} \cup \r(\FA{\Lambda})$ may not be a $P$-graph: Recall the $2$-graph $\tg$ from \cref{ex:tg}.
Observe $\lambda\alpha_1 \in \FA{\tg}$ but $\lambda \notin \FA{\tg}$, which means $\FAr{\Lambda}$ does not have a factorisation property. 
That is, $\FAr{\tg}$ is not a $2$-graph.

\subsection{Finitely aligned relative category of paths}
    \label{sec:FA_relative_cop}

Following \cite{Spi14}, a \define{relative category of paths} is a pair $(\Lambda', \Lambda)$, where $\Lambda$ is a category of paths and $\Lambda'$ is a subcategory. 
A relative category of paths $(\Lambda', \Lambda)$ is \define{finitely aligned} if:
\begin{enumerate}
    \item for every $\mu, \nu \in \Lambda'$, there is a finite $J \subseteq \Lambda'$ such that $\mu\Lambda \cap \nu\Lambda = \bigcup_{\lambda \in J}\lambda\Lambda$; and 
    \item for every $\mu \in \Lambda'$, $\mu\Lambda \cap \Lambda' = \mu\Lambda'$.
\end{enumerate}

\begin{proposition}
    \label{prop:finitely_aligned_relative_category_of_paths}
    \hypotheses
    The pair $(\FAr{\Lambda}, \Lambda)$ is a finitely aligned relative category of paths, where 
    \[
        \FAr{\Lambda} \coloneqq \FA{\Lambda} \cup \r(\FA{\Lambda}).
    \]
\end{proposition}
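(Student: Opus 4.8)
The plan is to verify in turn the three requirements hidden in the statement: that $\FAr{\Lambda}$ is a subcategory of the category of paths $\Lambda$, that condition (1) of the definition of finite alignment holds, and that condition (2) holds. Throughout I would argue by cases according to whether a given element of $\FAr{\Lambda}$ lies in $\FA{\Lambda}$ or is a unit in $\r(\FA{\Lambda})$; the key observations to keep in hand are that an element of $\r(\FA{\Lambda})$ is an identity morphism of $\Lambda$ (so absorbs composition), and that by \cref{lem:finite_alignment} the set $\FA{\Lambda}$ is a right ideal of $\Lambda$ that is closed under $\s$, closed under final segments, and finitely aligned at every pair of its elements with a witnessing set again inside $\FA{\Lambda}$.

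First I would check $\FAr{\Lambda}$ is a subcategory. It suffices to see it is closed under composition and contains the identity of each of its objects. For closure under composition, if $\mu, \nu \in \FAr{\Lambda}$ are composable, then either both lie in $\FA{\Lambda}$, whence $\mu\nu \in \FA{\Lambda}$ because $\FA{\Lambda}$ is a right ideal, or at least one of $\mu, \nu$ lies in $\r(\FA{\Lambda})$ and is therefore a unit, in which case $\mu\nu$ equals the other factor and so lies in $\FAr{\Lambda}$. For the identities: every object of $\FAr{\Lambda}$ is the range or source of one of its morphisms, and for $\lambda \in \FA{\Lambda}$ we have $\s(\lambda) \in \FA{\Lambda}$ by \cref{lem:finite_alignment}\ref{it:finite_alignment_closed_under_source} and $\r(\lambda) \in \r(\FA{\Lambda})$ by definition, while a unit in $\r(\FA{\Lambda})$ is its own identity.

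Next, for condition (1), take $\mu, \nu \in \FAr{\Lambda}$. If both lie in $\FA{\Lambda}$ the required finite $J \subseteq \FAr{\Lambda}$ is provided directly by \cref{lem:finite_alignment}\ref{it:finitely_aligned} (indeed with $J \subseteq \FA{\Lambda}$). If at least one of $\mu, \nu$ is a unit, say $\nu = u \in \r(\FA{\Lambda})$, then $u\Lambda$ is precisely the set of paths with range $u$, so $\mu\Lambda \cap u\Lambda$ is empty when $\r(\mu) \neq u$ and equals $\mu\Lambda$ when $\r(\mu) = u$; correspondingly $J = \emptyset$ or $J = \{\mu\}$ works, and both are subsets of $\FAr{\Lambda}$ (the same analysis, with the roles of $\mu$ and $\nu$ swapped, covers the case that $\mu$ is the unit, and also the case that both are units). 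For condition (2), fix $\mu \in \FAr{\Lambda}$. The inclusion $\mu\FAr{\Lambda} \subseteq \mu\Lambda \cap \FAr{\Lambda}$ is immediate from closure of $\FAr{\Lambda}$ under composition. For the reverse inclusion, suppose $\eta = \mu\kappa \in \FAr{\Lambda}$ with $\kappa \in \Lambda$; I must show $\kappa \in \FAr{\Lambda}$. If $\mu \in \FA{\Lambda}$, then $\eta = \mu\kappa \in \FA{\Lambda}$ since $\FA{\Lambda}$ is a right ideal, so $\kappa \in \FA{\Lambda} \subseteq \FAr{\Lambda}$ by \cref{lem:finite_alignment}\ref{it:finite_alignment_closed_under_final_segments}. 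If instead $\mu \in \r(\FA{\Lambda})$ is a unit, then $\eta = \mu\kappa = \kappa$, so $\kappa = \eta \in \FAr{\Lambda}$.

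I expect the main difficulty to be essentially organisational rather than conceptual: the burden is to keep the case split (genuine element of $\FA{\Lambda}$ versus adjoined range unit) tidy and to confirm at each step that the adjoined units do not spoil either the subcategory structure or the two relative-alignment conditions. All the substantive content—finite alignment of pairs from $\FA{\Lambda}$ with a witness inside $\FA{\Lambda}$, and closure of $\FA{\Lambda}$ under final segments—has already been isolated in \cref{lem:finite_alignment}, so no new combinatorial work on $\Lambda$ is needed here.
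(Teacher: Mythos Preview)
Your proposal is correct and follows essentially the same approach as the paper: verify the subcategory axioms, then handle conditions (1) and (2) by splitting into the cases where an element lies in $\FA{\Lambda}$ versus is an adjoined range unit, relying on \cref{lem:finite_alignment} for all the substantive facts. The only minor difference is that for the nontrivial inclusion in condition (2) you invoke the right-ideal property together with \cref{lem:finite_alignment}\ref{it:finite_alignment_closed_under_final_segments}, whereas the paper uses the packaged consequence \cref{lem:finite_alignment}\ref{it:finite_alignment_propagates}; these are equivalent, and your treatment of the unit case there is arguably more explicit than the paper's.
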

\begin{proof}
    By \cref{lem:finite_alignment}, $\FA{\Lambda}$ is closed under the composition and source maps in $\Lambda$, so $\FAr{\Lambda}$ is closed under the composition, source \textit{and} range maps in $\Lambda$.
    That is, $\FAr{\Lambda}$ is a subcategory of $\Lambda$, and $\Lambda$ is a category of paths.
    Let $\mu, \nu \in \FAr{\Lambda}$.
    If $\mu$ or $\nu$ is in $\r(\FA{\Lambda})$, then $\mu\Lambda \cap \nu\Lambda = \emptyset$, $\mu\Lambda \cap \nu\Lambda = \nu\Lambda$ or $\mu\Lambda \cap \nu\Lambda = \mu\Lambda$.
    Otherwise, $\mu, \nu \in \FA{\Lambda}$, so \cref{lem:finite_alignment}\ref{it:finitely_aligned} implies there is a finite $J \subseteq \FA{\Lambda}$ such that $\mu\Lambda \cap \nu\Lambda = \bigcup_{\lambda \in J}\lambda\Lambda$. 
    It remains to show $\mu\Lambda \cap \FAr{\Lambda} = \mu\FAr{\Lambda}$. 
    Suppose $\mu\nu \in \mu\Lambda \cap \FAr{\Lambda}$. 
    Then, $\nu \in \s(\mu)\Lambda \subseteq \FA{\Lambda}$ by \cref{lem:finite_alignment}\ref{it:finite_alignment_propagates}, so $\mu\nu \in \mu\FAr{\Lambda}$. 
    Conversely, if $\mu\nu \in \mu\FAr{\Lambda}$, either $\mu\nu = \mu \in \FA{\Lambda}$, or both $\mu, \nu \in \FA{\Lambda}$ and so $\mu\nu \in \FA{\Lambda}$ because $\FA{\Lambda}$ is closed under composition (\cref{lem:finite_alignment}\ref{it:finite_alignment_right_ideal}).
\end{proof}

Spielberg associated groupoids to relative categories of paths in \cite{Spi14, Spi20}. 
We show in \cref{sec:gpd_of_relative_cop} that Spielberg's groupoid of $(\FAr{\Lambda}, \Lambda)$ does not coincide with the path groupoid $\PG{\Lambda}$ we define in \cref{sec:path_groupoid}.

\section{Path spaces}
    \label{sec:path_space}

In this section, given any $P$-graph $\Lambda$ with $\FA{\Lambda} \neq \emptyset$, we use $\FA{\Lambda}$ from \cref{sec:finite_alignment} to give a novel definition of a path space $\PS{\Lambda}$ and boundary-path space $\BPS{\Lambda}$ for $\Lambda$. 
We show in \cref{lem:cpt_char} that $\lambda \in \FA{\Lambda}$ if and only if $\Fellin{\filters{\Lambda}}{\lambda}$ is compact, where $\filters{\Lambda}$ is the set of filters of $\Lambda$, which implies $\PS{\Lambda}$ is locally compact (\cref{thm:path_spaces}).

\subsection{Filters}
    \label{sec:filters}

A \define{filter} of a $P$-graph $\Lambda$ is a nonempty subset $x$ of $\Lambda$ that is \define{hereditary} (i.e. $\lambda \preceq \mu \in x$ implies $\lambda \in x$) and \define{directed} (i.e. $\mu,\nu \in x$ implies there is $\lambda \in x$ such that $\mu,\nu \preceq \lambda$).
We denote the set of filters of $\Lambda$ by $\filters{\Lambda}$. 
Due to their defining properties, finite filters are of the form $\principal{\mu}$ for some $\mu \in \Lambda$, and infinite filters are of the form $\bigcup_{n} \principal{\mu_{n}}$ for some $\preceq$-increasing sequence $(\mu_{n}) \subseteq \Lambda$.
In this way, the filters of $\Lambda$ resemble the paths of a directed graph.
Filters have been used to give combinatorial descriptions for spaces of path-like objects for some time. 
In \cite{Nic92}, Nica showed filters provide a path model for the spectrum of the diagonal subalgebra of the Wiener--Hopf C*-algebra of a quasi-lattice ordered group. 
Paterson and Welch used filters to model paths in $k$-graphs in \cite[Lemma 2.1]{PW05}. 
Exel used filters to model paths in semigroupoids in \cite[\S 20]{Exe08}. 
Indeed, in the original paper \cite{BSV13} on $P$-graphs, filters played the role of paths, and we will do the same.

We recall basics of filters from \cite{BSV13,RW17}.
The restriction $\d|_x$ of $\d$ to any $x \in \filters{\Lambda}$ is injective \cite[Lemma 6.6(b)]{RW17}. 
For each $\lambda \in \Lambda$, we write $\principal{\lambda} \coloneqq \setof{\mu \in \Lambda}{\mu \preceq \lambda} \in \filters{\Lambda}$ \cite[Proposition 6.8(b)]{RW17}.
For any $x \in \filters{\Lambda}$, there is a unique element of $x \cap \Lambda^{(0)}$ denoted by $\r(x)$ \cite[Lemma 6.4(a)]{RW17}.
In particular, $\r(\principal{\lambda}) = \r(\lambda)$.
Like in \cite[\S 3]{BSV13}, we say $x \in \filters{\Lambda}$ is an \define{ultrafilter} of $\Lambda$ if, for all $y \in \filters{\Lambda}$, $x \subseteq y$ implies $x = y$.
We denote the set of ultrafilters of $\Lambda$ by $\ultrafilters{\Lambda}$. 
The following result is standard, but we prove it to emphasise there is no need for finite alignment.
Recall from \cref{eq:subspace_cylinders} that we define $\Fell{\filters{\Lambda}}{\mu}{K} = \Fell{}{\mu}{K} \cap \filters{\Lambda}$ for each $\mu \in \Lambda$ and finite $K \subseteq \Lambda$.

\begin{lemma}
    \label{lem:basis_for_topology_on_filters}
    \hypotheses
    The collection of 
    $\Fell{\filters{\Lambda}}{\mu}{K}$, where $\mu \in \Lambda$ and $K \subseteq \Lambda$ is finite, is a basis for $\filters{\Lambda}$ consisting of clopen sets.
\end{lemma}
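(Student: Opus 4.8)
The plan is to compare the proposed collection with the standard basis $\{\Fell{\filters{\Lambda}}{K_1}{K_2} : K_1, K_2 \subseteq \Lambda \text{ finite}\}$ for the subspace topology on $\filters{\Lambda}$ recorded in \cref{sec:prelims}, and to show that one can always shrink a given standard basic neighbourhood of a point to a member of the proposed collection by exploiting that filters are directed and hereditary. I would first remark that the hypothesis $\FA{\Lambda} \neq \emptyset$ plays no role in the argument, which is precisely the point of recording the lemma.

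For the clopen assertion: each $\Fell{\filters{\Lambda}}{\mu}{K}$ is by definition $\Fell{\filters{\Lambda}}{\{\mu\}}{K}$, hence a member of the standard basis and in particular open; and $\Fell{}{\{\mu\}}{K}$ is a finite intersection of closed subsets of $\powerset{\Lambda}$ by \cref{lem:compact_Hausdorff_power_set}, so $\Fell{\filters{\Lambda}}{\mu}{K} = \Fell{}{\{\mu\}}{K} \cap \filters{\Lambda}$ is closed in $\filters{\Lambda}$.

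For the basis property I would fix $x \in \filters{\Lambda}$ and a standard basic open set $\Fell{\filters{\Lambda}}{K_1}{K_2}$ containing $x$, so that $K_1 \subseteq x$ and $x \cap K_2 = \emptyset$, and seek $\mu \in \Lambda$ with $x \in \Fell{\filters{\Lambda}}{\mu}{K_2} \subseteq \Fell{\filters{\Lambda}}{K_1}{K_2}$. When $K_1 = \emptyset$ I would simply take any $\mu \in x$ (for instance $\mu = \r(x)$, which lies in $x$), and note $\Fell{\filters{\Lambda}}{\mu}{K_2} \subseteq \Fell{\filters{\Lambda}}{\emptyset}{K_2}$ is immediate. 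When $K_1 \neq \emptyset$ I would use directedness of $x$, together with induction on $|K_1|$ and transitivity of the partial order $\preceq$, to produce a single $\mu \in x$ with $\kappa \preceq \mu$ for every $\kappa \in K_1$; then $\mu \in x$ and $x \cap K_2 = \emptyset$ give $x \in \Fell{\filters{\Lambda}}{\mu}{K_2}$, while hereditariness of any filter $y$ with $\mu \in y$ forces $\kappa \in y$ for all $\kappa \in K_1$, hence $K_1 \subseteq y$ and $y \in \Fell{\filters{\Lambda}}{K_1}{K_2}$, giving the required containment.

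I do not expect a genuine obstacle. The only mildly delicate point is assembling finitely many elements of $K_1 \subseteq x$ into one common upper bound lying inside $x$, which is exactly the inductive combination of directedness and transitivity described above; the degenerate cases (for instance $\mu \in K_2$, which would make $\Fell{}{\{\mu\}}{K_2}$ empty but cannot occur here since $\mu \in x$ and $x \cap K_2 = \emptyset$) are routine to dispatch.
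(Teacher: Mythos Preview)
Your proposal is correct and follows essentially the same route as the paper: reduce to the standard subspace basis $\Fell{\filters{\Lambda}}{K_1}{K_2}$, use directedness of $x$ to consolidate $K_1$ into a single $\mu \in x$, use hereditariness for the inclusion $\Fell{\filters{\Lambda}}{\mu}{K_2} \subseteq \Fell{\filters{\Lambda}}{K_1}{K_2}$, and invoke \cref{lem:compact_Hausdorff_power_set} for closedness. The only cosmetic difference is that the paper absorbs the case $K_1 = \emptyset$ by noting that filters are nonempty (so one may assume $K_1 \neq \emptyset$), whereas you handle it explicitly via $\mu = \r(x)$.
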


\begin{remark}
    If $\Lambda$ is finitely aligned, we can assume $K \subseteq \mu\Lambda$ (see \cref{lem:FA_basis_for_topology_on_filters}).
\end{remark}

\begin{proof}[Proof of \cref{lem:basis_for_topology_on_filters}]
    Since $\filters{\Lambda}$ has the subspace topology from $\powerset{\Lambda}$, the collection of 
    \[
        \Fell{\filters{\Lambda}}{K_1}{K_2},
    \]
    where $K_1,K_2 \subseteq \Lambda$ are finite, is a basis for $\filters{\Lambda}$.
    Suppose 
    \(
        x 
        \in \Fell{\filters{\Lambda}}{K_1}{K_2} 
    \)
    for some finite $K_1,K_2 \subseteq \Lambda$.
    Because filters are nonempty, we can assume without loss of generality that $K_1$ is nonempty.
    Then, because filters are directed, there is $\mu \in x$ such that $\lambda \preceq \mu$ for all $\lambda \in K_1$.
    The hereditary property of filters implies 
    \(
        x 
        \in \Fell{\filters{\Lambda}}{\mu}{K_2} 
        \subseteq \Fell{\filters{\Lambda}}{K_1}{K_2},
    \)
    as required.
    The intersection of a closed set with a subspace is closed in the subspace topology, so each $\Fell{\filters{\Lambda}}{\mu}{K_2}$ is closed in $\filters{\Lambda}$ by \cref{lem:compact_Hausdorff_power_set}.
\end{proof}

\subsection{Path and boundary-path spaces}

If $\Lambda$ is finitely aligned, then $\filters{\Lambda}$ identifies with the unit space of the groupoid used to define the Toeplitz C*-algebra of $\Lambda$ in \cite{Spi12}; in particular, $\filters{\Lambda}$ is locally compact. 
Yeend showed in \cite[Remark 3.7]{Yee07} that $\filters{\Yeends}$ is not locally compact for the nonfinitely aligned $2$-graph $\Yeends$ from \cref{ex:Yee}. 
(Actually, Yeend uses graph morphisms instead of filters, like in the original $k$-graph paper \cite{KP00}, but the author showed in \cite{Jon24} that Yeend's arguments apply to $\filters{\Yeends}$ as well.)
We use $\FA{\Lambda}$ from \cref{sec:finite_alignment} to identify an open subspace $\PS{\Lambda}$ of $\filters{\Lambda}$ that is locally compact even if $\Lambda$ is not finitely aligned. 
The definition of $\PS{\Lambda}$ also allows a locally compact and directed semigroup action of $P$ on $\PS{\Lambda}$ in \cref{sec:path_groupoid}.

\begin{definition}[Path space]
    \label{def:PS}
    \hypotheses 
    The \define{path space} of $\Lambda$ is the set $\PS{\Lambda}$ of $x \in \filters{\Lambda}$ for which $x \cap \FA{\Lambda} \neq \emptyset$.
\end{definition}

\begin{remark}
    \label{rem:FA_empty_iff_PS_empty}
    Notice $\FA{\Lambda} \neq \emptyset$ implies $\PS{\Lambda} \neq \emptyset$ because, for each $\lambda \in \FA{\Lambda}$, $\principal{\lambda} \in \PS{\Lambda}$.
    On the other hand, if $\FA{\Lambda} = \emptyset$, then $\PS{\Lambda} = \emptyset$.
\end{remark}

\begin{example}
    \label{ex:tg_PS}
    Recall $\tg$ from \cref{ex:tg}.
    The set of filters is 
    \[
        \filters{\tg} = \{\{t\},\{u\},\{v\},\{w\},\principal{\mu},\principal{\lambda}\} \cup \{\principal{\alpha_n}, \principal{\beta_n}, \principal{\mu\beta_n} \mid n \in \N\}.
    \]
    Since $\FA{\tg} = \tg \setminus \{v, \lambda, \mu\}$, we have
    \(
        \PS{\tg} = \filters{\tg} \setminus \{\{v\}, \principal{\lambda}, \principal{\mu}\}.
    \)
\end{example}

\begin{lemma}
    \label{lem:PS_characterisation}
    \hypotheses 
    For any $x \in \filters{\Lambda}$, the following are equivalent:
    \begin{enumerate}
        \item\label{it:PS_original} $x \in \PS{\Lambda}$;
        \item\label{it:PS_stronger} $x \cap \FA{\Lambda} \cap \lambda\Lambda \neq \emptyset$ for all $\lambda \in x$.
    \end{enumerate}
\end{lemma}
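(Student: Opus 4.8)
The plan is to prove $\ref{it:PS_stronger} \Rightarrow \ref{it:PS_original}$ trivially and then to establish the substantive direction $\ref{it:PS_original} \Rightarrow \ref{it:PS_stronger}$ by exploiting that $\FA{\Lambda}$ is a right ideal closed under final segments (\cref{lem:finite_alignment}\ref{it:finite_alignment_right_ideal} and \ref{it:finite_alignment_closed_under_final_segments}) together with the directedness of filters. For the easy direction: if $x \cap \FA{\Lambda} \cap \lambda\Lambda \neq \emptyset$ for all $\lambda \in x$, then since $x$ is nonempty we may pick any $\lambda \in x$ and conclude $x \cap \FA{\Lambda} \neq \emptyset$, which is exactly $x \in \PS{\Lambda}$.

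For the main direction, suppose $x \in \PS{\Lambda}$, so there is some $\kappa \in x \cap \FA{\Lambda}$. Fix an arbitrary $\lambda \in x$. Because $x$ is directed, there is $\mu \in x$ with $\kappa \preceq \mu$ and $\lambda \preceq \mu$. Write $\mu = \kappa\delta$ for some $\delta \in \Lambda$; then $\mu = \kappa\delta \in \kappa\Lambda \subseteq \FA{\Lambda}$ since $\FA{\Lambda}$ is a right ideal. Also $\lambda \preceq \mu$ gives $\mu \in \lambda\Lambda$. Therefore $\mu \in x \cap \FA{\Lambda} \cap \lambda\Lambda$, and since $\lambda \in x$ was arbitrary we obtain \ref{it:PS_stronger}.

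I do not anticipate a genuine obstacle here: the argument is a short combinatorial manipulation, and the only subtlety is making sure the right-ideal property of $\FA{\Lambda}$ is invoked correctly (it is what upgrades ``$\kappa \in \FA{\Lambda}$'' to ``$\mu \in \FA{\Lambda}$'' once $\kappa \preceq \mu$). One could alternatively phrase this via \cref{lem:finite_alignment}\ref{it:finite_alignment_propagates}, noting $\s(\kappa)\Lambda \subseteq \FA{\Lambda}$ and writing $\mu = \kappa\delta$ with $\delta \in \s(\kappa)\Lambda$, but the direct right-ideal argument is cleanest. The equivalence then records that membership in $\PS{\Lambda}$, though defined by a single witness, automatically propagates to a witness beyond every element of the filter — a fact that will be convenient when showing $\PS{\Lambda}$ is open in $\filters{\Lambda}$ and when constructing the semigroup action.
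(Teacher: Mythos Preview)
Your proof is correct and follows essentially the same approach as the paper: the easy direction uses nonemptiness of filters, and the substantive direction picks a witness in $x \cap \FA{\Lambda}$, uses directedness to find a common upper bound with the given $\lambda$, and invokes the right-ideal property of $\FA{\Lambda}$ (\cref{lem:finite_alignment}\ref{it:finite_alignment_right_ideal}) to conclude.
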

\begin{proof}
    We have that \ref{it:PS_stronger} implies \ref{it:PS_original} because filters are nonempty.
    Suppose $x \in \PS{\Lambda}$, and fix $\lambda \in x$. 
    There is some $\mu \in x \cap \FA{\Lambda}$.
    Filters are directed, so there is $\nu \in x$ with $\lambda, \mu \preceq \nu$.
    In particular, $\nu \in \lambda\Lambda$, and $\nu \in \FA{\Lambda}$ because $\FA{\Lambda}$ is a right ideal in $\Lambda$ (\cref{lem:finite_alignment}\ref{it:finite_alignment_right_ideal}).
\end{proof}

\begin{definition}[Boundary-path space]
    \label{def:BPS}
    \hypotheses 
    The \define{boundary-path space} $\BPS{\Lambda}$ of $\Lambda$ is the closure of $\ultrafilters{\Lambda} \cap \PS{\Lambda}$ in $\PS{\Lambda}$.
\end{definition}

\subsection{Local compactness of the path spaces}

In the remainder of this section, we prove topological properties of $\filters{\Lambda}$ and $\BPS{\Lambda}$, which are summarised in \cref{thm:path_spaces}. 
The main work is to show they are locally compact. 
This involves the following lemma that characterises the elements of $\FA{\Lambda}$ in terms of the topology on $\filters{\Lambda}$. 
A consequence of the lemma is that the definition of $\PS{\Lambda}$ in this paper is equivalent to that of the author's thesis \cite{Jon24}.

\begin{lemma}\label{lem:cpt_char}
    \hypotheses
    For any $\lambda \in \Lambda$, the following are equivalent:
    \begin{enumerate}
        \item\label{it:cpt_char} $\lambda \in \FA{\Lambda}$;
        \item\label{it:cpt} $\Fellin{\filters{\Lambda}}{\lambda}$ is compact.
    \end{enumerate}
\end{lemma}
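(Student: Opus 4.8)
The plan is to prove both implications via the sequential characterisation of compactness available on $\filters{\Lambda}$, since $\powerset{\Lambda}$ is second-countable and metrizable (\cref{lem:sequential_compactness}) and $\Fellin{\filters{\Lambda}}{\lambda}$ is a subspace of $\powerset{\Lambda}$. So it suffices to show: $\lambda \in \FA{\Lambda}$ if and only if every sequence in $\Fellin{\filters{\Lambda}}{\lambda}$ has a subsequence converging (in $\powerset{\Lambda}$) to a point of $\Fellin{\filters{\Lambda}}{\lambda}$. Note the cylinder is already closed in $\filters{\Lambda}$ by \cref{lem:basis_for_topology_on_filters}, and $\filters{\Lambda}$ itself need not be closed in $\powerset{\Lambda}$, so I should be a little careful: a limit of filters containing $\lambda$ is automatically a filter (I would check hereditarity and directedness pass to the limit using \cref{lem:convergence}) and automatically contains $\lambda$, so in fact $\Fellin{\filters{\Lambda}}{\lambda}$ is sequentially closed in $\powerset{\Lambda}$; hence compactness of the cylinder is equivalent to: every sequence in it has a convergent subsequence in $\powerset{\Lambda}$.

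For the direction \ref{it:cpt_char}$\Rightarrow$\ref{it:cpt}: assume $\lambda \in \FA{\Lambda}$ and take a sequence $(x_n)$ in $\Fellin{\filters{\Lambda}}{\lambda}$. I would build a convergent subsequence by a diagonal argument over the countable set $\Lambda$, but the finite-alignment hypothesis is what makes the diagonalisation terminate with a genuine filter rather than drift off. Concretely: enumerate $\Lambda$, and successively pass to subsequences so that membership of each fixed $\mu \in \Lambda$ stabilises; the diagonal subsequence $(x_{n_k})$ then converges pointwise to some $x \subseteq \Lambda$ with $\lambda \in x$. The content is that $x$ is directed: given $\mu, \nu \in x$, eventually both lie in $x_{n_k}$, which is directed, so there is $\rho_k \in x_{n_k}$ with $\mu,\nu \preceq \rho_k$; since $x_{n_k} \subseteq \mu\Lambda \cap \nu\Lambda = \bigcup_{\kappa \in J}\kappa\Lambda$ for the \emph{finite} set $J$ guaranteed by $\lambda \in \FA{\Lambda}$ (applied to $\mu,\nu \in \lambda\Lambda$, after noting $\mu,\nu \succeq$ some common element of $\lambda\Lambda$ — here I use \cref{lem:PS_characterisation}-style directedness inside $x_n$, or rather that $\lambda \preceq \mu$), some single $\kappa \in J$ occurs for infinitely many $k$, hence $\kappa \in x$ (after a further subsequence) and $\mu,\nu \preceq \kappa$. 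Hereditarity of $x$ is immediate from hereditarity of each $x_{n_k}$ and \cref{lem:convergence}. So $x \in \Fellin{\filters{\Lambda}}{\lambda}$.

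For the contrapositive of \ref{it:cpt}$\Rightarrow$\ref{it:cpt_char}: if $\lambda \notin \FA{\Lambda}$, there are $\mu \in \lambda\Lambda$ and $\nu \in \Lambda$ with $\mu\Lambda \cap \nu\Lambda$ not a finite union of principal right ideals. From this I would manufacture a sequence in $\Fellin{\filters{\Lambda}}{\lambda}$ with no convergent subsequence. The idea: choose an infinite antichain-like family, i.e. elements $\rho_1, \rho_2, \ldots$ in $\mu\Lambda \cap \nu\Lambda$ no finitely many of whose principal ideals cover the intersection; then for each $n$ pick a filter $x_n \ni \rho_n$ containing $\mu$ (e.g. $x_n = \principal{\rho_n}$, which lies in $\Fellin{\filters{\Lambda}}{\lambda}$ since $\lambda \preceq \mu \preceq \rho_n$). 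Any convergent subsequence would have a limit $x$ that is a filter with $\mu,\nu \in x$, hence contains some $\rho \succeq \mu,\nu$; but then $\rho \in x_{n}$ eventually, forcing $\rho_{n} \in \rho\Lambda$ for infinitely many $n$, i.e. $\rho\Lambda$ alone ``captures'' infinitely many of the $\rho_n$ — combined with a careful initial choice of the $\rho_n$ (sparse enough that no single principal ideal inside $\mu\Lambda\cap\nu\Lambda$ contains infinitely many of them, which is arrangeable precisely because the intersection is \emph{not} finitely covered) this is a contradiction. I would lift the relevant non-finite-alignment statement from $(\mu,\nu)$ back along $\lambda \preceq \mu$ using \cref{lem:finite_alignment}\ref{it:finite_alignment_closed_under_final_segments} if needed to keep everything inside $\lambda\Lambda$.

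The main obstacle is the reverse direction: extracting from ``$\mu\Lambda\cap\nu\Lambda$ is not a finite union of $\kappa\Lambda$'s'' a concrete sequence whose lack of a convergent subsequence can be verified cleanly. The subtlety is that the intersection could still be, say, an infinite increasing union, so one cannot simply take an antichain; the correct choice of $(\rho_n)$ must ensure that for \emph{every} $\rho \in \mu\Lambda\cap\nu\Lambda$ only finitely many $\rho_n$ lie in $\rho\Lambda$, and arguing that such a choice exists is exactly where the failure of finite alignment is used essentially (a standard König-type / greedy construction: repeatedly pick $\rho_n$ not covered by $\rho_1\Lambda \cup \cdots \cup \rho_{n-1}\Lambda$, which is possible at every stage precisely because no finite subfamily covers the intersection). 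The forward direction's diagonal argument is routine once the finite set $J$ is in hand; the bookkeeping to ensure the limit is directed using $J$ is the only place care is needed there.
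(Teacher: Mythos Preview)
Your overall plan---sequential compactness, a diagonal argument for \ref{it:cpt_char}$\Rightarrow$\ref{it:cpt}, and a greedily chosen sequence of principal filters for the contrapositive---is the same as the paper's. Two gaps to flag.

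First, the preliminary claim that $\Fellin{\filters{\Lambda}}{\lambda}$ is sequentially closed in $\powerset{\Lambda}$ is false: directedness need not pass to limits. In $\tg$ (\cref{ex:tg}) one has $\principal{\lambda\alpha_n}\to\{v,\lambda,\mu\}$ in $\powerset{\tg}$, and $\{v,\lambda,\mu\}$ is not directed (cf.\ \cref{ex:left_shift_not_open}). You do not actually rely on this claim---you verify directedness separately in the forward direction, and in the reverse direction the limit is a filter by the compactness hypothesis---but the reduction to ``convergent subsequence in $\powerset{\Lambda}$'' should be dropped. Relatedly, in your directedness check for \ref{it:cpt_char}$\Rightarrow$\ref{it:cpt}, the step ``$\mu\Lambda\cap\nu\Lambda=\bigcup_{\kappa\in J}\kappa\Lambda$ by $\lambda\in\FA{\Lambda}$'' needs $\mu\in\lambda\Lambda$ or $\nu\in\lambda\Lambda$, and elements of a filter containing $\lambda$ need not lie in $\lambda\Lambda$. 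The repair is to pick $\rho_k\in x_{n_k}$ with $\lambda,\mu,\nu\preceq\rho_k$ and use finite alignment at $\lambda$ twice to finitely cover $\lambda\Lambda\cap\mu\Lambda\cap\nu\Lambda$. The paper sidesteps this altogether by running the diagonal constructively: it enumerates $\lambda\Lambda$ and at each stage uses finite alignment to extend a chain $\lambda_0\lambda_1\cdots\lambda_n$ inside the current subsequence, so the limit is explicitly $\bigcup_n\principal{\lambda_0\cdots\lambda_n}$, a filter by construction.

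Second, in \ref{it:cpt}$\Rightarrow$\ref{it:cpt_char}, the greedy rule ``$\rho_n\notin\rho_1\Lambda\cup\cdots\cup\rho_{n-1}\Lambda$'' does not deliver the sparseness you correctly identify as needed: an element $\rho\in\mu\Lambda\cap\nu\Lambda$ not among the $\rho_i$ is unconstrained and can still satisfy $\rho\preceq\rho_n$ for infinitely many $n$. The paper fixes this by first enumerating the whole intersection as $(\nu_n)$ and choosing $\kappa_n\in(\mu\Lambda\cap\nu\Lambda)\setminus(\nu_1\Lambda\cup\cdots\cup\nu_n\Lambda)$; then every $\rho=\nu_{n'}$ has $\kappa_n\notin\rho\Lambda$ for all $n\geq n'$, which is exactly what the contradiction needs.
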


\begin{proof}
    Suppose $\Fellin{\filters{\Lambda}}{\lambda}$ is compact.
    We show $\Lambda$ is finitely aligned at $\FA{\Lambda}$.
    Fix $\lambda' \in \lambda\Lambda$ and $\mu \in \Lambda$.
    If $\lambda'\Lambda \cap \mu\Lambda$ is finite, then indeed $\Lambda$ is finitely aligned at $(\lambda', \mu)$.
    Otherwise, we can write $\lambda'\Lambda  \cap \mu\Lambda = (\nu_n)$ because $\Lambda$ is countable.
    Suppose for a contradiction that $\Lambda$ is not finitely aligned at $(\lambda', \mu)$.
    Then, letting $J_n \coloneqq \{\nu_1, \nu_2, \dots, \nu_n\}$, for all $n$, we have $\lambda'\Lambda \cap \mu\Lambda \neq \bigcup_{\nu \in J_n}\nu\Lambda \subseteq \lambda'\Lambda \cap \mu\Lambda$.
    Thus, there is a sequence $(\kappa_n)$ such that $\kappa_n \in (\lambda'\Lambda \cap \mu\Lambda) \setminus \bigcup_{\nu \in J_n}\nu\Lambda$ for all $n$.
    In particular, $(\principal{\kappa_n}) \subseteq \Fellin{\filters{\Lambda}}{\lambda'}$.
    Since $\Fellin{\filters{\Lambda}}{\lambda}$ is compact and $\lambda' \in \lambda\Lambda$, $\Fellin{\filters{\Lambda}}{\lambda'}$ is compact too.
    By \cref{lem:sequential_compactness}, $\Fellin{\filters{\Lambda}}{\lambda'}$ is sequentially compact, so there is a subsequence $(\principal{\kappa_{n_k}})$ of $(\principal{\kappa_n})$ and there is an $x \in \Fellin{\filters{\Lambda}}{\lambda'}$ such that $\principal{\kappa_{n_k}} \to x$.
    Because $\mu \in \principal{\kappa_n}$ for all $n$, \cref{lem:convergence} implies $\mu \in x$.
    Filters are directed, so $x \cap \lambda'\Lambda \cap \mu\Lambda \neq \emptyset$ and $\lambda'\Lambda \cap \mu\Lambda = (\nu_n)$, so there is $n'$ such that $\nu_{n'} \in x \cap \lambda'\Lambda \cap \mu\Lambda$.
    Because $\nu_{n'} \in x$ and $\principal{\kappa_{n_k}} \to x$, $\nu_{n'}$ is eventually inside $(\principal{\kappa_{n_k}})$ by \cref{lem:convergence}.
    That is, there is $K \in \N$ such that $\nu_{n'} \in \principal{\kappa_{n_k}}$ for all $k \geq K$.
    By definition of $\principal{\kappa_{n_k}}$, we have $\kappa_{n_k} \in \nu_{n'}\Lambda \subseteq \bigcup_{\nu \in J_{n'}} \nu\Lambda$ for all $k \geq K$.
    Recall, for each $n$, $\kappa_n$ was chosen so that $\kappa_n \notin \bigcup_{\nu \in J_n}\nu\Lambda$.
    Thus, we nearly have a contradiction.
    We just need to `wait' for $n_k$ to surpass $n'$.
    Formally, since $n_k$ is unbounded, there is $K'$ such that, for all $k \geq K'$, we have $n' \leq n_k$.
    Let $N \coloneqq \max\{K, K'\}$.
    Then, $N \geq K$ implies $\kappa_{n_N} \in \bigcup_{\nu \in J_{n'}} \nu\Lambda$.
    Also, $N \geq K'$ implies $n' \leq n_N$, and so $J_{n'} \subseteq J_{n_N}$.
    Therefore, $\kappa_{n_N} \in \bigcup_{\nu \in J_{n'}} \nu\Lambda \subseteq \bigcup_{\nu \in J_{n_N}} \nu\Lambda$, which is a contradiction.

    We prove \ref{it:cpt_char} implies \ref{it:cpt} by slightly modifying the proof of \cite[Lemma 5.2.1]{Jon24}.
    Suppose $\lambda \in \FA{\Lambda}$.
    If $\lambda\Lambda$ is finite, then $\Fellin{\filters{\Lambda}}{\lambda}$ is finite and therefore compact.
    Now suppose $\lambda\Lambda$ is infinite and write $\lambda\Lambda = (\mu_n)$ such that $\mu_0 = \lambda$ and $\mu_i \neq \mu_j$ whenever $i \neq j$.
    Let $(x_j)$ be a sequence in $\Fellin{\filters{\Lambda}}{\lambda}$.
    We construct a convergent subsequence of $(x_j)$ via an infinite collection of nested subsequences $(x_{f_n(j)})$, each having the property that some extension of $\mu_n$ is in $x_{f_n(j)}$, for all $j$. 
    These extensions give rise to a particular $x \in \Fellin{\filters{\Lambda}}{\lambda}$ and the sequence $(x_{f_i(i)})$ converges to $x$.
    
    Let $\lambda_0 \coloneqq \mu_0 = \lambda$, and let $(x_{f_0(j)}) = (x_j)$.
    Notice that 
    \(
        \lambda_0 \in x_{f_0(j)},
    \)
    for all $j$.
    Consider $\mu_1$.
    Suppose $\mu_1$ is contained in infinitely many $x_{f_0(j)}$.
    That is, there is a subsequence $(x_{f_1'(j)})$ of $(x_{f_0(j)})$ such that $\mu_1 \in x_{f_1'(j)}$ for all $j$.
    Since $\lambda_0 = \lambda \in \lambda\Lambda$, $\Lambda$ is finitely aligned at $(\lambda_0, \mu_1)$, so there is a finite $J_1 \subseteq \Lambda$ such that $\lambda_0\Lambda \cap \mu_1\Lambda = \bigcup_{\gamma \in J_1} \gamma\Lambda$.
    For every $j$, $\lambda_0$ and $\mu_1$ are in $x_{f_1'(j)}$, which is directed, so $\emptyset \neq \lambda_0\Lambda \cap \mu_1\Lambda \cap x_{f_1'(j)} \subseteq \bigcup_{\gamma \in J_1} \gamma\Lambda \cap x_{f_1'(j)}$.
    For all $j$, $x_{f_1'(j)}$ is hereditary, so there is a $\gamma_j \in J_1 \cap x_{f_1'(j)}$.
    Since $J_1$ is finite, the pigeonhole principle implies there is $\gamma \in J_1$ such that, for infinitely many $j$, $\gamma \in x_{f_1'(j)}$.
    Let $(x_{f_1(j)})$ be the subsequence of $(x_{f_1'(j)})$ given by those infinitely many $j$, and let $\lambda_1 \in \Lambda$ be such that $\lambda_0\lambda_1 = \gamma$.
    Thus, $\lambda_0\lambda_1 \in x_{f_1(j)}$ for all $j$.
    Observe $\mu_1 \preceq \lambda_0\lambda_1$.
    On the other hand, if $\mu_1$ is contained in only finitely many $x_{f_0(j)}$, let $(x_{f_1(j)}) \coloneqq (x_{f_0(j)})$, and let $\lambda_1 \coloneqq \s(\lambda_0)$.
    In both cases,
    \(
        \lambda_0\lambda_1 \in x_{f_1(j)},
    \)
    for all $j$.
    
    Repeating the above argument with $\lambda_0\lambda_1$ instead of $\lambda_0$ and with the index of every other variable increased by $1$ produces $\lambda_2 \in \Lambda$ and a subsequence $(x_{f_2(j)})$ of $(x_{f_1(j)})$ such that $\lambda_0\lambda_1\lambda_2 \in x_{f_2(j)}$ for all $j$.
    Moreover, if $\mu_2$ is contained in infinitely many $x_{f_1(j)}$, then $\mu_2 \preceq \lambda_0\lambda_1\lambda_2$.
    Indeed, this can be repeated again for each $n \geq 3$.
    Thus, for each $n \geq 0$, we have a subsequence $(x_{f_n(j)})$ of $(x_j)$, where $(x_{f_0(j)}) = (x_j)$, and $(x_{f_{n+1}(j)})$ is a subsequence of $(x_{f_n(j)})$, for all $n \geq 0$.
    Also, we have a subset $\setof{\lambda_n}{n \geq 0} \subseteq \Lambda$ such that 
    \(
        \lambda_0\lambda_1\dots\lambda_n \in x_{f_n(j)},
    \)
    for all $n, j \geq 0$.
    In particular, for any $i \geq 0$,
    \(
        \lambda_0\lambda_1\dots\lambda_i \in x_{f_i(i)},
    \)
    Since filters are hereditary,
    \begin{equation}\label{eq:fa_implies_compact_cylinders}
        0 \leq n \leq i
        \implies 
        \lambda_0\lambda_1\dots\lambda_n \in x_{f_i(i)}.
    \end{equation}
    Lastly, for each $n \geq 1$, if $\mu_n$ is contained in infinitely many $x_{f_{n-1}(j)}$, then $\mu_n \preceq \lambda_0\lambda_1\dots\lambda_n$.

    We claim that
    \[
        x_{f_i(i)}
        \to x 
        \coloneqq \bigcup_n \principal{\lambda_0\lambda_1\dots\lambda_n}.
    \]
    Because $(\principal{\lambda_0\lambda_1\dots\lambda_n})$ is totally ordered with respect to inclusion, $x \in \filters{\Lambda}$ (see the proof of \cite[Lemma 3.2]{BSV13}).
    Moreover, $\lambda_0 = \lambda \in x$, so $x \in \Fellin{\filters{\Lambda}}{\lambda}$.
    We show $x_{f_i(i)} \to x$ using \cref{lem:convergence}.
    Fix $\mu \in x$, so $\mu \preceq \lambda_0\lambda_1\dots\lambda_n$ for some $n$.
    Then, \cref{eq:fa_implies_compact_cylinders} says $\lambda_0\lambda_1\dots\lambda_n$ is eventually inside $(x_{f_i(i)})$ (for sufficiently large $i$).
    Since each $x_{f_i(i)}$ is hereditary, $\mu$ is eventually inside $(x_{f_i(i)})$, so \ref{Cin} holds.
    We show \ref{Cout} by contrapositive.
    Suppose $\mu \in \Lambda$ is not eventually outside $(x_{f_i(i)})$.
    That is, for all $I$, there is $i_I \geq I$ such that $\mu \in x_{f_{i_I}(i_I)}$.
    Thus, $\lambda, \mu \in x_{f_{i_I}(i_I)}$ for all $I$.
    Since $\lambda \in \FA{\Lambda}$, there is a finite $J \subseteq \Lambda$ such that $\lambda\Lambda \cap \mu\Lambda = \bigcup_{\nu \in J} \nu\Lambda$.
    Like earlier, because each $x_{f_{i_I}(i_I)}$ is directed, the pigeonhole principle implies there is a subsequence $(x_{f_{i_{I_k}(i_{I_k})}})$ and $\nu \in J$ such that $\nu \in x_{f_{i_{I_k}(i_{I_k})}} \cap \lambda\Lambda \cap \mu\Lambda$ for all $k$.
    In particular, $\nu \in \lambda\Lambda$, so $\nu = \mu_{n'}$ for some $n'$.
    Notice the truncated sequence $(x_{f_{i_{I_k}}(i_{I_k})})_{i_{I_k} \geq n'-1}$ is a subsequence of $(x_{f_{n'-1}(j)})$ and $\nu = \mu_{n'}$ is in every entry of $(x_{f_{i_{I_k}}(i_{I_k})})_{i_{I_k} \geq n'-1}$.
    Thus, according to the above construction, $\mu_{n'} \preceq \lambda_0\lambda_1\dots\lambda_{n'}$.
    Altogether we have $\mu \preceq \mu_{n'} \preceq \lambda_0\lambda_1\dots\lambda_{n'} \in x$, so $\mu \in x$.
    That is, \ref{Cout} holds, and so $x_{f_i(i)} \to x \in \Fellin{\filters{\Lambda}}{\lambda}$ by \cref{lem:convergence}.
\end{proof}

\begin{theorem}
    \label{thm:path_spaces}
    \hypotheses
    The path space $\PS{\Lambda}$ and the boundary-path space $\BPS{\Lambda}$ are Hausdorff and have countable bases consisting of compact sets. 
    Moreover, $\PS{\Lambda}$ is open in $\filters{\Lambda}$.
\end{theorem}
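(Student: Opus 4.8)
The plan is to reduce everything to facts already established: the topology of $\powerset{\Lambda}$ from \cref{lem:compact_Hausdorff_power_set}, the basis description in \cref{lem:basis_for_topology_on_filters}, the characterisation of $\PS{\Lambda}$ in \cref{lem:PS_characterisation}, and the compactness criterion in \cref{lem:cpt_char}. Since $\BPS{\Lambda} \subseteq \PS{\Lambda} \subseteq \filters{\Lambda} \subseteq \powerset{\Lambda}$, both $\PS{\Lambda}$ and $\BPS{\Lambda}$ are subspaces of $\powerset{\Lambda}$, hence Hausdorff and second-countable by \cref{lem:compact_Hausdorff_power_set} (both properties pass to subspaces). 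For openness of $\PS{\Lambda}$ in $\filters{\Lambda}$, I would observe that $x \in \PS{\Lambda}$ precisely when $\mu \in x$ for some $\mu \in \FA{\Lambda}$, i.e. when $x \in \Fellin{\filters{\Lambda}}{\mu}$ for some such $\mu$, so
\[
    \PS{\Lambda} = \bigcup_{\mu \in \FA{\Lambda}} \Fellin{\filters{\Lambda}}{\mu},
\]
a union of sets that are open (indeed clopen) in $\filters{\Lambda}$ by \cref{lem:basis_for_topology_on_filters}.

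The substantive claim is the countable basis of compact sets. I would take $\mathcal{B} \coloneqq \{\Fell{\filters{\Lambda}}{\mu}{K} \mid \mu \in \FA{\Lambda},\ K \subseteq \Lambda \text{ finite}\}$ and show it is a basis of $\PS{\Lambda}$, all of whose members are compact. Each member lies in $\PS{\Lambda}$ because a filter containing $\mu \in \FA{\Lambda}$ meets $\FA{\Lambda}$; and $\mathcal{B}$ is countable since $\Lambda$ is countable (so $\FA{\Lambda}$ and the collection of finite subsets of $\Lambda$ are countable). To see $\mathcal{B}$ is a basis, start from a basic neighbourhood $\Fell{\filters{\Lambda}}{\mu}{K}$ of a point $x \in \PS{\Lambda}$ provided by \cref{lem:basis_for_topology_on_filters} (so $\mu \in x$ and $x \cap K = \emptyset$): \cref{lem:PS_characterisation}\ref{it:PS_stronger} gives $\nu \in x \cap \FA{\Lambda} \cap \mu\Lambda$, whence $x \in \Fell{\filters{\Lambda}}{\nu}{K} \in \mathcal{B}$, and heredity of filters gives $\Fell{\filters{\Lambda}}{\nu}{K} \subseteq \Fell{\filters{\Lambda}}{\mu}{K}$. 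For compactness of a member $\Fell{\filters{\Lambda}}{\mu}{K}$ with $\mu \in \FA{\Lambda}$: it is contained in $\Fellin{\filters{\Lambda}}{\mu}$, which is compact by \cref{lem:cpt_char}, and it is closed in $\filters{\Lambda}$ (hence in $\Fellin{\filters{\Lambda}}{\mu}$) by \cref{lem:basis_for_topology_on_filters}, so it is a closed subspace of a compact space.

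For $\BPS{\Lambda}$, I would use that it is closed in $\PS{\Lambda}$ by its very definition as a closure. Then $\{B \cap \BPS{\Lambda} \mid B \in \mathcal{B}\}$ is a countable basis of $\BPS{\Lambda}$, and each $B \cap \BPS{\Lambda}$ is a closed subset of the compact set $B$ (here using $B \subseteq \PS{\Lambda}$), hence compact.

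I do not expect a real obstacle here: the genuinely hard input, the compactness of the cylinders $\Fellin{\filters{\Lambda}}{\lambda}$ for $\lambda \in \FA{\Lambda}$, is \cref{lem:cpt_char}, and \cref{lem:PS_characterisation} already packages the trick of sliding the base point of a cylinder into $\FA{\Lambda}$. The only step requiring a little care is verifying that the restricted family $\mathcal{B}$ is still a basis for $\PS{\Lambda}$, rather than merely a family of compact open subsets — and that is exactly the application of \cref{lem:PS_characterisation} above.
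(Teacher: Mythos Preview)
Your proposal is correct and follows essentially the same route as the paper: establish Hausdorffness by inheritance from $\powerset{\Lambda}$, refine the basis from \cref{lem:basis_for_topology_on_filters} using \cref{lem:PS_characterisation} to force the base point into $\FA{\Lambda}$, and then invoke \cref{lem:cpt_char} plus closedness of cylinders for compactness; openness of $\PS{\Lambda}$ and the passage to $\BPS{\Lambda}$ are handled identically. The only cosmetic difference is that you write the basic sets as $\Fell{\filters{\Lambda}}{\mu}{K}$ rather than $\Fell{\PS{\Lambda}}{\mu}{K}$, but you correctly observe these coincide once $\mu \in \FA{\Lambda}$.
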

\begin{proof}
    The space $\powerset{\Lambda}$ is Hausdorff, and $\PS{\Lambda}$ is a subspace.
    Recall from \cref{eq:subspace_cylinders} that we define $\Fell{\PS{\Lambda}}{\mu'}{K} = \Fell{}{\mu'}{K} \cap \PS{\Lambda}$ for each $\mu' \in \Lambda$ and finite $K \subseteq \Lambda$.
    By \cref{lem:basis_for_topology_on_filters}, the collection of $\Fell{\PS{\Lambda}}{\mu'}{K}$, where $\mu' \in \Lambda$ and $K \subseteq \Lambda$ is finite, is a basis for the topology on $\PS{\Lambda}$.
    For any $x \in \Fell{\PS{\Lambda}}{\mu'}{K}$, there is $\mu \in x \cap \mu'\Lambda \cap \FA{\Lambda}$ by \cref{lem:PS_characterisation}.
    Then, $x \in \Fell{\PS{\Lambda}}{\mu}{K} \subseteq \Fell{\PS{\Lambda}}{\mu'}{K}$.
    Hence, the collection of $\Fell{\PS{\Lambda}}{\mu}{K}$, where $\mu \in \FA{\Lambda}$ and $K \subseteq \Lambda$ is finite, is a basis for the topology on $\PS{\Lambda}$.
    This basis is countable because $\Lambda$ is countable.
    For each $\mu \in \FA{\Lambda}$ and finite $K \subseteq \Lambda$, $\Fellin{\PS{\Lambda}}{\mu}$ is compact by \cref{lem:cpt_char} and $\Fell{\PS{\Lambda}}{\mu}{K}$ is a closed subset of $\Fellin{\PS{\Lambda}}{\mu}$, so $\Fell{\PS{\Lambda}}{\mu}{K}$ is compact.
    The boundary-path space $\BPS{\Lambda}$ also has the stated properties because $\BPS{\Lambda}$ is a closed subspace of $\PS{\Lambda}$.
    Also, $\PS{\Lambda}$ is open in $\filters{\Lambda}$ because $\PS{\Lambda}$ equals the union of $\Fellin{\filters{\Lambda}}{\mu}$ for which $\mu \in \FA{\Lambda}$.
\end{proof}

\section{Path groupoids}
    \label{sec:path_groupoid}

In \cite[\S 6]{RW17}, Renault and Williams construct actions of semigroups $P$ on \textit{$(\r, \d)$-proper topological} $P$-graphs. 
We only consider discrete $P$-graphs here, for which 
\[
    \text{$(\r, \d)$-proper} \iff \text{row-finite} \implies \text{finitely aligned},
\]
as per \cite[\S 5.1]{Jon24}. 
Thus, in the discrete setting, we generalise the semigroup action of \cite{RW17} to any $P$-graph $\Lambda$ with $\FA{\Lambda} \neq \emptyset$. 
Then, we define the path groupoid $\PG{\Lambda}$ to be the semidirect product groupoid associated to the action as in \cref{sec:semidirect_product_groupoid_prelims}.

\subsection{Shift maps on the path space}

By \cite[Lemma 3.4]{BSV13}, the maps 
\[
    \Fellin{\filters{\Lambda}}{\lambda} 
    \to \Fellin{\filters{\Lambda}}{\s(\lambda)},
    \quad 
    x 
    \mapsto \shiftoff{\lambda}{x} 
    \coloneqq \setof{\mu \in \Lambda}{\lambda\mu \in x}
\]
and 
\[
    \Fellin{\filters{\Lambda}}{\s(\lambda)} 
    \to \Fellin{\filters{\Lambda}}{\lambda},
    \quad 
    x 
    \mapsto \shifton{\lambda}{x} 
    \coloneqq \setof{\zeta \in \Lambda}{\zeta \preceq \lambda\mu \text{ for some } \mu \in x}
\]
are well-defined. 
We call them the \define{left-shift map} and \define{right-shift map} associated to $\lambda$, respectively.
The notation ``$\shiftoff{\lambda}{x}$'' mimics the operations defined in \cite[Lemma 6.7(a)--(b)]{RW17} and differs from \cite{BSV13}, where ``$\shiftoff{\lambda}{x}$'' is denoted by ``$\lambda^* \cdot x$''.

\begin{lemma}
    \label{lem:action_property}
    Let $(Q,P)$ be a weakly quasi-lattice ordered group, and let $\Lambda$ be a $P$-graph.
    For any $(\lambda, \mu) \in \composablepairs{\Lambda}$ and $x \in \filters{\Lambda}$:
    \begin{enumerate}
        \item\label{it:action_left} if $\lambda\mu \in x$, then $\shiftoff{\mu}{(\shiftoff{\lambda}{x})} = \shiftoff{(\lambda\mu)}{x}$; and
        \item\label{it:action_right} if $\s(\mu) = \r(x)$, then $\shifton{\lambda}{(\shifton{\mu}{x})} = \shifton{(\lambda\mu)}{x}$.
    \end{enumerate}
\end{lemma}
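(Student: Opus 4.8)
The plan is to prove both identities by unwinding the definitions of the shift maps and using left cancellation in $\Lambda$, which holds since $\Lambda$ is a category of paths. Both statements are essentially bookkeeping: I need to check that the set memberships defining the two sides agree, using only the factorisation structure of $\Lambda$. I would first observe that the hypotheses are exactly what is needed for the relevant shift maps to be defined: for part \ref{it:action_left}, $\lambda\mu \in x$ forces $x \in \Fellin{\filters{\Lambda}}{\lambda}$ (by heredity, since $\lambda \preceq \lambda\mu$), so $\shiftoff{\lambda}{x}$ makes sense, and moreover $\mu = \shiftoff{\lambda}{(\lambda\mu)}$'s defining condition is met so $\shiftoff{\lambda}{x} \in \Fellin{\filters{\Lambda}}{\mu}$; similarly for part \ref{it:action_right}, $\s(\mu) = \r(x)$ ensures $\shifton{\mu}{x}$ is defined with $\r(\shifton{\mu}{x}) = \r(\mu) = \s(\lambda)$, so $\shifton{\lambda}{(\shifton{\mu}{x})}$ is defined.

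For \ref{it:action_left}, I would compute directly: $\nu \in \shiftoff{\mu}{(\shiftoff{\lambda}{x})}$ iff $\mu\nu \in \shiftoff{\lambda}{x}$ iff $\lambda\mu\nu \in x$ (this last step uses that $(\lambda, \mu\nu)$ is composable, which follows from $(\lambda,\mu)$ composable and $\mu\nu$ making sense), and $\nu \in \shiftoff{(\lambda\mu)}{x}$ iff $(\lambda\mu)\nu \in x$. Since $(\lambda\mu)\nu = \lambda(\mu\nu)$ by associativity, the two conditions coincide. The only subtlety is matching up when each expression is legitimately composable, which is straightforward from the composability hypotheses.

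For \ref{it:action_right}, I would similarly chase the definition: $\zeta \in \shifton{\lambda}{(\shifton{\mu}{x})}$ iff $\zeta \preceq \lambda\eta$ for some $\eta \in \shifton{\mu}{x}$, i.e. iff $\zeta \preceq \lambda\eta$ where $\eta \preceq \mu\kappa$ for some $\kappa \in x$. Using left invariance of $\preceq$ under left multiplication (and the fact that $\lambda\eta \preceq \lambda\mu\kappa$ whenever $\eta \preceq \mu\kappa$, which follows since left multiplication by $\lambda$ preserves $\preceq$ — this is the statement that $\mu\nu \preceq \mu\kappa$ when $\nu \preceq \kappa$, applied in the reverse direction), I get $\zeta \preceq \lambda\mu\kappa = (\lambda\mu)\kappa$ for some $\kappa \in x$, which is precisely $\zeta \in \shifton{(\lambda\mu)}{x}$. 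Conversely, if $\zeta \preceq (\lambda\mu)\kappa$ with $\kappa \in x$, then taking $\eta \coloneqq \mu\kappa$ we have $\eta \in \shifton{\mu}{x}$ (witnessed by $\kappa$, using $\mu\kappa \preceq \mu\kappa$) and $\zeta \preceq \lambda\eta$, so $\zeta \in \shifton{\lambda}{(\shifton{\mu}{x})}$.

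The main obstacle, such as it is, is purely notational: keeping careful track of when a product is defined (source–range matching) so that every intermediate expression lies in the correct cylinder set where the relevant shift map is defined, and making sure the equivalences "$\zeta \in \shifton{\lambda}{y}$ iff $\zeta \preceq \lambda\eta$ for some $\eta \in y$" are applied to filters $y$ with the correct range. I expect no genuine mathematical difficulty, since everything reduces to associativity in $\Lambda$ together with the already-cited fact (from \cite[Lemma 3.4]{BSV13}, recalled just above) that these shift maps are well-defined bijections between the indicated cylinders.
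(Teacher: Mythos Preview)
Your proposal is correct and follows essentially the same route as the paper: part \ref{it:action_left} is dispatched by associativity of composition, and part \ref{it:action_right} is the same element-chase the paper gives (forward via transitivity of $\preceq$ together with the easy fact that $\eta \preceq \mu\xi$ implies $\lambda\eta \preceq \lambda\mu\xi$, backward via reflexivity with the witness $\eta \coloneqq \mu\kappa$). The only minor point is terminological: what you invoke is that left multiplication \emph{preserves} $\preceq$, which is immediate and does not require left cancellation---the paper's ``left invariance'' is the converse implication, which you do not actually need here.
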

\begin{proof}
    \ref{it:action_left} \quad
    This follows from the associativity of composition in $\Lambda$.

    \ref{it:action_right} \quad
    If $\zeta \in \shifton{\lambda}{(\shifton{\mu}{x})}$, then there is $\eta \in \shifton{\mu}{x}$ such that $\zeta \preceq \lambda\eta$ and there is $\xi \in x$ such that $\eta \preceq \mu\xi$. 
    It follows from $\eta \preceq \mu\xi$ that $\lambda\eta \preceq \lambda\mu\xi$, and so $\zeta \preceq \lambda\mu\xi$ by the transitivity of $\preceq$. 
    Hence, $\zeta \in \shifton{(\lambda\mu)}{x}$.
    The other inclusion is similar but uses the reflexivity of $\preceq$.
\end{proof}

Though finite alignment is assumed in the statement of the following result from \cite{BSV13}, it is not used in their proof, so we state the result in greater generality here.

\begin{lemma}[Lemma 3.4 of \cite{BSV13}]
    \label{lem:filters_under_shifts}
    Let $(Q,P)$ be a weakly quasi-lattice ordered group, and let $\Lambda$ be a $P$-graph.
    If $\mu \in x \in \filters{\Lambda}$ and $(\lambda, \mu) \in \composablepairs{\Lambda}$, then:
    \begin{enumerate}
        \item \label{it:inversion} $\shifton{\mu}{(\shiftoff{\mu}{x})} = x$ and $\shiftoff{\lambda}{(\shifton{\lambda}{x})} = x$; and
        \item \label{it:ultrafilters_closed_under_shifts} if $x \in \ultrafilters{\Lambda}$, then $\shiftoff{\mu}{x}, \shifton{\lambda}{x} \in \ultrafilters{\Lambda}$.
    \end{enumerate}
\end{lemma}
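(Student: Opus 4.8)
The plan is to establish \ref{it:inversion} by unwinding the definitions of the two shift maps, and then to deduce \ref{it:ultrafilters_closed_under_shifts} from \ref{it:inversion} by pulling a hypothetical enlargement back through the (inverse) shift. Two preliminary observations streamline everything. First, since $\r(\mu) \preceq \mu \in x$ and filters are hereditary, $\r(\mu) \in x$, so $\r(\mu)$ is \emph{the} unit $\r(x)$ of $x$; combined with $(\lambda, \mu) \in \composablepairs{\Lambda}$ this gives $\r(x) = \s(\lambda)$, so $\shifton{\lambda}{x}$ is defined. Second, by the same heredity argument $\r(\nu) = \r(x)$ for every $\nu \in x$, so the composability conditions needed below hold automatically.

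For \ref{it:inversion}, consider $\shifton{\mu}{(\shiftoff{\mu}{x})}$. If $\zeta$ lies in this set then $\zeta \preceq \mu\nu$ for some $\nu$ with $\mu\nu \in x$, so $\zeta \in x$ by heredity; conversely, given $\zeta \in x$, directedness of $x$ produces $\kappa \in x$ with $\mu, \zeta \preceq \kappa$, and writing $\kappa = \mu\nu$ (using $\mu \preceq \kappa$) exhibits $\nu \in \shiftoff{\mu}{x}$ with $\zeta \preceq \mu\nu$. For the second identity, if $\lambda\nu \in \shifton{\lambda}{x}$ then $\lambda\nu \preceq \lambda\xi$ for some $\xi \in x$, and left invariance of $\preceq$ forces $\nu \preceq \xi$, hence $\nu \in x$; conversely, for $\nu \in x$ reflexivity gives $\lambda\nu \preceq \lambda\nu$, so $\lambda\nu \in \shifton{\lambda}{x}$, i.e. $\nu \in \shiftoff{\lambda}{(\shifton{\lambda}{x})}$. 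These are the usual hereditary/directed/left-cancellative manipulations.

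For \ref{it:ultrafilters_closed_under_shifts}, recall that $\shiftoff{\mu}{x}$ and $\shifton{\lambda}{x}$ are already filters by the well-definedness statement recalled before the lemma. Suppose $y \in \filters{\Lambda}$ with $\shiftoff{\mu}{x} \subseteq y$. Since $\mu\s(\mu) = \mu \in x$, we get $\s(\mu) \in \shiftoff{\mu}{x} \subseteq y$, hence $\r(y) = \s(\mu)$ and $\shifton{\mu}{y}$ is defined. Running the ``converse'' direction from the previous paragraph with $y$ replacing $\shiftoff{\mu}{x}$ shows $x \subseteq \shifton{\mu}{y}$; as $x$ is an ultrafilter, $x = \shifton{\mu}{y}$, and then the second identity of \ref{it:inversion} (applied to $y$ with distinguished edge $\mu$, legitimate because $\r(y) = \s(\mu)$) gives $\shiftoff{\mu}{x} = \shiftoff{\mu}{(\shifton{\mu}{y})} = y$. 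Symmetrically, if $\shifton{\lambda}{x} \subseteq z$ then, since $\s(\lambda) = \r(x) \in x$, we have $\lambda = \lambda\s(\lambda) \in \shifton{\lambda}{x} \subseteq z$, so $\shiftoff{\lambda}{z}$ is defined; for every $\nu \in x$ one has $\lambda\nu \in \shifton{\lambda}{x} \subseteq z$, so $\nu \in \shiftoff{\lambda}{z}$, giving $x \subseteq \shiftoff{\lambda}{z}$, hence $x = \shiftoff{\lambda}{z}$ by maximality, and the first identity of \ref{it:inversion} yields $\shifton{\lambda}{x} = \shifton{\lambda}{(\shiftoff{\lambda}{z})} = z$. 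The one point requiring care --- and the only real obstacle --- is the bookkeeping in this last part: checking that the hypotheses of \ref{it:inversion} survive the relabelling (which is exactly the range computations of the first paragraph) and that the pulled-back filters $\shifton{\mu}{y}$ and $\shiftoff{\lambda}{z}$ genuinely contain $x$, which is what lets maximality of $x$ close the argument.
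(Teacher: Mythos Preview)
Your proof is correct. The paper does not give its own proof of this lemma: it simply cites \cite[Lemma~3.4]{BSV13} and remarks that the finite-alignment hypothesis present there is not actually used in that proof. Your direct argument --- verifying \ref{it:inversion} by unwinding the definitions using heredity, directedness, and left invariance of $\preceq$, then deducing \ref{it:ultrafilters_closed_under_shifts} by pulling a hypothetical enlargement back through the inverse shift and invoking maximality --- is exactly the standard verification one would expect, and your bookkeeping on ranges (ensuring $\r(y)=\s(\mu)$ and $\lambda\in z$ so that the relevant shifts and identities from \ref{it:inversion} apply to the new filters) is handled correctly.
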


Left-shift maps are continuous bijections even when the $P$-graph is not finitely aligned:

\begin{lemma}\label{lem:left_shift_cts_bijection}
    Let $(Q,P)$ be a weakly quasi-lattice ordered group, and let $\Lambda$ be a $P$-graph.
    For each $(\lambda, \lambda') \in \composablepairs{\Lambda}$, the left-shift map associated to $\lambda$ is a continuous bijection whose inverse is the right-shift map, and $\shiftoff{\lambda}{(\Fellin{\filters{\Lambda}}{\lambda\lambda'})} = \Fellin{\filters{\Lambda}}{\lambda'}$.
\end{lemma}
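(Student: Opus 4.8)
The plan is to verify the three assertions about the left-shift map $\rpamap{\lambda}^{-1}$-style claims in turn, relying on \cref{lem:filters_under_shifts} for the bijectivity and on \cref{lem:closed_and_continuous} (sequential characterisation of continuity) for continuity. First I would unpack what needs to be shown: the left-shift map $x \mapsto \shiftoff{\lambda}{x}$ from $\Fellin{\filters{\Lambda}}{\lambda}$ to $\Fellin{\filters{\Lambda}}{\s(\lambda)}$ is continuous, is a bijection, has the right-shift map $x \mapsto \shifton{\lambda}{x}$ as its (two-sided) inverse, and restricts to a bijection from $\Fellin{\filters{\Lambda}}{\lambda\lambda'}$ onto $\Fellin{\filters{\Lambda}}{\lambda'}$.

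For bijectivity and the identification of the inverse, the work is essentially already done by \cref{lem:filters_under_shifts}\ref{it:inversion}: for any $x \in \Fellin{\filters{\Lambda}}{\lambda}$ we have $\lambda \in x$, hence (taking $\mu = \lambda$ there) $\shifton{\lambda}{(\shiftoff{\lambda}{x})} = x$; and for any $y \in \Fellin{\filters{\Lambda}}{\s(\lambda)}$ we have $\s(\lambda) = \r(\lambda) $ ... more precisely $\s(\lambda) \in y$ so the codomain condition $(\lambda, \s(\lambda)) \in \composablepairs{\Lambda}$ applies and $\shiftoff{\lambda}{(\shifton{\lambda}{y})} = y$. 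So the two shift maps are mutually inverse on these cylinders. For the image statement, $\shiftoff{\lambda}{(\Fellin{\filters{\Lambda}}{\lambda\lambda'})} = \Fellin{\filters{\Lambda}}{\lambda'}$: if $\lambda\lambda' \in x$ then $\lambda' \in \shiftoff{\lambda}{x}$ directly from the definition $\shiftoff{\lambda}{x} = \{\mu \mid \lambda\mu \in x\}$, giving ``$\subseteq$''; conversely, given $y \in \Fellin{\filters{\Lambda}}{\lambda'}$, the filter $x \coloneqq \shifton{\lambda}{y}$ lies in $\Fellin{\filters{\Lambda}}{\lambda}$ and satisfies $\shiftoff{\lambda}{x} = y$, and $\lambda\lambda' \in x$ because $\lambda' \in y$ forces $\lambda\lambda' \preceq \lambda\lambda'$ with $\lambda' \in y$, so $\lambda\lambda' \in \shifton{\lambda}{y}$; this gives ``$\supseteq$''.

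The genuinely substantive part is continuity of the left-shift map, and I expect this to be the main obstacle (though a mild one). Using \cref{lem:closed_and_continuous}\ref{lem:closed_and_continuous}, it suffices to show that $x_n \to x$ in $\Fellin{\filters{\Lambda}}{\lambda}$ implies $\shiftoff{\lambda}{x_n} \to \shiftoff{\lambda}{x}$, and by \cref{lem:convergence} this reduces to checking conditions \ref{Cin} and \ref{Cout} pointwise: for $\mu \in \Lambda$, one has $\mu \in \shiftoff{\lambda}{x}$ iff $\lambda\mu \in x$, and since $\lambda\mu \in x \Leftrightarrow \lambda\mu$ is eventually inside $(x_n)$ (by \ref{Cin} applied in $\Fellin{\filters{\Lambda}}{\lambda}$), and $\lambda\mu \in x_n \Leftrightarrow \mu \in \shiftoff{\lambda}{x_n}$, condition \ref{Cin} for the shifted sequence follows; \ref{Cout} is symmetric, using that $\mu \notin \shiftoff{\lambda}{x} \Leftrightarrow \lambda\mu \notin x$ together with \ref{Cout} for $(x_n)$. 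The one subtlety to handle carefully is that $\shiftoff{\lambda}{x_n}$ is only defined when $\lambda \in x_n$, which holds for all $n$ since $(x_n) \subseteq \Fellin{\filters{\Lambda}}{\lambda}$, so the sequence $(\shiftoff{\lambda}{x_n})$ genuinely lives in $\Fellin{\filters{\Lambda}}{\s(\lambda)}$ and the argument is well-posed. Continuity of the right-shift map (the inverse) is not needed for the statement, but if desired it follows by the same sequential argument using the definition of $\shifton{\lambda}{\cdot}$.
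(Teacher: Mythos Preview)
Your proof is correct and follows the same approach as the paper: bijectivity via \cref{lem:filters_under_shifts}\ref{it:inversion}, the image identity directly from the definitions, and continuity by the sequential criterion of \cref{lem:convergence}. One caution on your closing aside: the right-shift map is \emph{not} continuous in general (see \cref{ex:left_shift_not_open}), and the ``same sequential argument'' breaks down for condition \ref{Cout}, since $\zeta \notin \shifton{\lambda}{y}$ is not equivalent to any single membership condition $\lambda\mu \notin y$ that could be transported along the convergent sequence.
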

\begin{proof}
    For injectivity, suppose $x,x' \in \Fellin{\filters{\Lambda}}{\lambda}$ satisfy $\shiftoff{\lambda}{x} = \shiftoff{\lambda}{x'}$.
    \cref{lem:filters_under_shifts}\ref{it:inversion} implies $x = \shifton{\lambda}{(\shiftoff{\lambda}{x})} = \shifton{\lambda}{(\shiftoff{\lambda}{x'})} = x'$.
    For surjectivity, fix $y \in \Fellin{\filters{\Lambda}}{\s(\lambda)}$.
    Notice $\shifton{\lambda}{y} \in \Fellin{\filters{\Lambda}}{\lambda}$, and $\shiftoff{\lambda}{(\shifton{\lambda}{y})} = y$ by \cref{lem:filters_under_shifts}\ref{it:inversion}.
    Consequently, the right-shift map is the inverse of the left-shift map.
    Also, note that $\Fellin{\filters{\Lambda}}{\lambda\lambda'}$ is a subset of $\Fellin{\filters{\Lambda}}{\lambda}$ because filters are hereditary.
    For any $x \in \Fellin{\filters{\Lambda}}{\lambda\lambda'}$, $\lambda\lambda' \in x$, so $\lambda' \in \shiftoff{\lambda}{x}$.
    Hence, $\shiftoff{\lambda}{x} \in \Fellin{\filters{\Lambda}}{\lambda'}$.
    That is, $\shiftoff{\lambda}{(\Fellin{\filters{\Lambda}}{\lambda\lambda'})} \subseteq \Fellin{\filters{\Lambda}}{\lambda'}$.
    The surjectivity argument can be modified to show $\shiftoff{\lambda}{(\Fellin{\filters{\Lambda}}{\lambda\lambda'})} \supseteq \Fellin{\filters{\Lambda}}{\lambda'}$.
    To see the left-shift map is continuous, suppose $(x_n)$ converges to $x$ in $\Fellin{\filters{\Lambda}}{\lambda}$.
    To show $(\shiftoff{\lambda}{x_n})$ converges to $\shiftoff{\lambda}{x}$, we use \cref{lem:convergence}.
    Suppose $\mu \notin \shiftoff{\lambda}{x}$, so $\lambda\mu \notin x$.
    Since \ref{Cout} holds for $(x_n)$ and $x$, $\lambda\mu$ is eventually outside $(x_n)$, and so $\mu$ is eventually outside $(\shiftoff{\lambda}{x_n})$, so \ref{Cout} holds for $(\shiftoff{\lambda}{x_n})$ and $\shiftoff{\lambda}{x}$.
    A similar argument shows \ref{Cin} holds.
\end{proof}

However, left-shift maps can fail to be open:

\begin{example}
    \label{ex:left_shift_not_open}
    Recall $\tg$ from \cref{ex:tg}.
    We write $\principal{\kappa} = \setof{\nu \in \tg}{\nu \preceq \kappa}$ for each $\kappa \in \tg$.
    \cref{lem:convergence} can be used to show $\principal{\alpha_n} \to \{w\}$ (because $w$ is the only element of $\tg$ that is eventually inside $(\principal{\alpha_n})$) but $\shifton{\lambda}{\principal{\alpha_n}} = \{v, \mu, \lambda, \lambda\alpha_n\} \to \{v, \mu, \lambda\} \neq \{v, \lambda\} = \shifton{\lambda}{\{w\}}$.
    Hence, the right-shift map associated to $\lambda$ is not continuous, and the right-shift map is the inverse of the left-shift map by \cref{lem:left_shift_cts_bijection}, so the left-shift map is not open.
\end{example}

We now turn to the path space $\PS{\Lambda}$. 
We will assume $\FA{\Lambda} \neq \emptyset$ so that the path space is nonempty. 
We first note that the path space is closed under left-shift maps:

\begin{lemma}
    \label{lem:PS_closed_under_left_shift}
    \hypotheses
    If $\lambda \in x \in \PS{\Lambda}$, then $\shiftoff{\lambda}{x} \in \PS{\Lambda}$.
\end{lemma}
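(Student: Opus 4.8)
The plan is to reduce this to \cref{lem:PS_characterisation} together with \cref{lem:finite_alignment}\ref{it:finite_alignment_closed_under_final_segments}. First note that since $\lambda \in x \in \PS{\Lambda} \subseteq \filters{\Lambda}$, the filter $x$ lies in $\Fellin{\filters{\Lambda}}{\lambda}$, so the left-shift map associated to $\lambda$ is defined at $x$ and $\shiftoff{\lambda}{x} \in \filters{\Lambda}$ (by \cite[Lemma 3.4]{BSV13}, recalled before \cref{lem:action_property}). Thus it remains only to check $\shiftoff{\lambda}{x} \cap \FA{\Lambda} \neq \emptyset$.

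To produce an element of $\shiftoff{\lambda}{x} \cap \FA{\Lambda}$, apply \cref{lem:PS_characterisation} to $x \in \PS{\Lambda}$ at the element $\lambda \in x$: this gives $\mu \in x \cap \FA{\Lambda} \cap \lambda\Lambda$. Write $\mu = \lambda\delta$ for some $\delta \in \Lambda$ with $\s(\lambda) = \r(\delta)$. Since $\lambda\delta = \mu \in \FA{\Lambda}$, \cref{lem:finite_alignment}\ref{it:finite_alignment_closed_under_final_segments} yields $\delta \in \FA{\Lambda}$. On the other hand, $\lambda\delta = \mu \in x$, so by the definition $\shiftoff{\lambda}{x} = \setof{\nu \in \Lambda}{\lambda\nu \in x}$ we get $\delta \in \shiftoff{\lambda}{x}$. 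Hence $\delta \in \shiftoff{\lambda}{x} \cap \FA{\Lambda}$, so $\shiftoff{\lambda}{x} \in \PS{\Lambda}$.

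This argument is essentially immediate once the earlier lemmas are in hand, so I do not anticipate any real obstacle; the only point requiring a moment's care is checking that $\shiftoff{\lambda}{x}$ is genuinely a filter (rather than just a subset of $\Lambda$), which is covered by the cited result from \cite{BSV13}. One could also phrase the conclusion via \cref{lem:PS_characterisation} in the reverse direction, but producing the single witness $\delta$ is the cleanest route.
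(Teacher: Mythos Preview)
Your proof is correct and essentially identical to the paper's: both apply \cref{lem:PS_characterisation} to obtain an element $\lambda\lambda' \in x \cap \FA{\Lambda} \cap \lambda\Lambda$ (your $\mu = \lambda\delta$), then invoke \cref{lem:finite_alignment}\ref{it:finite_alignment_closed_under_final_segments} to conclude $\lambda' \in \FA{\Lambda} \cap \shiftoff{\lambda}{x}$. The only difference is that you spell out why $\shiftoff{\lambda}{x}$ is a filter, which the paper leaves implicit.
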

\begin{proof}
    Recall from \cref{lem:PS_characterisation} that $\lambda \in x \in \PS{\Lambda}$ implies there is some $\lambda\lambda' \in x \cap \FA{\Lambda} \cap \lambda\Lambda \neq \emptyset$.
    Then, \cref{lem:finite_alignment}\ref{it:finite_alignment_closed_under_final_segments} implies $\lambda' \in \FA{\Lambda} \cap \shiftoff{\lambda}{x}$, so 
    $\shiftoff{\lambda}{x} \in \PS{\Lambda}$.
\end{proof}

However, the path space can fail to be closed under right-shift maps:

\begin{example}
    \label{ex:PS_right_shift_not_well_defined}
    In $\tg$ from \cref{ex:tg}, $w \in \FA{\tg}$, so $\{w\} \in \PS{\tg}$. 
    However, $\shifton{\lambda}{\{w\}} = \{v, \lambda\} \subseteq \tg \setminus \FA{\tg}$, so $\shifton{\lambda}{\{w\}} \notin \PS{\tg}$.
\end{example}

\subsection{Semigroup action on the path space}

We use the above shift maps to define a locally compact and directed semigroup action $T$ of $P$ on $\PS{\Lambda}$. 
The reader can recall preliminaries on semigroup actions from \cref{sec:semigroup_action_prelims}. 
We use one additional notation.

\begin{notation}
    \label{not:path_from_to}
    Recall that we write $\Lambda^m = \d^{-1}(m)$, for each $m \in P$.
    For each $x \in \filters{\Lambda}$ and $m \in P$, if $x \cap \Lambda^m$ is nonempty, then there is precisely one element $\pathfromto{x}{e}{m}$ in $x \cap \Lambda^m$ by the injectivity $\d|_x$.
    Then, the left-shift map associated to $\pathfromto{x}{e}{m}$ maps $x$ to $\shiftoff{\pathfromto{x}{e}{m}}{x}$.
\end{notation}

\begin{definition}
    \label{def:semigroup_action_on_path_space}
    \hypotheses 
    Let
    \(
        \PS{\Lambda} * P 
        \coloneqq 
        \setof{
            (x,m) \in \PS{\Lambda} \times P
        }{
            x \cap \Lambda^m \neq \emptyset
        },
    \)
    and define $T \colon \PS{\Lambda} * P \to \PS{\Lambda}$ to map each $(x,m) \in \PS{\Lambda} * P$ to $\rpa{x}{m} \coloneqq \shiftoff{\pathfromto{x}{e}{m}}{x}$.
\end{definition}

\begin{proposition}\label{prop:PS_semigroup_action}
    \hypotheses 
    Then, $(\PS{\Lambda},P,T)$ is a semigroup action.
\end{proposition}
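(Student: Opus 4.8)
The plan is to verify conditions \ref{S1} and \ref{S2} directly from \cref{def:semigroup_action_on_path_space}, using the shift-map lemmas already established. For \ref{S1}, given $x \in \PS{\Lambda}$, note that $x \cap \Lambda^e$ is nonempty since $\r(x) \in x \cap \Lambda^e$, so $(x,e) \in \PS{\Lambda} * P$; moreover $\pathfromto{x}{e}{e} = \r(x)$, and $\shiftoff{\r(x)}{x} = x$ because $\r(x)$ is a unit (this is immediate from the definition of the left-shift map, or from \cref{lem:filters_under_shifts}\ref{it:inversion} applied with $\mu = \r(x)$). The only subtlety here is confirming that $\rpa{x}{e}$ lands back in $\PS{\Lambda}$, which is guaranteed by \cref{lem:PS_closed_under_left_shift}.

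The substance is in \ref{S2}. Fix $(x,m,n) \in \PS{\Lambda} \times P \times P$. First suppose $(x,mn) \in \PS{\Lambda} * P$, so there is a (unique) $\lambda \in x \cap \Lambda^{mn}$. By the unique factorisation property, write $\lambda = \mu\nu$ with $\d(\mu) = m$, $\d(\nu) = n$; since $x$ is hereditary and $\mu \preceq \lambda \in x$, we get $\mu \in x$, so $\mu = \pathfromto{x}{e}{m}$ and $(x,m) \in \PS{\Lambda} * P$. Then $\rpa{x}{m} = \shiftoff{\mu}{x}$, and $\nu \in \shiftoff{\mu}{x}$ because $\mu\nu = \lambda \in x$; since $\d(\nu) = n$, this shows $(\rpa{x}{m}, n) \in \PS{\Lambda} * P$ with $\pathfromto{(\shiftoff{\mu}{x})}{e}{n} = \nu$. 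Conversely, suppose $(x,m) \in \PS{\Lambda} * P$ and $(\rpa{x}{m},n) \in \PS{\Lambda} * P$; let $\mu = \pathfromto{x}{e}{m}$ and let $\nu = \pathfromto{(\shiftoff{\mu}{x})}{e}{n}$, so $\nu \in \shiftoff{\mu}{x}$, i.e. $\mu\nu \in x$; then $\mu\nu \in x \cap \Lambda^{mn}$, giving $(x,mn) \in \PS{\Lambda} * P$, and $\mu\nu = \pathfromto{x}{e}{mn}$ by uniqueness. In either case $\pathfromto{x}{e}{mn} = \pathfromto{x}{e}{m}\cdot\pathfromto{(\rpa{x}{m})}{e}{n}$, so
\[
    \rpa{x}{(mn)} = \shiftoff{(\mu\nu)}{x} = \shiftoff{\nu}{(\shiftoff{\mu}{x})} = \rpa{(\rpa{x}{m})}{n},
\]
where the middle equality is \cref{lem:action_property}\ref{it:action_left} (applicable since $\mu\nu \in x$). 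Throughout, \cref{lem:PS_closed_under_left_shift} ensures each intermediate shift stays inside $\PS{\Lambda}$, so all the expressions are legitimate elements of $\PS{\Lambda}$.

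I expect no serious obstacle: the argument is essentially a bookkeeping exercise matching the unique factorisation property of $\Lambda$ against the semigroup-action axioms, with \cref{lem:action_property}\ref{it:action_left} doing the one nontrivial identity and \cref{lem:PS_closed_under_left_shift} keeping everything in the path space. The only point requiring a little care is being precise about which element of $x$ realises $\pathfromto{x}{e}{m}$ at each stage and invoking the injectivity of $\d|_x$ (recorded in \cref{not:path_from_to}) to pin down uniqueness of these factorisations.
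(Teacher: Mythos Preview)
Your proposal is correct and follows essentially the same argument as the paper's proof: well-definedness of the codomain via \cref{lem:PS_closed_under_left_shift}, \ref{S1} via $\r(x) \in x \cap \Lambda^e$ and $\shiftoff{\r(x)}{x} = x$, and \ref{S2} by matching the unique factorisation property against the two directions of the biconditional, with the final identity coming from \cref{lem:action_property}\ref{it:action_left}. If anything, your write-up is slightly more explicit about the uniqueness of $\pathfromto{x}{e}{m}$ at each stage, but the structure and ingredients are identical.
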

\begin{proof}
    The function $T$ is well-defined with the codomain $\PS{\Lambda}$ by \cref{lem:PS_closed_under_left_shift}.
    We check \ref{S1} and \ref{S2} of the definition of a semigroup action. 
    For each $x \in \PS{\Lambda}$, $\r(x) \in x \cap \unitspace{\Lambda} = \Lambda^e$. 
    Thus, $\rpa{x}{e} = \shiftoff{\r(x)}{x} = x$, so \ref{S1} holds.

    Now suppose $(x,mn) \in \PS{\Lambda} * P$.
    Since $\d(\pathfromto{x}{e}{mn}) = mn$ and $\Lambda$ has the unique factorisation property, there are unique $\mu\in\Lambda^m$ and $\nu\in\Lambda^n$ such that $\pathfromto{x}{e}{mn} = \mu\nu$.
    Since $x$ is hereditary, $\mu \in x \cap \Lambda^m$, so $(x,m) \in \PS{\Lambda} * P$.
    Notice $\rpa{x}{m} = \shiftoff{\mu}{x}$.
    Because $\mu\nu \in x$, we have $\nu \in (\rpa{x}{m}) \cap \Lambda^n$.
    Thus, $(\rpa{x}{m},n) \in \PS{\Lambda} * P$.
    That is, $(x,mn) \in \PS{\Lambda} * P$ implies both $(x,m) \in \PS{\Lambda} * P$ and $(\rpa{x}{m},n) \in \PS{\Lambda} * P$.
    For the other direction, suppose $(x,m) \in \PS{\Lambda} * P$ and $(\rpa{x}{m},n) \in \PS{\Lambda} * P$.
    Say $\mu \in x \cap \Lambda^m$ and $\nu \in (\rpa{x}{m}) \cap \Lambda^n$.
    It follows that $\mu\nu \in x \cap \Lambda^{mn}$, so $(x,mn) \in \PS{\Lambda} * P$.
    Using \cref{lem:action_property}, we can compute
    \(
        \rpa{(\rpa{x}{m})}{n} 
        = \shiftoff{\nu}{(\shiftoff{\mu}{x})} 
        = \shiftoff{(\mu\nu)}{x} 
        = \rpa{x}{(mn)},
    \)
    so \ref{S2} holds.
\end{proof}

The main result using the weak quasi-lattice order property of $(Q,P)$ is \cref{prop:PS_semigroup_action_directed}.

\begin{proposition}
    \label{prop:PS_semigroup_action_directed}
    \hypotheses 
    The semigroup action $(\PS{\Lambda},P,T)$ is directed.
\end{proposition}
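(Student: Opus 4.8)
The plan is to unwind the definition of a directed semigroup action and to extract the required least upper bound from the weak quasi-lattice order hypothesis. Recall that $\rpadom{m} = \setof{x \in \PS{\Lambda}}{x \cap \Lambda^m \neq \emptyset}$ for $m \in P$. So I would fix $m, n \in P$ with $\rpadom{m} \cap \rpadom{n} \neq \emptyset$, pick any $x$ in this intersection, and choose $\mu \in x \cap \Lambda^m$ and $\nu \in x \cap \Lambda^n$. Since $x$ is a filter it is directed, so there is $\kappa \in x$ with $\mu, \nu \preceq \kappa$; applying the functor $\d$ yields $m \leq \d(\kappa)$ and $n \leq \d(\kappa)$, so $m$ and $n$ admit a common upper bound in $P$. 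The weak quasi-lattice order property of $(Q,P)$ then provides a least common upper bound $l \coloneqq m \vee n \in P$, and $m, n \leq l$ holds by construction.

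The remaining step is to check $\rpadom{m} \cap \rpadom{n} \subseteq \rpadom{l}$. Given $y \in \rpadom{m} \cap \rpadom{n}$, I would again choose $\mu' \in y \cap \Lambda^m$ and $\nu' \in y \cap \Lambda^n$ and, using that $y$ is a directed filter, some $\kappa' \in y$ with $\mu', \nu' \preceq \kappa'$. Then $\d(\kappa')$ is a common upper bound for $m$ and $n$, so minimality of $l$ gives $l \leq \d(\kappa')$; writing $\d(\kappa') = lp$ and invoking the unique factorisation property produces $\kappa' = \sigma\tau$ with $\d(\sigma) = l$. As $y$ is hereditary and $\sigma \preceq \kappa' \in y$, we get $\sigma \in y \cap \Lambda^l$, so $y \in \rpadom{l}$. (The reverse inclusion $\rpadom{l} \subseteq \rpadom{m} \cap \rpadom{n}$, which upgrades this to the equality mentioned after \cite[Definition 5.2]{RW17}, follows by the same hereditary-plus-factorisation argument applied to $m \leq l$ and $n \leq l$, but is not needed for the definition.)

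I do not anticipate a real obstacle. The only point needing care is to invoke the weak quasi-lattice order hypothesis at the right moment: it supplies a least upper bound only once a common upper bound is known to exist, and it is exactly the non-emptiness of $\rpadom{m} \cap \rpadom{n}$, together with the directedness of a witnessing filter, that guarantees this.
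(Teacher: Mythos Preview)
Your proof is correct and follows essentially the same approach as the paper: both use directedness of a filter to exhibit a common upper bound for $m$ and $n$, invoke the weak quasi-lattice order to obtain $l = m \vee n$, and then conclude $\rpadom{m} \cap \rpadom{n} \subseteq \rpadom{l}$. In fact you spell out the last inclusion (via unique factorisation and the hereditary property) more carefully than the paper, which simply asserts it.
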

\begin{proof}
    Fix $(m,n) \in P \times P$ such that there is some $x \in \rpadom{m} \cap \rpadom{n}$.
    Remember this means $(x,m),(x,n) \in \PS{\Lambda} * P$.
    Say $\mu \in x \cap \Lambda^m$ and $\nu \in x \cap \Lambda^n$.
    Because $x$ is directed, there are $\mu',\nu' \in \Lambda$ such that $\mu\mu' = \nu\nu' \in x$.
    Observe $m, n \leq \d(\mu\mu') = \d(\nu\nu')$.
    Since $(Q,P)$ is a weakly quasi-lattice ordered group, there is a least upper bound $l$ of $m$ and $n$, from which it follows that $\rpadom{m} \cap \rpadom{n} \subseteq \rpadom{l}$.
\end{proof}

We embark on a series of lemmas proving $(\PS{\Lambda},P,T)$ is locally compact (\cref{thm:PS_semigroup_action_locally_compact}). 
This is one of the main technical difficulties of the work that led to the definition of our path space $\PS{\Lambda}$.
Recall from the proof of \cref{thm:path_spaces} that the collection of $\Fell{\PS{\Lambda}}{\mu}{K}$, where $\mu \in \FA{\Lambda}$ and $K \subseteq \Lambda$ is finite, is a basis of compact sets for the topology on $\PS{\Lambda}$.

\begin{lemma}
    \label{lem:PS_semigroup_action_locally_compact_rpadom_open}
    \hypotheses
    For any $m \in P$, $\rpadom{m}$ is open in $\PS{\Lambda}$.
\end{lemma}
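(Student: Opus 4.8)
The plan is to show that every point of $\rpadom{m}$ has a basic open neighbourhood (of the form $\Fellin{\PS{\Lambda}}{\mu}$ with $\mu \in \FA{\Lambda}$) that is contained in $\rpadom{m}$. By \cref{thm:path_spaces}, such sets form a basis for $\PS{\Lambda}$, so this suffices to conclude $\rpadom{m}$ is open. Recall that, by \cref{def:semigroup_action_on_path_space}, $\rpadom{m} = \setof{x \in \PS{\Lambda}}{x \cap \Lambda^m \neq \emptyset}$.

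First I would fix $x \in \rpadom{m}$ and pick some $\eta \in x \cap \Lambda^m$. By \cref{lem:PS_characterisation}, since $\eta \in x \in \PS{\Lambda}$, there is $\mu \in x \cap \FA{\Lambda} \cap \eta\Lambda$; in particular $\mu \in \FA{\Lambda}$ and $\eta \preceq \mu$, so $x \in \Fellin{\PS{\Lambda}}{\mu}$. Next I would check that $\Fellin{\PS{\Lambda}}{\mu} \subseteq \rpadom{m}$: if $y \in \Fellin{\PS{\Lambda}}{\mu}$, then $\mu \in y$, and since $\eta \preceq \mu$ and filters are hereditary, $\eta \in y$, so $\eta \in y \cap \Lambda^m$, whence $y \in \rpadom{m}$. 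This shows $\Fellin{\PS{\Lambda}}{\mu}$ is a neighbourhood of $x$ contained in $\rpadom{m}$.

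Since $\mu \in \FA{\Lambda}$, the set $\Fellin{\PS{\Lambda}}{\mu}$ is one of the basic (compact, hence in particular open) sets from the basis described in \cref{thm:path_spaces}, so $\rpadom{m}$ is a union of basic open sets and therefore open in $\PS{\Lambda}$. There is no real obstacle here: the only subtlety is that the naive neighbourhood $\Fellin{\PS{\Lambda}}{\eta}$ need not consist of elements of $\PS{\Lambda}$ that interact well with $\FA{\Lambda}$ (indeed $\eta$ itself may fail to lie in $\FA{\Lambda}$), which is precisely why we invoke \cref{lem:PS_characterisation} to replace $\eta$ by an extension $\mu \in \FA{\Lambda}$ inside $x$; heredity of filters then does the rest.
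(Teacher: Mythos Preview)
Your argument is correct, but it is more elaborate than needed because of a small misconception. You worry that $\Fellin{\PS{\Lambda}}{\eta}$ might not be open in $\PS{\Lambda}$ when $\eta \notin \FA{\Lambda}$, and so you invoke \cref{lem:PS_characterisation} to extend $\eta$ to some $\mu \in \FA{\Lambda}$. But openness of $\Fellin{\PS{\Lambda}}{\eta}$ does not require $\eta \in \FA{\Lambda}$: by \cref{lem:basis_for_topology_on_filters} the sets $\Fellin{\filters{\Lambda}}{\eta}$ are open in $\filters{\Lambda}$ for \emph{any} $\eta \in \Lambda$, and hence $\Fellin{\PS{\Lambda}}{\eta} = \Fellin{\filters{\Lambda}}{\eta} \cap \PS{\Lambda}$ is open in the subspace $\PS{\Lambda}$. (The role of $\FA{\Lambda}$ in \cref{thm:path_spaces} is only to produce a basis of \emph{compact} open sets, not to characterise all open cylinders.) Accordingly, the paper's proof is a one-liner: it simply observes that
\[
\rpadom{m} = \bigcup_{\mu \in \Lambda^m} \Fellin{\PS{\Lambda}}{\mu},
\]
a union of open sets. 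Your detour through \cref{lem:PS_characterisation} is harmless---indeed it yields the slightly stronger fact that $\rpadom{m}$ is a union of compact basic open sets---but it is not required for the lemma as stated.
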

\begin{proof}
    We have $\rpadom{m} = \bigcup_{\mu \in \Lambda^m} \Fellin{\PS{\Lambda}}{\mu}$, which is a union of basic open sets.
\end{proof}

It would now be natural to try to prove $\rpacod{m} = \bigcup_{\mu \in \Lambda^m} \Fellin{\PS{\Lambda}}{\s(\mu)}$ so that $\rpacod{m}$ is open too.
We can prove $\rpacod{m} \subseteq \bigcup_{\mu \in \Lambda^m} \Fellin{\PS{\Lambda}}{\s(\mu)}$, but the reverse containment can fail because $\PS{\Lambda}$ is not closed under right-shift maps (\cref{ex:PS_right_shift_not_well_defined}).

\begin{lemma}
    \label{lem:PS_semigroup_action_locally_compact_rpacod_open}
    \hypotheses
    For any $m \in P$, $\rpacod{m}$ is open in $\PS{\Lambda}$.
\end{lemma}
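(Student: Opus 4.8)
The plan is to cover $\rpacod{m}$ by basic open cylinders of the form $\Fellin{\PS{\Lambda}}{\mu'}$ with $\mu' \in \FA{\Lambda}$. As observed just above, the naive candidate $\rpacod{m} = \bigcup_{\mu \in \Lambda^m}\Fellin{\PS{\Lambda}}{\s(\mu)}$ is too large because $\PS{\Lambda}$ is not closed under right-shift maps; the remedy is to shrink the cylinder, using the \emph{local} finite-alignment property, so that the right-shift used to produce a preimage lands back inside $\PS{\Lambda}$.

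In detail, I would fix $m \in P$ and $y \in \rpacod{m}$, say $y = \rpa{x}{m} = \shiftoff{\mu}{x}$ with $x \in \rpadom{m}$ and $\mu \coloneqq \pathfromto{x}{e}{m} \in x \cap \Lambda^m$. Since $\mu \in x \in \PS{\Lambda}$, \cref{lem:PS_characterisation} supplies $\mu\mu' \in x \cap \FA{\Lambda}$ for some $\mu' \in \Lambda$; by \cref{lem:finite_alignment}\ref{it:finite_alignment_closed_under_final_segments}, $\mu' \in \FA{\Lambda}$, and since $\mu\mu' \in x$ we get $\mu' \in \shiftoff{\mu}{x} = y$, so $y \in \Fellin{\PS{\Lambda}}{\mu'}$. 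It then remains to prove $\Fellin{\PS{\Lambda}}{\mu'} \subseteq \rpacod{m}$. Given $z \in \Fellin{\PS{\Lambda}}{\mu'}$, we have $\r(z) = \r(\mu') = \s(\mu)$, so the right-shift $\shifton{\mu}{z}$ is a well-defined filter lying in $\Fellin{\filters{\Lambda}}{\mu}$ and containing both $\mu$ and $\mu\mu'$; as $\mu\mu' \in \FA{\Lambda}$, this gives $\shifton{\mu}{z} \in \PS{\Lambda}$, hence $\shifton{\mu}{z} \in \rpadom{m}$ with $\pathfromto{(\shifton{\mu}{z})}{e}{m} = \mu$, and \cref{lem:filters_under_shifts}\ref{it:inversion} yields $\rpa{(\shifton{\mu}{z})}{m} = \shiftoff{\mu}{(\shifton{\mu}{z})} = z$. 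Thus $z \in \rpacod{m}$, and we conclude that $\rpacod{m}$ equals a union of basic open sets $\Fellin{\PS{\Lambda}}{\mu'}$, hence is open.

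The only step that is not a routine verification is the claim that $\shifton{\mu}{z} \in \PS{\Lambda}$ for every $z$ in the cylinder $\Fellin{\PS{\Lambda}}{\mu'}$, and this is exactly where the local treatment of finite alignment does the work: the witness $\mu\mu' \in \FA{\Lambda}$ extracted from $x$ is a fixed path depending only on $x$ and $\mu$, and it belongs to $\shifton{\mu}{z}$ for every $z$ containing $\mu'$, so a single element of $\FA{\Lambda}$ certifies membership in $\PS{\Lambda}$ uniformly across the whole cylinder.
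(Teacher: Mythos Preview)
Your proof is correct and follows essentially the same approach as the paper: both pick a point $\rpa{x}{m}\in\rpacod{m}$, use \cref{lem:PS_characterisation} to find $\mu\mu'\in x\cap\FA{\Lambda}$, deduce $\mu'\in\FA{\Lambda}$ via \cref{lem:finite_alignment}\ref{it:finite_alignment_closed_under_final_segments}, and then show $\Fellin{\PS{\Lambda}}{\mu'}\subseteq\rpacod{m}$ by applying the right-shift $\shifton{\mu}{-}$ and observing that $\mu\mu'\in\FA{\Lambda}$ forces the resulting filter into $\PS{\Lambda}$. The only cosmetic difference is that the paper notes $\Fellin{\filters{\Lambda}}{\mu'}=\Fellin{\PS{\Lambda}}{\mu'}$ explicitly, whereas you work directly with $\Fellin{\PS{\Lambda}}{\mu'}$.
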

\begin{proof}
    Fix $\rpa{x}{m} \in \rpacod{m}$, where $x \in \rpadom{m}$.
    Write $\mu \coloneqq \pathfromto{x}{e}{m} \in x \cap \Lambda^m$ so that $\rpa{x}{m} = \shiftoff{\mu}{x}$.
    By \cref{lem:PS_characterisation}, there is some $\mu' \in \Lambda$ with $\mu\mu' \in x \cap \FA{\Lambda}$.
    We claim $\rpa{x}{m} \in \Fellin{\PS{\Lambda}}{\mu'} \subseteq \rpacod{m}$.
    Notice $\rpa{x}{m} = \shiftoff{\mu}{x} \in \Fellin{\filters{\Lambda}}{\mu'}$.
    Also, $\mu\mu' \in \FA{\Lambda}$ implies $\mu' \in \FA{\Lambda}$ by \cref{lem:finite_alignment}\ref{it:finite_alignment_closed_under_final_segments}, so $\Fellin{\filters{\Lambda}}{\mu'} = \Fellin{\PS{\Lambda}}{\mu'}$, which is open in $\PS{\Lambda}$.
    Now fix $y \in \Fellin{\PS{\Lambda}}{\mu'}$.
    Then, $\shifton{\mu}{y} \in \Fellin{\filters{\Lambda}}{\mu\mu'} = \Fellin{\PS{\Lambda}}{\mu\mu'} \subseteq \rpadom{m}$, and so $y = \shiftoff{\mu}{(\shifton{\mu}{y})} = \rpa{(\shifton{\mu}{y})}{m} \in \rpacod{m}$.
\end{proof}

Recall from \cref{ex:left_shift_not_open} that right-shift maps are not always continuous. 
Thus, in order to prove $T$ acts by local homeomorphisms for \cref{thm:PS_semigroup_action_locally_compact} we need the following lemma, which says that right-shift maps are \textit{continuous enough}.

\begin{lemma}
    \label{lem:right_shift_continuous_almost}
    \hypotheses
    If $(\mu, \mu') \in \composablepairs{\Lambda}$,  $\mu\mu' \in \FA{\Lambda}$ and $w_n \to w$ in $\Fellin{\filters{\Lambda}}{\mu'}$, then there is a subsequence $(w_{n_k})$ such that $\shifton{\mu}{w_{n_k}} \to \shifton{\mu}{w}$ in $\Fellin{\filters{\Lambda}}{\mu\mu'}$.
\end{lemma}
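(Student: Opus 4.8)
The plan is to use compactness of the target cylinder $\Fellin{\filters{\Lambda}}{\mu\mu'}$ to extract a convergent subsequence of $(\shifton{\mu}{w_n})$, and then to identify its limit by applying the \emph{left}-shift map, which (unlike the right-shift) is continuous. The hypothesis $\mu\mu' \in \FA{\Lambda}$ enters exactly once, to supply that compactness via \cref{lem:cpt_char}.

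First I would check that the sequence $(\shifton{\mu}{w_n})$ lives in $\Fellin{\filters{\Lambda}}{\mu\mu'}$. Since $(\mu,\mu') \in \composablepairs{\Lambda}$ we have $\r(\mu') = \s(\mu)$, and as filters are hereditary, $w_n \in \Fellin{\filters{\Lambda}}{\mu'} \subseteq \Fellin{\filters{\Lambda}}{\s(\mu)}$, so the right-shift map associated to $\mu$ is defined on each $w_n$ and on $w$. Applying the inverse of the identity $\shiftoff{\mu}{(\Fellin{\filters{\Lambda}}{\mu\mu'})} = \Fellin{\filters{\Lambda}}{\mu'}$ from \cref{lem:left_shift_cts_bijection} (recall the right-shift is the inverse of the left-shift), we get $\shifton{\mu}{(\Fellin{\filters{\Lambda}}{\mu'})} = \Fellin{\filters{\Lambda}}{\mu\mu'}$; hence each $\shifton{\mu}{w_n}$, and also $\shifton{\mu}{w}$, lies in $\Fellin{\filters{\Lambda}}{\mu\mu'}$.

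Next, because $\mu\mu' \in \FA{\Lambda}$, \cref{lem:cpt_char} gives that $\Fellin{\filters{\Lambda}}{\mu\mu'}$ is compact, hence sequentially compact by \cref{lem:sequential_compactness}. So $(\shifton{\mu}{w_n})$ has a subsequence $(\shifton{\mu}{w_{n_k}})$ converging to some $y \in \Fellin{\filters{\Lambda}}{\mu\mu'}$. It remains to show $y = \shifton{\mu}{w}$. Since $\mu \preceq \mu\mu' \in y$, the left-shift map associated to $\mu$ is defined on $y$ and on each $\shifton{\mu}{w_{n_k}}$, and it is continuous by \cref{lem:left_shift_cts_bijection}; thus $\shiftoff{\mu}{(\shifton{\mu}{w_{n_k}})} \to \shiftoff{\mu}{y}$. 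By the inversion identity \cref{lem:filters_under_shifts}\ref{it:inversion} the left-hand terms equal $w_{n_k}$, which converges to $w$ as a subsequence of $(w_n)$; since $\filters{\Lambda}$ is a subspace of the Hausdorff space $\powerset{\Lambda}$, limits are unique, so $\shiftoff{\mu}{y} = w$. Applying the right-shift map and using \cref{lem:filters_under_shifts}\ref{it:inversion} once more (legitimate since $\mu \in y$) yields $\shifton{\mu}{w} = \shifton{\mu}{(\shiftoff{\mu}{y})} = y$, so $\shifton{\mu}{w_{n_k}} \to \shifton{\mu}{w}$ in $\Fellin{\filters{\Lambda}}{\mu\mu'}$, as required.

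There is no genuine obstacle here; the one point worth flagging is that one cannot simply invoke continuity of the right-shift map, since \cref{ex:left_shift_not_open} shows it can fail to be continuous. Compactness of $\Fellin{\filters{\Lambda}}{\mu\mu'}$ is precisely what compensates for this, which is why the assumption $\mu\mu' \in \FA{\Lambda}$ is essential and cannot be weakened to $\mu' \in \FA{\Lambda}$.
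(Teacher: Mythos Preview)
Your proof is correct and follows essentially the same approach as the paper: use \cref{lem:cpt_char} to get compactness of $\Fellin{\filters{\Lambda}}{\mu\mu'}$, extract a convergent subsequence, then push the limit through the continuous left-shift map and use the inversion identities from \cref{lem:filters_under_shifts} to identify it as $\shifton{\mu}{w}$. The only cosmetic difference is that you justify $\shifton{\mu}{w_n} \in \Fellin{\filters{\Lambda}}{\mu\mu'}$ via the bijection in \cref{lem:left_shift_cts_bijection} rather than directly citing \cref{lem:filters_under_shifts}, and you make explicit the appeal to sequential compactness and Hausdorff uniqueness of limits; none of this changes the argument.
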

\begin{proof}
    Since $(w_n) \subseteq \Fellin{\filters{\Lambda}}{\mu'}$, we have $(\shifton{\mu}{w_n}) \subseteq \Fellin{\filters{\Lambda}}{\mu\mu'}$ by definition of the right-shift map associated to $\mu$.
    Because $\mu\mu' \in \FA{\Lambda}$, $\Fellin{\filters{\Lambda}}{\mu\mu'}$ is compact by \cref{lem:cpt_char}.
    Thus, there is a subsequence $(\shifton{\mu}{w_{n_k}})$ that converges to some $z \in \Fellin{\filters{\Lambda}}{\mu\mu'}$.
    The left-shift map associated to $\mu$ is continuous (\cref{lem:left_shift_cts_bijection}), so
    \(
        w_{n_k} 
        = \shiftoff{\mu}{(\shifton{\mu}{w_{n_k}})} 
        \to \shiftoff{\mu}{z}.
    \)
    By assumption, $w_{n_k} \to w$, so $\shiftoff{\mu}{z} = w$.
    Therefore, $z = \shifton{\mu}{w}$.
\end{proof}

\begin{theorem}
    \label{thm:PS_semigroup_action_locally_compact}
    \hypotheses 
    The semigroup action $(\PS{\Lambda},P,T)$ is locally compact and directed.
\end{theorem}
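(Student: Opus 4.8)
The plan is that almost everything needed is already in place, and only one new point requires verification. By \cref{prop:PS_semigroup_action}, the triple $(\PS{\Lambda},P,T)$ is a semigroup action, and it is directed by \cref{prop:PS_semigroup_action_directed}. The space $\PS{\Lambda}$ is locally compact Hausdorff by \cref{thm:path_spaces}, and for each $m \in P$ the sets $\rpadom{m}$ and $\rpacod{m}$ are open in $\PS{\Lambda}$ by \cref{lem:PS_semigroup_action_locally_compact_rpadom_open} and \cref{lem:PS_semigroup_action_locally_compact_rpacod_open}. So all that remains is to show that, for each $m \in P$, the map $\rpamap{m} \colon \rpadom{m} \to \rpacod{m}$ is a local homeomorphism.

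First I would fix $m \in P$ and $x \in \rpadom{m}$, put $\mu \coloneqq \pathfromto{x}{e}{m} \in x \cap \Lambda^m$, and use \cref{lem:PS_characterisation} to choose $\mu'$ with $\mu\mu' \in x \cap \FA{\Lambda}$; then $\mu' \in \FA{\Lambda}$ by \cref{lem:finite_alignment}\ref{it:finite_alignment_closed_under_final_segments}. Take $U \coloneqq \Fellin{\filters{\Lambda}}{\mu\mu'}$. Since $\mu\mu' \in \FA{\Lambda}$, every filter containing $\mu\mu'$ meets $\FA{\Lambda}$, so $U \subseteq \PS{\Lambda}$ and $U = \Fellin{\PS{\Lambda}}{\mu\mu'}$, which is a compact open neighbourhood of $x$ by \cref{thm:path_spaces} (via \cref{lem:cpt_char}). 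The crucial observation is that $\rpamap{m}$ restricted to $U$ is just the left-shift map associated to $\mu$: if $y \in U$ then $\mu\mu' \in y$, hence $\mu \in y$ because filters are hereditary, hence $\pathfromto{y}{e}{m} = \mu$ and $\rpamap{m}(y) = \shiftoff{\mu}{y}$. By \cref{lem:left_shift_cts_bijection}, this left-shift map is a continuous injection and carries $U = \Fellin{\filters{\Lambda}}{\mu\mu'}$ onto $\Fellin{\filters{\Lambda}}{\mu'} = \Fellin{\PS{\Lambda}}{\mu'}$ (the equality because $\mu' \in \FA{\Lambda}$), which is open in $\PS{\Lambda}$; and it lies inside $\rpacod{m}$ by the argument in the proof of \cref{lem:PS_semigroup_action_locally_compact_rpacod_open}, hence is open in $\rpacod{m}$. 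So $\rpamap{m}|_{U} \colon U \to \Fellin{\PS{\Lambda}}{\mu'}$ is a continuous bijection onto an open subset of $\rpacod{m}$.

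The one genuinely delicate step is to upgrade this continuous bijection to a homeomorphism, which is exactly what makes $\rpamap{m}$ a local homeomorphism; the subtlety is that the inverse of $\rpamap{m}|_{U}$ is a restriction of a right-shift map, and right-shift maps need not be continuous (\cref{ex:left_shift_not_open}). There is a clean way to sidestep this: $U$ is compact and $\Fellin{\PS{\Lambda}}{\mu'}$ is Hausdorff, so any continuous bijection $U \to \Fellin{\PS{\Lambda}}{\mu'}$ is automatically a homeomorphism (closed subsets of $U$ are compact, continuous images of compact sets are compact, and compact subsets of a Hausdorff space are closed, so $\rpamap{m}|_{U}$ is a closed map). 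Alternatively — and this is presumably why \cref{lem:right_shift_continuous_almost} was established — one can check the inverse right-shift map is sequentially continuous: given $w_n \to w$ in $\Fellin{\PS{\Lambda}}{\mu'}$, \cref{lem:right_shift_continuous_almost} gives a subsequence with $\shifton{\mu}{w_{n_k}} \to \shifton{\mu}{w}$, and since every subsequence of $(\shifton{\mu}{w_n})$ then admits a further subsequence converging to $\shifton{\mu}{w}$, the whole sequence converges to $\shifton{\mu}{w}$; metrizability of $\PS{\Lambda}$ (the discussion around \cref{lem:sequential_compactness}) promotes sequential continuity to continuity. Either way, $\rpamap{m}|_{U}$ is a homeomorphism onto the open set $\rpamap{m}(U)$, so $\rpamap{m}$ is a local homeomorphism; combined with \cref{prop:PS_semigroup_action_directed}, this shows $(\PS{\Lambda},P,T)$ is locally compact and directed. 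The main obstacle is precisely this homeomorphism check; everything else is assembling the previously proved lemmas.
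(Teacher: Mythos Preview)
Your proof is correct and follows the paper's setup almost verbatim: same reductions to the earlier lemmas, same neighbourhood $U=\Fellin{\PS{\Lambda}}{\mu\mu'}$, same identification of $\rpamap{m}|_U$ with the left-shift by $\mu$ onto $\Fellin{\PS{\Lambda}}{\mu'}$. The only divergence is in the final ``upgrade to homeomorphism'' step.

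The paper proves openness of $\rpamap{m}|_U$ directly: for open $O\subseteq U$ it shows $\Fellin{\PS{\Lambda}}{\mu'}\setminus\rpamap{m}(O)$ is sequentially closed, using \cref{lem:right_shift_continuous_almost} to pass to a subsequence with $\shifton{\mu}{w_{n_k}}\to\shifton{\mu}{w}$ and then deriving a contradiction. Your alternative (b) is a repackaging of the same idea (every subsequence has a further subsequence converging to the right limit, hence the whole sequence converges), just phrased as continuity of the inverse rather than openness of the forward map. Your alternative (a), however, is genuinely cleaner: since $U$ is compact (\cref{lem:cpt_char}) and $\Fellin{\PS{\Lambda}}{\mu'}$ is Hausdorff, the continuous bijection $\rpamap{m}|_U$ is automatically a homeomorphism, and \cref{lem:right_shift_continuous_almost} is not needed at all for this theorem. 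The paper's route has the minor advantage of explaining why \cref{lem:right_shift_continuous_almost} was isolated, but your compactness argument is the more economical one.
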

\begin{proof}
    Recall from \cref{prop:PS_semigroup_action_directed} that $(\PS{\Lambda},P,T)$ is directed.
    By \cref{thm:path_spaces}, $\PS{\Lambda}$ is a locally compact Hausdorff space.
    Now fix $m \in P$, and consider the map $\rpamap{m}$ from $\rpadom{m}$ to $\rpacod{m}$.
    We showed $\rpadom{m}$ and $\rpacod{m}$ are open in \cref{lem:PS_semigroup_action_locally_compact_rpadom_open,lem:PS_semigroup_action_locally_compact_rpacod_open}, respectively.    
    It remains to show $\rpamap{m}$ is a local homeomorphism. 
    Fix $x \in \rpadom{m}$, and say $\mu \in x \cap \Lambda^m$.
    By \cref{lem:PS_characterisation}, there is some $\mu' \in \Lambda$ such that $\mu\mu' \in x \cap \FA{\Lambda}$, which means $\mu' \in \FA{\Lambda}$ by \cref{lem:finite_alignment}\ref{it:finite_alignment_closed_under_final_segments}.
    Since $\mu\mu', \mu' \in \FA{\Lambda}$, we have $\Fellin{\filters{\Lambda}}{\mu\mu'} = \Fellin{\PS{\Lambda}}{\mu\mu'}$ and $\Fellin{\filters{\Lambda}}{\mu'} = \Fellin{\PS{\Lambda}}{\mu'}$, which are both compact by \cref{lem:cpt_char}.
    Because $\mu \in y$ for all $y \in \Fellin{\PS{\Lambda}}{\mu\mu'}$, the restriction of $\rpamap{m}$ to $\Fellin{\PS{\Lambda}}{\mu\mu'}$ coincides with the left-shift map associated to $\mu$.
    Then, \cref{lem:left_shift_cts_bijection} implies the image of $\Fellin{\PS{\Lambda}}{\mu\mu'}$ under $\rpamap{m}$ is $\Fellin{\PS{\Lambda}}{\mu'}$.
    Since the left-shift map associated to $\mu$ is a continuous bijection (\cref{lem:left_shift_cts_bijection}), it remains to show the map is open.
    Let $O$ be an open set in $\Fellin{\PS{\Lambda}}{\mu\mu'}$.
    We show $\Fellin{\PS{\Lambda}}{\mu'} \setminus \rpamap{m}(O)$ is closed.
    Suppose $(w_n) \subseteq \Fellin{\PS{\Lambda}}{\mu'} \setminus \rpamap{m}(O)$ converges to some $w \in \Fellin{\PS{\Lambda}}{\mu'}$.
    By \cref{lem:right_shift_continuous_almost}, there is a subsequence $(w_{n_k})$ such that $\shifton{\mu}{w_{n_k}} \to \shifton{\mu}{w}$ in $\Fellin{\PS{\Lambda}}{\mu\mu'}$.
    Then, $\rpamap{m}(\shifton{\mu}{w}) = \rpa{(\shifton{\mu}{w})}{m} = \shiftoff{\mu}{(\shifton{\mu}{w})} = w$. 
    Suppose for a contradiction that $w \in \rpamap{m}(O)$. 
    Then, the previous calculation implies $\shifton{\mu}{w} \in O$ because $\rpamap{m}$ is a bijection.
    Thus, $\shifton{\mu}{w_{n_k}}$ is eventually in $O$, so $w_{n_k} = \rpamap{m}(\shifton{\mu}{w_{n_k}})$ is eventually in $\rpamap{m}(O)$, which is a contradiction.
\end{proof}

\subsection{Path and boundary-path groupoids}

Recall from \cref{sec:semidirect_product_groupoid_prelims} the semidirect product groupoid associated to a semigroup action as in \cite{RW17}. 
Having constructed a locally compact and directed semigroup action $(\PS{\Lambda}, P, T)$ in \cref{thm:PS_semigroup_action_locally_compact}, we can define an \'etale and Hausdorff groupoid as follows.

\begin{definition}[Path groupoid]
    \hypotheses
    The \define{path groupoid} $\PG{\Lambda}$ of $\Lambda$ is the semidirect product groupoid $\sdpg{\PS{\Lambda}}{P}{T}$ of the semigroup action $(\PS{\Lambda},P,T)$ from \cref{thm:PS_semigroup_action_locally_compact}.
\end{definition}

\begin{lemma}
    \label{lem:BPS_closed_invariant}
    \hypotheses
    The boundary-path space $\BPS{\Lambda}$ identifies with a closed invariant subset of $\unitspace{\PG{\Lambda}}$.
\end{lemma}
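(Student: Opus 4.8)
The plan is to transport everything to $\unitspace{\PG{\Lambda}}$ along the homeomorphism $\iota \colon \PS{\Lambda} \to \unitspace{\PG{\Lambda}}$, $x \mapsto (x,e,x)$, from \cref{sec:semidirect_product_groupoid_prelims}. Since $\BPS{\Lambda}$ is by definition the closure of $\ultrafilters{\Lambda} \cap \PS{\Lambda}$ in $\PS{\Lambda}$, it is closed in $\PS{\Lambda}$, so $\iota(\BPS{\Lambda})$ is closed in $\unitspace{\PG{\Lambda}}$. It thus remains to check that $\iota(\BPS{\Lambda})$ is invariant, i.e. $\r(\s^{-1}(\iota(\BPS{\Lambda}))) \subseteq \iota(\BPS{\Lambda})$.

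First I would show that $\iota(\ultrafilters{\Lambda} \cap \PS{\Lambda})$ is invariant. Unwinding the definition of $\PG{\Lambda} = \sdpg{\PS{\Lambda}}{P}{T}$, this reduces to the following: if $y \in \ultrafilters{\Lambda} \cap \PS{\Lambda}$ and $m, n \in P$ satisfy $x \in \rpadom{m}$, $y \in \rpadom{n}$ and $\rpa{x}{m} = \rpa{y}{n}$, then $x \in \ultrafilters{\Lambda}$ (we already have $x \in \PS{\Lambda}$ since $x \in \unitspace{\PG{\Lambda}}$). Put $\mu \coloneqq \pathfromto{x}{e}{m} \in x$ and $\nu \coloneqq \pathfromto{y}{e}{n} \in y$, so that $\rpa{x}{m} = \shiftoff{\mu}{x}$ and $\rpa{y}{n} = \shiftoff{\nu}{y}$. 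Then $\rpa{y}{n} \in \ultrafilters{\Lambda}$ by \cref{lem:filters_under_shifts}\ref{it:ultrafilters_closed_under_shifts}, hence $\rpa{x}{m} \in \ultrafilters{\Lambda}$. Now \cref{lem:filters_under_shifts}\ref{it:inversion} gives $x = \shifton{\mu}{(\shiftoff{\mu}{x})} = \shifton{\mu}{(\rpa{x}{m})}$; since $\s(\mu) \in \rpa{x}{m}$ and $(\mu, \s(\mu)) \in \composablepairs{\Lambda}$, a second application of \cref{lem:filters_under_shifts}\ref{it:ultrafilters_closed_under_shifts} (this time to the right-shift) yields $x = \shifton{\mu}{(\rpa{x}{m})} \in \ultrafilters{\Lambda}$, as desired.

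Second I would invoke the standard fact that in an \'etale groupoid the closure of an invariant subset of the unit space is again invariant. Concretely: $\PG{\Lambda}$ has a basis of open bisections $V$, and for each such $V$ the map $\tau_V \coloneqq \r|_V \circ (\s|_V)^{-1} \colon \s(V) \to \r(V)$ is a homeomorphism between open subsets of $\unitspace{\PG{\Lambda}}$, and invariance of a set $U \subseteq \unitspace{\PG{\Lambda}}$ is equivalent to $\tau_V(U \cap \s(V)) \subseteq U$ for every open bisection $V$. As $\s(V)$ is open, the closure of $U \cap \s(V)$ in $\s(V)$ is $\overline{U} \cap \s(V)$, so continuity of $\tau_V$ gives $\tau_V(\overline{U} \cap \s(V)) \subseteq \overline{\tau_V(U \cap \s(V))} \subseteq \overline{U}$; taking the union over all open bisections $V$ gives $\r(\s^{-1}(\overline{U})) \subseteq \overline{U}$. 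Applying this to $U = \iota(\ultrafilters{\Lambda} \cap \PS{\Lambda})$ and noting $\overline{U} = \iota(\BPS{\Lambda})$ (as $\iota$ is a homeomorphism) finishes the argument.

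I expect the first step to be the main obstacle: keeping straight which of the left- and right-shift maps preserve ultrafilters, verifying the composability hypotheses required to invoke \cref{lem:filters_under_shifts}, and in particular using the inversion identity to pull the ultrafilter property of $\rpa{x}{m}$ back to $x$ itself. The closure-invariance fact used in the second step is routine point-set topology and could alternatively be cited.
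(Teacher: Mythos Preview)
Your argument is correct, and it takes a genuinely different route from the paper's proof. The paper proves invariance of $\BPS{\Lambda}$ directly: given $y \in \BPS{\Lambda}$ and $x \in \PS{\Lambda}$ with $\rpa{x}{m} = \rpa{y}{n}$, it chooses a sequence $(y_n) \subseteq \ultrafilters{\Lambda} \cap \PS{\Lambda}$ converging to $y$, applies the continuous left-shift by $\nu$, then the right-shift by $\mu$, and invokes \cref{lem:right_shift_continuous_almost} to extract a subsequence of $(\shifton{\mu}{(\shiftoff{\nu}{y_n})})$ converging to $x$; along the way it must choose $\mu' \in \Lambda$ with $\mu\mu' \in \FA{\Lambda}$ and pass to tails of sequences to ensure the intermediate filters land in $\PS{\Lambda}$. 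Your approach instead first shows the smaller set $\ultrafilters{\Lambda} \cap \PS{\Lambda}$ is invariant---a purely algebraic consequence of \cref{lem:filters_under_shifts}---and then appeals to the general fact that in an \'etale groupoid the closure of an invariant subset of the unit space is again invariant. This decomposition is cleaner: it separates the combinatorial content (shifts preserve ultrafilters) from the topological content (closure preserves invariance), and it bypasses both \cref{lem:right_shift_continuous_almost} and the bookkeeping needed to keep everything inside $\PS{\Lambda}$. The paper's argument, on the other hand, is more self-contained within the framework already developed and does not require the reader to know or verify the bisection argument.
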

\begin{proof}
    We have that $\BPS{\Lambda}$ is closed in $\PS{\Lambda}$ by \cref{def:BPS}.
    The homeomorphism $z \mapsto (z,e,z)$ from $\PS{\Lambda}$ to $\unitspace{\PG{\Lambda}}$ identifies the subset $\BPS{\Lambda} \subseteq \PS{\Lambda}$ with a subset of $\unitspace{\PG{\Lambda}}$ that is invariant if, whenever $m, n \in P$, $x \in \rpadom{m}$, $y \in \rpadom{n} \cap \BPS{\Lambda}$ satisfy $\rpa{x}{m} = \rpa{y}{n}$, we have $x \in \BPS{\Lambda}$.
    Fix $m, n, x, y$ as above.
    Say $\mu \in x \cap \Lambda^m$ and $\nu \in y \cap \Lambda^n$.
    By \cref{lem:PS_characterisation}, there is some $\mu' \in \Lambda$ such that $\mu\mu' \in x \cap \FA{\Lambda}$. 
    Then, $\Fellin{\filters{\Lambda}}{\mu\mu'}$ is compact by \cref{lem:cpt_char}.
    By definition, $\BPS{\Lambda}$ is the closure of $\ultrafilters{\Lambda} \cap \PS{\Lambda}$ in $\PS{\Lambda}$, so there is a sequence $(y_n) \subseteq \ultrafilters{\Lambda} \cap \PS{\Lambda}$ such that $y_n \to y$ in $\PS{\Lambda}$.
    Because $\PS{\Lambda}$ is a subspace of $\filters{\Lambda}$, we also have that $y_n \to y$ in $\filters{\Lambda}$, so $(y_n)$ is eventually in $\Fellin{\filters{\Lambda}}{\nu}$, so we assume without loss of generality that $\nu \in y_n$ for all $n$.
    We have $(\shiftoff{\nu}{y_n}) \subseteq \ultrafilters{\Lambda} \cap \PS{\Lambda}$ because both $\ultrafilters{\Lambda}$ and $\PS{\Lambda}$ are closed under left-shift maps (\cref{lem:filters_under_shifts}\ref{it:ultrafilters_closed_under_shifts} and \cref{lem:PS_closed_under_left_shift}).
    We also know $(\shifton{\mu}{(\shiftoff{\nu}{y_n})}) \subseteq \ultrafilters{\Lambda}$ by \cref{lem:filters_under_shifts}\ref{it:ultrafilters_closed_under_shifts}.
    Note that $y_n \to y$ in $\Fellin{\filters{\Lambda}}{\nu}$.
    Also, the left-shift map associated to $\nu$ is continuous (\cref{lem:left_shift_cts_bijection}), so
    \(
        \shiftoff{\nu}{y_n} 
        \to \shiftoff{\nu}{y} 
        = \shiftoff{\mu}{x} 
        \in \Fellin{\filters{\Lambda}}{\mu'}.
    \)
    Thus, we assume without loss of generality that $\mu' \in \shiftoff{\nu}{y_n}$ for all $n$.
    Therefore, every $\shifton{\mu}{(\shiftoff{\nu}{y_n})}$ contains $\mu\mu' \in \FA{\Lambda}$, so $(\shifton{\mu}{(\shiftoff{\nu}{y_n})}) \subseteq \ultrafilters{\Lambda} \cap \PS{\Lambda}$.
    Now \cref{lem:right_shift_continuous_almost} implies there is a subsequence $(\shiftoff{\nu}{y_{n_k}})$ such that 
    \(
        \shifton{\mu}{(\shiftoff{\nu}{y_{n_k}})} 
        \to \shifton{\mu}{(\shiftoff{\nu}{y})} 
        = \shifton{\mu}{(\shiftoff{\mu}{x})}
        = x.
    \)
\end{proof}

\begin{definition}[Boundary-path groupoid]
    \label{def:boundary_path_groupoid}
    \hypotheses
    The \define{boundary-path groupoid} $\BPG{\Lambda}$ of $\Lambda$ is the reduction $\reduction{\PG{\Lambda}}{\BPS{\Lambda}}$ of $\PG{\Lambda}$ to $\BPS{\Lambda}$.
\end{definition}

\begin{theorem}
    \label{thm:path_groupoids}
    \hypotheses
    The path groupoid $\PG{\Lambda}$ and the boundary-path groupoid $\BPG{\Lambda}$ are ample, second-countable and Hausdorff.
\end{theorem}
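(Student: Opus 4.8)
The plan is to assemble the claim from structural facts already in hand, since by definition $\PG{\Lambda} = \sdpg{\PS{\Lambda}}{P}{T}$ is the semidirect product groupoid of the locally compact directed semigroup action produced in \cref{thm:PS_semigroup_action_locally_compact}. The preliminaries of \cref{sec:semidirect_product_groupoid_prelims} record that the semidirect product groupoid of any locally compact directed semigroup action is \'etale and Hausdorff and that $z \mapsto (z,e,z)$ is a homeomorphism from the acting space onto the unit space; hence $\PG{\Lambda}$ is \'etale and Hausdorff with $\unitspace{\PG{\Lambda}}$ homeomorphic to $\PS{\Lambda}$. For ampleness I would invoke the characterisation that an \'etale groupoid is ample precisely when its unit space has a basis of compact open sets: \cref{thm:path_spaces} endows $\PS{\Lambda}$ with a countable basis of compact sets, which, being a basis, consists of open sets, hence of compact open sets, and this transports along the homeomorphism $\PS{\Lambda} \cong \unitspace{\PG{\Lambda}}$.

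For second countability I would apply \cref{lem:sdpg_basic_basis}: taking $\mathcal{B}$ to be the basis $\{\Fell{\PS{\Lambda}}{\mu}{K} : \mu \in \FA{\Lambda},\ K \subseteq \Lambda \text{ finite}\}$ of \cref{thm:path_spaces}, the sets $\sdpgcylinder{B}{m}{n}{C}$ with $B,C \in \mathcal{B}$ and $m,n \in P$ form a basis for $\PG{\Lambda}$. A priori the indices $m,n$ range over all of $P$, which need not be countable, so the one point to check is that nothing is lost by restricting to the countable set $\d(\Lambda)$: if $\sdpgcylinder{B}{m}{n}{C} \neq \emptyset$, then some $x \in B$ lies in $\rpadom{m}$, forcing $\Lambda^m \neq \emptyset$, i.e.\ $m \in \d(\Lambda)$, and similarly $n \in \d(\Lambda)$. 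Discarding the (empty) basic sets with $m$ or $n$ outside $\d(\Lambda)$ leaves a basis, and since $\Lambda$, hence $\d(\Lambda)$ and $\mathcal{B}$, are countable, this basis is countable.

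It remains to treat $\BPG{\Lambda} = \reduction{\PG{\Lambda}}{\BPS{\Lambda}}$. By \cref{lem:BPS_closed_invariant}, $\BPS{\Lambda}$ identifies with a closed invariant subset of $\unitspace{\PG{\Lambda}}$, so by the fact recalled in \cref{sec:prelims} that the reduction of an \'etale groupoid to a closed invariant set is an \'etale closed subgroupoid, $\BPG{\Lambda}$ is \'etale with unit space $\BPS{\Lambda}$. As a subspace of $\PG{\Lambda}$ it is Hausdorff and second countable, and it is ample because \cref{thm:path_spaces} also gives $\BPS{\Lambda}$ a countable basis of compact (open) sets. I do not expect a genuine obstacle here: the theorem is a bookkeeping consequence of \cref{thm:PS_semigroup_action_locally_compact}, \cref{thm:path_spaces}, \cref{lem:sdpg_basic_basis} and \cref{lem:BPS_closed_invariant} together with the general theory of semidirect product groupoids, the only subtlety being the countability of the index set in \cref{lem:sdpg_basic_basis}, which is handled via $\d(\Lambda)$.
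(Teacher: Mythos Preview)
Your proposal is correct and follows essentially the same route as the paper: invoke the semidirect-product preliminaries for \'etale and Hausdorff, \cref{thm:path_spaces} for ampleness, \cref{lem:sdpg_basic_basis} for second countability, and \cref{lem:BPS_closed_invariant} plus the reduction fact for $\BPG{\Lambda}$. Your treatment of second countability is in fact more careful than the paper's: the paper simply asserts that \cref{lem:sdpg_basic_basis} yields a countable basis because $\mathcal{B}$ is countable, tacitly ignoring that the index set also involves $m,n\in P$, whereas you observe that nonempty basic sets force $m,n\in\d(\Lambda)$, which is countable since $\Lambda$ is.
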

\begin{proof}
    The path groupoid $\PG{\Lambda}$ is the semidirect product groupoid of the locally compact and directed semigroup action $(\PS{\Lambda},P,T)$ from \cref{thm:PS_semigroup_action_locally_compact}, so $\PG{\Lambda}$ is \'etale and Hausdorff. 
    Reductions of \'etale groupoids to closed invariant subsets of their unit space are closed \'etale Hausdorff subgroupoids, so $\BPG{\Lambda}$ is \'etale because of \cref{lem:BPS_closed_invariant}.
    The unit spaces $\unitspace{\PG{\Lambda}}$ and $\unitspace{\BPG{\Lambda}}$
    are homeomorphic to $\PS{\Lambda}$ and $\BPS{\Lambda}$,
    respectively, which have bases of compact sets by \cref{thm:path_spaces}.
    Hence, $\PG{\Lambda}$ and $\BPG{\Lambda}$ are ample.
    The bases from \cref{thm:path_spaces} are also countable, so 
    \cref{lem:sdpg_basic_basis} yields countable bases for
    $\PG{\Lambda}$ and the subspace $\BPG{\Lambda}$.
    Therefore, $\PG{\Lambda}$ and $\BPG{\Lambda}$ are second-countable.
\end{proof}

\subsection{Sufficient condition for amenability of the groupoids}

Our path groupoid $\PG{\Lambda}$ is second-countable, Hausdorff and \'etale, so the three predominant notions of amenability (measurewise, topological and Borel) are all equivalent (see \cite{AR00} and \cite[Corollary 2.15]{Ren15}).
Also, the boundary-path groupoid $\BPG{\Lambda}$ is a closed subgroupoid of $\PG{\Lambda}$, so $\BPG{\Lambda}$ is amenable whenever $\PG{\Lambda}$ is amenable \cite[Proposition 10.1.14]{SSW20}.
Renault and Williams supply a sufficient condition for the amenability of semidirect product groupoids in \cite[Theorem 5.13]{RW17}. 
This allows us to generalise various amenability results for path groupoids in the setting of discrete $P$-graphs, including \cite[Proposition 6.2]{Yee07}, \cite[Corollary 6.19]{RW17}, and \cite[Corollary 4.3]{RSWY18}.

\begin{proposition}[Theorem 5.13 of \cite{RW17}]
    \label{prop:amenable}
    \hypotheses
    If $Q$ is countable and amenable, then $\PG{\Lambda}$ and $\BPG{\Lambda}$ are amenable. 
\end{proposition}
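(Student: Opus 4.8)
The plan is to obtain amenability of $\PG{\Lambda}$ from \cite[Theorem 5.13]{RW17}, the Renault--Williams criterion for amenability of the semidirect product groupoid of a locally compact directed semigroup action, and then to transfer amenability to $\BPG{\Lambda}$ using that it is a closed subgroupoid of $\PG{\Lambda}$.

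First I would verify that the hypotheses of \cite[Theorem 5.13]{RW17} are in place. By \cref{thm:PS_semigroup_action_locally_compact}, $(\PS{\Lambda},P,T)$ is a locally compact and directed semigroup action of the subsemigroup $P$ of $Q$, and by \cref{thm:path_spaces} the space $\PS{\Lambda}$ is second-countable. Since $Q$ is assumed countable it is second-countable as a discrete group, and since $Q$ is amenable, \cite[Theorem 5.13]{RW17} applies and shows the semidirect product groupoid $\PG{\Lambda} = \sdpg{\PS{\Lambda}}{P}{T}$ is (measurewise) amenable. As $\PG{\Lambda}$ is second-countable, Hausdorff and \'etale by \cref{thm:path_groupoids}, measurewise amenability coincides with topological and Borel amenability by \cite{AR00} and \cite[Corollary 2.15]{Ren15}, so $\PG{\Lambda}$ is amenable in every relevant sense.

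It remains to handle $\BPG{\Lambda}$. By \cref{lem:BPS_closed_invariant}, $\BPS{\Lambda}$ identifies with a closed invariant subset of $\unitspace{\PG{\Lambda}}$, so $\BPG{\Lambda} = \reduction{\PG{\Lambda}}{\BPS{\Lambda}}$ is a closed subgroupoid of the \'etale groupoid $\PG{\Lambda}$ (\cref{thm:path_groupoids}); amenability passes to closed subgroupoids of \'etale groupoids by \cite[Proposition 10.1.14]{SSW20}, so $\BPG{\Lambda}$ is amenable. The only point that needs attention is the bookkeeping of matching our definitions to the precise hypotheses of \cite[Theorem 5.13]{RW17} and confirming that the notion of amenability it delivers is the one we want; because $\PG{\Lambda}$ is second-countable, Hausdorff and \'etale this is routine, so I do not anticipate a genuine obstacle here.
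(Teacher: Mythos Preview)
Your proposal is correct and follows precisely the argument the paper gives in the paragraph preceding the proposition: apply \cite[Theorem 5.13]{RW17} to the locally compact directed action from \cref{thm:PS_semigroup_action_locally_compact} to get amenability of $\PG{\Lambda}$, use that the notions of amenability coincide because $\PG{\Lambda}$ is second-countable Hausdorff \'etale, and pass to $\BPG{\Lambda}$ as a closed subgroupoid via \cite[Proposition 10.1.14]{SSW20}. There is nothing to add.
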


\begin{corollary}
    \label{cor:amenable}
    For any $k$-graph $\Lambda$ with $\FA{\Lambda} \neq \emptyset$, $\PG{\Lambda}$ and $\BPG{\Lambda}$ are amenable. 
\end{corollary}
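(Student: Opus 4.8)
The plan is to read this off directly from \cref{prop:amenable}. The only thing that changes between \cref{prop:amenable} and \cref{cor:amenable} is that the hypothesis ``$Q$ is countable and amenable'' has been dropped, so the first step is to observe that for a $k$-graph this hypothesis is automatic. By definition, a $k$-graph is a $P$-graph for which $(Q,P) = (\mathbb{Z}^k,\N^k)$, so here the ambient group is $Q = \mathbb{Z}^k$. This group is countable, and it is abelian and hence amenable. (One should also note in passing that $(\mathbb{Z}^k,\N^k)$ is indeed a weakly quasi-lattice ordered group---in fact it is lattice ordered, with least upper bounds computed coordinatewise---so that the running hypotheses of \cref{prop:amenable} are genuinely satisfied.)

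The second step is simply to invoke \cref{prop:amenable}: since $\Lambda$ is by assumption a $k$-graph with $\FA{\Lambda}\neq\emptyset$, and $Q = \mathbb{Z}^k$ is countable and amenable, \cref{prop:amenable} gives at once that both $\PG{\Lambda}$ and $\BPG{\Lambda}$ are amenable. This is the entire argument.

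I do not anticipate any obstacle here; the corollary is a specialisation of \cref{prop:amenable} to the case where amenability of the group comes for free, and all the real work (the semidirect-product amenability criterion of Renault--Williams and its verification) has already been carried out in \cref{prop:amenable}. The point worth emphasising in the write-up is only \emph{why} one restricts to $k$-graphs rather than general $P$-graphs: a general weakly quasi-lattice ordered group $Q$ need not be amenable, so \cref{prop:amenable} cannot be applied unconditionally, whereas for $k$-graphs the group $\mathbb{Z}^k$ is always amenable.
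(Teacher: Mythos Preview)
Your proposal is correct and matches the paper's approach exactly: the corollary is stated without proof in the paper, being an immediate specialisation of \cref{prop:amenable} to the case $(Q,P)=(\mathbb{Z}^k,\N^k)$ where $Q$ is countable and abelian, hence amenable.
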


\subsection{Characterising the elements of the groupoids}

We characterise the elements of our path groupoids in \cref{prop:PS_groupoid_characterisation}, which is more subtle than it may seem (\cref{rem:PS_right_shift_not_well_defined}).

\newpage 
\begin{proposition}
    \label{prop:PS_groupoid_characterisation}
    \hypotheses
    For any $g \in \PS{\Lambda} \times Q \times \PS{\Lambda}$, the following are equivalent:
    \begin{enumerate}
        \item\label{PG1} $g \in \PG{\Lambda}$;
        \item\label{PG2} $g = (\shifton{\mu}{x},\d(\mu)\d(\nu)^{-1},\shifton{\nu}{x})$ for some $x \in \PS{\Lambda}$ and $\mu, \nu \in \Lambda$ with $\s(\mu) = \s(\nu) = \r(x)$.
    \end{enumerate}
\end{proposition}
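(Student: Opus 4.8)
The plan is to unwind the definition of the semidirect product groupoid $\PG{\Lambda} = \sdpg{\PS{\Lambda}}{P}{T}$ from \cref{sec:semidirect_product_groupoid_prelims} and to lean on the inversion identities $\shifton{\mu}{(\shiftoff{\mu}{x})} = x$ (valid whenever $\mu \in x$) and $\shiftoff{\lambda}{(\shifton{\lambda}{x})} = x$ of \cref{lem:filters_under_shifts}\ref{it:inversion}, the fact that $T$ takes values in $\PS{\Lambda}$ (\cref{lem:PS_closed_under_left_shift}), and the fact that $\pathfromto{y}{e}{p}$ is the unique element of degree $p$ in a filter $y$ (\cref{not:path_from_to}).

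To prove \ref{PG2} $\Rightarrow$ \ref{PG1}, I would start from $g = (\shifton{\mu}{x}, \d(\mu)\d(\nu)^{-1}, \shifton{\nu}{x})$ with $x \in \PS{\Lambda}$ and $\s(\mu) = \s(\nu) = \r(x)$, noting that the standing hypothesis $g \in \PS{\Lambda} \times Q \times \PS{\Lambda}$ already supplies $\shifton{\mu}{x}, \shifton{\nu}{x} \in \PS{\Lambda}$. Setting $m \coloneqq \d(\mu)$ and $n \coloneqq \d(\nu)$, the identity $\mu\r(x) = \mu$ shows $\mu \in \shifton{\mu}{x} \cap \Lambda^m$, so $\shifton{\mu}{x} \in \rpadom{m}$, $\pathfromto{(\shifton{\mu}{x})}{e}{m} = \mu$, and hence $\rpa{(\shifton{\mu}{x})}{m} = \shiftoff{\mu}{(\shifton{\mu}{x})} = x$ by \cref{lem:filters_under_shifts}\ref{it:inversion}. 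The symmetric computation gives $\shifton{\nu}{x} \in \rpadom{n}$ and $\rpa{(\shifton{\nu}{x})}{n} = x$. Since $\d(\mu)\d(\nu)^{-1} = mn^{-1}$, the triple $g$ meets the defining conditions for $\PG{\Lambda}$.

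To prove \ref{PG1} $\Rightarrow$ \ref{PG2}, I would write $g = (x', q, y')$ and fix $m, n \in P$ witnessing $g \in \PG{\Lambda}$, so $q = mn^{-1}$, $x' \in \rpadom{m}$, $y' \in \rpadom{n}$ and $\rpa{x'}{m} = \rpa{y'}{n}$. I would then set $x \coloneqq \rpa{x'}{m} \in \PS{\Lambda}$ (using \cref{lem:PS_closed_under_left_shift}) and $\mu \coloneqq \pathfromto{x'}{e}{m}$, $\nu \coloneqq \pathfromto{y'}{e}{n}$, so that $\d(\mu) = m$, $\d(\nu) = n$, and $x = \shiftoff{\mu}{x'} = \shiftoff{\nu}{y'}$. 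Applying \cref{lem:filters_under_shifts}\ref{it:inversion} with $\mu \in x'$ and $\nu \in y'$ yields $\shifton{\mu}{x} = x'$ and $\shifton{\nu}{x} = y'$. For the source conditions, I would observe that $\mu\s(\mu) = \mu \in x'$ forces the unit $\s(\mu)$ into $\shiftoff{\mu}{x'} = x$, whence $\r(x) = \s(\mu)$ by uniqueness of the unit of a filter, and similarly $\r(x) = \s(\nu)$. Then $g = (\shifton{\mu}{x}, \d(\mu)\d(\nu)^{-1}, \shifton{\nu}{x})$ is a representation of the required form.

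The only genuinely delicate point — the reason, flagged at \cref{ex:PS_right_shift_not_well_defined}, that this is subtler than it looks — is that $\PS{\Lambda}$ is not closed under right-shift maps, so one cannot simply form $\shifton{\mu}{x}$ and assume it is a path. In the direction \ref{PG2} $\Rightarrow$ \ref{PG1} this is absorbed into the standing hypothesis on $g$; in the direction \ref{PG1} $\Rightarrow$ \ref{PG2} it is handled by recovering $x$ as the $T$-image $\rpa{x'}{m}$, which forces $\shifton{\mu}{x}$ and $\shifton{\nu}{x}$ to be exactly the paths $x'$ and $y'$ one started with. Apart from this, I anticipate no serious obstacle: the argument is bookkeeping with the shift-map identities and the uniqueness of units in filters.
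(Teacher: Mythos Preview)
Your proposal is correct and follows essentially the same route as the paper's proof: in both directions one passes between $(y,q,z)$ and $(\shifton{\mu}{x},\d(\mu)\d(\nu)^{-1},\shifton{\nu}{x})$ by setting $x = \rpa{y}{m} = \rpa{z}{n}$ and invoking the inversion identities of \cref{lem:filters_under_shifts}\ref{it:inversion}, using the standing hypothesis $g \in \PS{\Lambda} \times Q \times \PS{\Lambda}$ to sidestep the failure of $\PS{\Lambda}$ to be closed under right shifts. Your write-up is in fact slightly more careful than the paper's, which leaves the verification of $\s(\mu) = \s(\nu) = \r(x)$ implicit.
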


\begin{remark}
    \label{rem:PS_right_shift_not_well_defined}
    The hypothesis that $g \in \PS{\Lambda} \times Q \times \PS{\Lambda}$ in the statement of \cref{prop:PS_groupoid_characterisation} is crucial. 
    Otherwise, \ref{PG2} would not imply \ref{PG1} because there are $x \in \PS{\Lambda}$ and $\mu \in \Lambda$ with $\s(\mu) =\r(x)$ such that $\shifton{\mu}{x} \notin \PS{\Lambda}$ (see \cref{ex:PS_right_shift_not_well_defined}).
\end{remark}

\begin{proof}[Proof of \cref{prop:PS_groupoid_characterisation}]
    Fix $g = (y,q,z) \in \PS{\Lambda} \times Q \times \PS{\Lambda}$.
    If $g \in \PG{\Lambda}$, then there are $m,n \in P$ such that $q = mn^{-1}$, $y \in \rpadom{m}$, $z \in \rpadom{n}$, and $\rpa{y}{m} = \rpa{z}{n}$.
    Say $\mu \in \Lambda^m \cap y$ and $\nu \in \Lambda^n \cap z$.
    Let $x \coloneqq \shiftoff{\mu}{y} = \rpa{y}{m} = \rpa{z}{n} = \shiftoff{\nu}{z} \in \PS{\Lambda}$. 
    Then, we have $g = (\shifton{\mu}{x},\d(\mu)\d(\nu)^{-1},\shifton{\nu}{x})$.
    Now suppose $g = (\shifton{\mu}{x},\d(\mu)\d(\nu)^{-1},\shifton{\nu}{x})$ for some $x \in \PS{\Lambda}$ and $\mu, \nu \in \Lambda$ with $\s(\mu) = \s(\nu) = \r(x)$.
    We know $\shifton{\mu}{x} = y \in \PS{\Lambda}$ by assumption.
    Since $\mu \in \Lambda^{\d(\mu)} \cap y$, we have $(y,\d(\mu)) \in \PS{\Lambda} * P$.
    Similarly, $(z,\d(\nu)) \in \PS{\Lambda} * P$.
    Therefore, $y \in \rpadom{\d(\mu)}$, $z \in \rpadom{\d(\nu)}$, and
    \[
        \rpa{y}{\d(\mu)}
        = \shiftoff{\mu}{y}
        = \shiftoff{\mu}{(\shifton{\mu}{x})}
        = x
        = \shiftoff{\nu}{(\shifton{\nu}{x})}
        = \shiftoff{\nu}{z}
        = \rpa{z}{\d(\nu)}.
    \]
    Hence, $g \in \sdpg{\filters{\Lambda}}{P}{T}$.
\end{proof}

\subsection{Basis for the topology on the groupoids}

Recall from \cref{lem:sdpg_basic_basis} that, if $\mathcal{B}$ is a basis for a space $X$ and $\sdpg{X}{P}{T}$ is the semidirect product groupoid of an action $T$ of $P$ on $X$, then the collection of $\sdpgcylinder{B}{m}{n}{C}$, where $m,n \in P$ and $B,C \in \mathcal{B}$, is a basis for the topology on $\sdpg{X}{P}{T}$.
Using the basis for the topology on $\PS{\Lambda}$ from \cref{thm:path_spaces}, the collection of
\(
    \sdpgcylinder{\Fell{\PS{\Lambda}}{\mu'}{J}}{m}{n}{\Fell{\PS{\Lambda}}{\nu'}{K}},
\)
where $m,n \in P$, $\mu', \nu' \in \FA{\Lambda}$ and $J,K$ are finite subsets of $\Lambda$, is a basis for the topology on $\PG{\Lambda}$.
However, we show in the following that the parameters $m$ and $n$ are redundant.

\begin{proposition}
    \label{prop:PS_groupoid_cylinders}
    \hypotheses
    The collection of 
    \[
        \Fellgpd{\PS{\Lambda}}{\mu}{J}{\nu}{K}
        \coloneqq
        \sdpgcylinder{\Fell{\PS{\Lambda}}{\mu}{J}}{\d(\mu)}{\d(\nu)}{\Fell{\PS{\Lambda}}{\nu}{K}}
    \]
    where $\mu, \nu \in \FA{\Lambda}$ and $J,K$ are finite subsets of $\Lambda$, is a basis for the topology on $\PG{\Lambda}$.    
\end{proposition}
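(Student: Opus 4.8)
The plan is to compare the proposed collection with the basis for $\PG{\Lambda}$ recorded just above the statement: by \cref{lem:sdpg_basic_basis} applied to the basis for $\PS{\Lambda}$ from \cref{thm:path_spaces}, the sets $\sdpgcylinder{\Fell{\PS{\Lambda}}{\mu'}{J}}{m}{n}{\Fell{\PS{\Lambda}}{\nu'}{K}}$ with $m,n \in P$, $\mu',\nu' \in \FA{\Lambda}$ and $J,K \subseteq \Lambda$ finite form a basis for $\PG{\Lambda}$. Each set $\Fellgpd{\PS{\Lambda}}{\mu}{J}{\nu}{K}$ is of this form, with $m = \d(\mu)$ and $n = \d(\nu)$, hence is open in $\PG{\Lambda}$; so it suffices to write each $\sdpgcylinder{\Fell{\PS{\Lambda}}{\mu'}{J}}{m}{n}{\Fell{\PS{\Lambda}}{\nu'}{K}}$ as a union of sets $\Fellgpd{\PS{\Lambda}}{\mu}{J}{\nu}{K}$. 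Concretely, given $g$ in such a basic cylinder, I would construct $\mu,\nu \in \FA{\Lambda}$ --- keeping the \emph{same} finite sets $J,K$ --- with $g \in \Fellgpd{\PS{\Lambda}}{\mu}{J}{\nu}{K} \subseteq \sdpgcylinder{\Fell{\PS{\Lambda}}{\mu'}{J}}{m}{n}{\Fell{\PS{\Lambda}}{\nu'}{K}}$; since the first collection is a basis, this yields the claim.

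To build $\mu$ and $\nu$, write $g = (x, mn^{-1}, y)$ with $x \in \Fell{\PS{\Lambda}}{\mu'}{J}$, $y \in \Fell{\PS{\Lambda}}{\nu'}{K}$ and $\rpa{x}{m} = \rpa{y}{n}$, and set $\mu'' \coloneqq \pathfromto{x}{e}{m}$, $\nu'' \coloneqq \pathfromto{y}{e}{n}$ and $z \coloneqq \shiftoff{\mu''}{x} = \shiftoff{\nu''}{y}$. Using directedness of $x$ and \cref{lem:PS_characterisation}, I would pick $\rho \in x \cap \FA{\Lambda}$ with $\mu',\mu'' \preceq \rho$ and write $\rho = \mu''\alpha$; then $\alpha \in \FA{\Lambda}$ by \cref{lem:finite_alignment}\ref{it:finite_alignment_closed_under_final_segments}, and $\alpha \in z$ as $\rho \in x$. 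Since $\alpha \in z = \shiftoff{\nu''}{y}$, \cref{lem:filters_under_shifts}\ref{it:inversion} gives $\nu''\alpha \in y$, and (using directedness of $y$ and \cref{lem:PS_characterisation} once more) I would pick $\sigma \in y \cap \FA{\Lambda}$ with $\nu',\nu''\alpha \preceq \sigma$, and write $\sigma = \nu''\alpha\beta$. Then I would set $\nu \coloneqq \sigma = \nu''\alpha\beta \in y \cap \FA{\Lambda}$ and $\mu \coloneqq \mu''\alpha\beta$; as $\sigma \in y$ forces $\alpha\beta \in \shiftoff{\nu''}{y} = z = \shiftoff{\mu''}{x}$, one gets $\mu \in x$, and $\mu = \rho\beta \in \FA{\Lambda}$ since $\FA{\Lambda}$ is a right ideal (\cref{lem:finite_alignment}\ref{it:finite_alignment_right_ideal}). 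Using \cref{lem:action_property}\ref{it:action_left} and functoriality of $\d$, one computes $\shiftoff{\mu}{x} = \shiftoff{\alpha\beta}{z} = \shiftoff{\nu}{y}$ and $\d(\mu)\d(\nu)^{-1} = \d(\mu'')\d(\nu'')^{-1} = mn^{-1}$; combined with $\mu \in x$, $\nu \in y$, $x \cap J = \emptyset$ and $y \cap K = \emptyset$, this gives $g \in \Fellgpd{\PS{\Lambda}}{\mu}{J}{\nu}{K}$.

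For the inclusion, I would take $(x',q',y') \in \Fellgpd{\PS{\Lambda}}{\mu}{J}{\nu}{K}$ and check it lies in $\sdpgcylinder{\Fell{\PS{\Lambda}}{\mu'}{J}}{m}{n}{\Fell{\PS{\Lambda}}{\nu'}{K}}$: heredity of $x'$ with $\mu' \preceq \mu \in x'$ gives $\mu' \in x'$, and likewise $\nu' \in y'$; the conditions $x' \cap J = \emptyset$, $y' \cap K = \emptyset$ and $q' = \d(\mu)\d(\nu)^{-1} = mn^{-1}$ are immediate. Since $\mu'' \preceq \mu \in x'$ with $\d(\mu'') = m$, one has $\mu'' = \pathfromto{x'}{e}{m}$, so $\rpa{x'}{m} = \shiftoff{\mu''}{x'}$, and similarly $\rpa{y'}{n} = \shiftoff{\nu''}{y'}$; also, since $\mu = \pathfromto{x'}{e}{\d(\mu)}$ and $\nu = \pathfromto{y'}{e}{\d(\nu)}$, the defining condition of $\Fellgpd{\PS{\Lambda}}{\mu}{J}{\nu}{K}$ reads $\shiftoff{\mu}{x'} = \shiftoff{\nu}{y'}$. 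Applying \cref{lem:action_property}\ref{it:action_left} to $\mu = \mu''\alpha\beta \in x'$ and $\nu = \nu''\alpha\beta \in y'$ turns this into $\shiftoff{\alpha\beta}{(\rpa{x'}{m})} = \shiftoff{\alpha\beta}{(\rpa{y'}{n})}$, and since $\alpha\beta$ lies in both $\rpa{x'}{m}$ and $\rpa{y'}{n}$, injectivity of the left-shift map associated to $\alpha\beta$ (\cref{lem:left_shift_cts_bijection}) forces $\rpa{x'}{m} = \rpa{y'}{n}$, so $(x',q',y') \in \sdpgcylinder{\Fell{\PS{\Lambda}}{\mu'}{J}}{m}{n}{\Fell{\PS{\Lambda}}{\nu'}{K}}$. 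The hard part is this two-sided refinement --- extending $\mu''$ and $\nu''$ by a common tail $\alpha\beta$ so that both $\mu$ and $\nu$ land in $\FA{\Lambda}$ while preserving $\d(\mu)\d(\nu)^{-1} = mn^{-1}$, and then recovering $\rpa{x'}{m} = \rpa{y'}{n}$ at nearby points from the identity $\shiftoff{\mu}{x'} = \shiftoff{\nu}{y'}$ via injectivity of left-shift maps; the remaining verifications (openness, the heredity arguments, keeping $J$ and $K$ fixed) are routine.
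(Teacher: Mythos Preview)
Your proof is correct and follows essentially the same approach as the paper's own proof. Both arguments refine the parameters $m,n,\mu',\nu'$ by extending $\pathfromto{x}{e}{m}$ and $\pathfromto{y}{e}{n}$ by a common tail in the filter $z = \rpa{x}{m} = \rpa{y}{n}$ so that the resulting $\mu,\nu$ lie in $\FA{\Lambda}$; the only cosmetic differences are that the paper performs its extensions in $x$, in $y$, and then in $z$ (three directedness steps), whereas you extend in $x$, transfer via $z$ to $y$, and extend in $y$, and that for the reverse inclusion the paper applies the right-shift map explicitly while you invoke injectivity of the left-shift map from \cref{lem:left_shift_cts_bijection}, which is equivalent.
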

\begin{proof}    
    Suppose $(x,mn^{-1},y)$ is in $\sdpgcylinder{\Fell{\PS{\Lambda}}{\mu'}{J}}{m}{n}{\Fell{\PS{\Lambda}}{\nu'}{K}}$ for some $m,n \in P$, $\mu', \nu' \in \FA{\Lambda}$, and finite subsets $J,K$ of $\Lambda$.
    Notice $\mu' \in x$ and $\nu' \in y$.
    Also, since $x \in \rpadom{m}$ and $y \in \rpadom{n}$, there are $\mu_m \in x \cap \Lambda^m$ and $\nu_n \in y \cap \Lambda^n$ with 
    \(
        w 
        \coloneqq \shiftoff{\mu_m}{x}
        = \rpa{x}{m}
        = \rpa{y}{n}
        = \shiftoff{\nu_n}{y}.
    \)
    Because $x$ and $y$ are directed, there are $\mu_m' \in \Lambda$ and $\nu_n' \in \Lambda$ such that $\mu', \mu_m \preceq \mu_m\mu_m' \in x$ and $\nu', \nu_n \preceq \nu_n\nu_n' \in y$.
    Then $\mu_m' \in \shiftoff{\mu_m}{x} = w$ and $\nu_n' \in \shiftoff{\nu_n}{y} = w$.
    That is, $\mu_m',\nu_n' \in w$ and $w$ is directed, so there are $\mu_m'',\nu_n'' \in \Lambda$ such that $\mu_m'\mu_m'' = \nu_n'\nu_n'' \in w$.
    Let $\mu \coloneqq \mu_m\mu_m'\mu_m''$, and let $\nu \coloneqq \nu_n\nu_n'\nu_n''$.
    Since $\mu' \preceq \mu$ and $\nu' \preceq \nu$, we know $\mu, \nu \in \FA{\Lambda}$ because $\FA{\Lambda}$ is a right ideal in $\Lambda$ (\cref{lem:finite_alignment}\ref{it:finite_alignment_right_ideal}).
    Then, $x \in \Fell{\PS{\Lambda}}{\mu}{J}$ and $y \in \Fell{\PS{\Lambda}}{\nu}{K}$.
    Also, because $\mu_m'\mu_m'' = \nu_n'\nu_n''$ we have 
    \[
        \d(\mu)\d(\nu)^{-1}
        = \d(\mu_m)\d(\mu_m'\mu_m'')\d(\nu_n'\nu_n'')^{-1}\d(\nu_n)^{-1}
        = \d(\mu_m)\d(\nu_n)^{-1}
        = mn^{-1}.
    \]
    Lastly, we have 
    \(
        \rpa{x}{\d(\mu)}
        = \shiftoff{(\mu_m'\mu_m'')}{(\shiftoff{\mu_m}{x})} 
        = \shiftoff{(\nu_m'\nu_m'')}{(\shiftoff{\nu_n}{y})} 
        = \rpa{y}{\d(\nu)}.
    \)
    Hence, $(x,mn^{-1},y) \in \Fellgpd{\PS{\Lambda}}{\mu}{J}{\nu}{K}$.

    Now fix $(x',\d(\mu)\d(\nu)^{-1},y') \in \Fellgpd{\PS{\Lambda}}{\mu}{J}{\nu}{K}$.
    Recall $\d(\mu)\d(\nu)^{-1} = mn^{-1}$. 
    Because $x'$ is hereditary and $\mu' \preceq \mu$, we have $\mu' \in x'$.
    Similarly, $\nu' \in y'$.
    Thus, $x' \in \Fell{\PS{\Lambda}}{\mu'}{J}$ and $y' \in \Fell{\PS{\Lambda}}{\nu'}{K}$.
    Also, $\mu_m \in x'$ and $\nu_n \in y'$.
    Since $\d(\mu_m) = m$ and $\d(\nu_n) = n$, we have $x' \in \rpadom{m}$ and $y' \in \rpadom{n}$.
    By assumption $\rpa{x'}{\d(\mu)} = \rpa{y'}{\d(\nu)}$, so $\shiftoff{\mu}{x'} = \shiftoff{\nu}{y'}$.
    By definition of $\mu$ and $\nu$, we have 
    \(
        \shiftoff{(\mu_m\mu_m'\mu_m'')}{x'}
        = \shiftoff{(\nu_n\nu_n'\nu_n'')}{y'}.
    \)
    Since $\mu_m'\mu_m'' = \nu_n'\nu_n''$, the above implies 
    \[
        \shiftoff{\mu_m}{x'}
        = \shifton{(\mu_m'\mu_m'')}{(\shiftoff{(\mu_m\mu_m'\mu_m'')}{x'})}
        = \shifton{(\nu_n'\nu_n'')}{(\shiftoff{(\nu_n\nu_n'\nu_n'')}{y'})}
        = \shiftoff{\nu_n}{y'}.
    \]
    Hence, $(x',\d(\mu)\d(\nu)^{-1},y') \in \sdpgcylinder{\Fell{\PS{\Lambda}}{\mu'}{J}}{m}{n}{\Fell{\PS{\Lambda}}{\nu'}{K}}$.
\end{proof}

\section{Comparisons with Spielberg's groupoids}
    \label{sec:reconciliation}

The purpose of this section is to compare our path groupoids from \cref{sec:path_groupoid} with relevant groupoids in the literature. 
The main theorem (\cref{thm:FA_groupoid_isomorphism}) is that, if $\Lambda$ is finitely aligned (i.e. $\FA{\Lambda} = \Lambda$), then our path and boundary-path groupoids are topologically isomorphic to analogous groupoids developed by Spielberg for the finitely aligned setting in \cite{Spi12, Spi14}.
Spielberg has also developed a groupoid approach for nonfinitely aligned $\Lambda$ in \cite{Spi20}. 
The author showed in his thesis \cite{Jon24} that the path groupoid $\PG{\tg}$ of the nonfinitely aligned $2$-graph $\tg$ from \cref{ex:tg} has a different structure of open invariant subsets of its unit space to that of Spielberg's groupoid of $\tg$ (and hence their associated Steinberg algebras \cite{Ste10} and C*-algebras \cite{Ren80} have different ideal structure). 
We omit this result because we intend to give a more comprehensive comparison of our construction and Spielberg's in future work. 
Lastly, we show that Spielberg's groupoid of the finitely aligned relative category of paths $(\FAr{\Lambda}, \Lambda)$ associated to each $P$-graph $\Lambda$ with $\FA{\Lambda} \neq \emptyset$ in \cref{sec:FA_relative_cop} does not generally coincide with our path groupoid $\PG{\Lambda}$.

\subsection{Path space in the finitely aligned case}

We begin by noting important simplifications to the path space when $\FA{\Lambda} = \Lambda$.

\begin{lemma}
    \label{lem:FA_PS_filters}
    \hypothesesFA
    Then, $\PS{\Lambda} = \filters{\Lambda}$ and $\BPS{\Lambda}$ is the closure of $\ultrafilters{\Lambda}$ in $\filters{\Lambda}$.
\end{lemma}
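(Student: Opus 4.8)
The plan is purely to unfold the relevant definitions; there is no real obstacle here. First I would invoke \cref{def:PS}: since $\FA{\Lambda} = \Lambda$, for any $x \in \filters{\Lambda}$ we have $x \cap \FA{\Lambda} = x$, which is nonempty because filters are nonempty by definition. Hence every filter lies in $\PS{\Lambda}$, so $\PS{\Lambda} = \filters{\Lambda}$.

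For the second assertion I would substitute this identity into \cref{def:BPS}. By that definition, $\BPS{\Lambda}$ is the closure of $\ultrafilters{\Lambda} \cap \PS{\Lambda}$ in $\PS{\Lambda}$. Since every ultrafilter is a filter, $\ultrafilters{\Lambda} \subseteq \filters{\Lambda} = \PS{\Lambda}$, so $\ultrafilters{\Lambda} \cap \PS{\Lambda} = \ultrafilters{\Lambda}$; and the closure is taken in $\PS{\Lambda} = \filters{\Lambda}$. Therefore $\BPS{\Lambda}$ is precisely the closure of $\ultrafilters{\Lambda}$ in $\filters{\Lambda}$, as claimed. The only thing to be slightly careful about is that closure is with respect to the subspace topology on $\PS{\Lambda}$ inherited from $\filters{\Lambda}$, but since these two spaces are literally equal in this case the point is moot.
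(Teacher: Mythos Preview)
Your proposal is correct and matches the paper's own proof essentially line for line: both arguments simply observe that filters are nonempty, so $x \cap \FA{\Lambda} = x \neq \emptyset$ gives $\PS{\Lambda} = \filters{\Lambda}$, and then substitute this into \cref{def:BPS} to conclude. Your parenthetical about the subspace topology is a harmless extra remark the paper omits.
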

\begin{proof}
    Because $\FA{\Lambda} = \Lambda$, and filters are nonempty, every filter contains some $\lambda \in \FA{\Lambda}$.
    That is, $\PS{\Lambda} = \filters{\Lambda}$.
    Then, $\ultrafilters{\Lambda} \cap \PS{\Lambda} = \ultrafilters{\Lambda}$, so $\BPS{\Lambda}$ is the closure of $\ultrafilters{\Lambda}$ in $\filters{\Lambda}$.
\end{proof}

While discussing the finitely aligned case, we denote the path space by $\filters{\Lambda}$ because of \cref{lem:FA_PS_filters}. 
A crucial difference in the finitely aligned case is that the parameter $K$ for the usual basis for the path space can be assumed to be a subset of $\mu\Lambda$, as follows. 
Among other things, this makes left-shift maps open (as well as being continuous bijections as in \cref{lem:left_shift_cts_bijection}).

\begin{lemma}
    \label{lem:FA_basis_for_topology_on_filters}
    \hypothesesFA
    The collection of 
    \(
        \Fell{\filters{\Lambda}}{\mu}{K},
    \)
    where $\mu \in \Lambda$ and $K \subseteq \mu\Lambda$ is finite, is a basis for $\filters{\Lambda}$.
\end{lemma}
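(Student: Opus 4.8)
The plan is to upgrade the basis from \cref{lem:basis_for_topology_on_filters} by shrinking the excluded set. That lemma says the collection of $\Fell{\filters{\Lambda}}{\mu}{K}$, where $\mu \in \Lambda$ and $K \subseteq \Lambda$ is finite, is a basis for $\filters{\Lambda}$. I will show that each such set \emph{equals} a set $\Fell{\filters{\Lambda}}{\mu}{K'}$ with $\mu \in \Lambda$ and $K' \subseteq \mu\Lambda$ finite. Since the sets $\Fell{\filters{\Lambda}}{\mu'}{K'}$ with $K' \subseteq \mu'\Lambda$ finite are open, and every open subset of $\filters{\Lambda}$ is a union of sets of the first kind, it is then a union of sets of the second kind, so they form a basis.

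Fix $\mu \in \Lambda$ and a finite $K \subseteq \Lambda$. Since $\FA{\Lambda} = \Lambda$, for each $\kappa \in K$ finite alignment at $(\mu,\kappa)$ provides a finite $J_\kappa \subseteq \Lambda$ with $\mu\Lambda \cap \kappa\Lambda = \bigcup_{\lambda \in J_\kappa}\lambda\Lambda$. For every $\lambda \in J_\kappa$ we have $\lambda = \lambda\s(\lambda) \in \lambda\Lambda \subseteq \mu\Lambda \cap \kappa\Lambda$, so $J_\kappa \subseteq \mu\Lambda$ automatically (and also $J_\kappa \subseteq \kappa\Lambda$). Put $K' \coloneqq \bigcup_{\kappa \in K} J_\kappa$, a finite subset of $\mu\Lambda$.

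The heart of the argument is the following claim: for any filter $y$ with $\mu \in y$ and any $\kappa \in K$, one has $\kappa \in y$ if and only if $y \cap J_\kappa \neq \emptyset$. For the forward implication, directedness of $y$ gives $\sigma \in y$ with $\mu, \kappa \preceq \sigma$, hence $\sigma \in \mu\Lambda \cap \kappa\Lambda = \bigcup_{\lambda \in J_\kappa}\lambda\Lambda$, so $\lambda \preceq \sigma$ for some $\lambda \in J_\kappa$, and heredity of $y$ gives $\lambda \in y$. For the converse, if $\lambda \in y \cap J_\kappa$ then $\kappa \preceq \lambda$ because $J_\kappa \subseteq \kappa\Lambda$, so $\kappa \in y$ by heredity. (When $\mu\Lambda \cap \kappa\Lambda = \emptyset$ we must have $J_\kappa = \emptyset$, and then both sides of the equivalence are false.) Quantifying over $\kappa \in K$ shows that, among filters $y$ containing $\mu$, the condition $y \cap K = \emptyset$ is equivalent to $y \cap K' = \emptyset$; since both $\Fell{\filters{\Lambda}}{\mu}{K}$ and $\Fell{\filters{\Lambda}}{\mu}{K'}$ consist of filters containing $\mu$, this gives $\Fell{\filters{\Lambda}}{\mu}{K} = \Fell{\filters{\Lambda}}{\mu}{K'}$, completing the proof.

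I do not expect a serious obstacle here; the only points requiring care are the combined use of directedness and heredity in the claimed equivalence, the observation that $J_\kappa \subseteq \mu\Lambda$ comes for free from $\lambda \in \lambda\Lambda$, and the remark that restricting attention to filters containing $\mu$ makes the $K$-exclusion and $K'$-exclusion conditions cut out the same filters.
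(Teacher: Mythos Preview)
Your proof is correct and follows essentially the same approach as the paper: both use finite alignment at $(\mu,\kappa)$ for each $\kappa$ in the excluded set to produce finite $J_\kappa \subseteq \mu\Lambda$, take the union, and invoke the directed and hereditary properties of filters. The only difference is that you prove the stronger statement $\Fell{\filters{\Lambda}}{\mu}{K} = \Fell{\filters{\Lambda}}{\mu}{K'}$, whereas the paper establishes only the containment $\Fell{\filters{\Lambda}}{\mu}{K'} \subseteq \Fell{\filters{\Lambda}}{\mu}{K}$ (together with $x \in \Fell{\filters{\Lambda}}{\mu}{K'}$) needed for the basis refinement argument.
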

\begin{proof}
    Suppose $x \in \Fell{\filters{\Lambda}}{\mu}{K'}$, where $\mu \in \Lambda$ and $K' \subseteq \Lambda$ is finite.
    For each $\zeta \in K'$, since $\Lambda$ is finitely aligned, there is a finite (possibly empty) $J_\zeta \subseteq \Lambda$ such that 
    \(
        \mu\Lambda \cap \zeta\Lambda 
        = \bigcup_{\gamma \in J_\zeta} \gamma\Lambda.
    \)
    Then, $K \coloneqq \bigcup_{\zeta \in K'} J_\zeta$ is a finite subset of $\mu\Lambda$, and the directed and hereditary properties of filters imply
    \(
        x 
        \in \Fell{\filters{\Lambda}}{\mu}{K}
        \subseteq \Fell{\filters{\Lambda}}{\mu}{K'}.
    \)
\end{proof}

\begin{lemma}
    \label{lem:FA_basis_for_PG}
    \hypothesesFA
    The collection of 
    \(
        \Fellgpd{\filters{\Lambda}}{\kappa}{K}{\lambda}{L},
    \)
    where $\kappa, \lambda \in \Lambda$ and $K \subseteq \kappa\Lambda$ and $L \subseteq \lambda\Lambda$ are finite, is a basis for the topology on $\PG{\Lambda}$.
\end{lemma}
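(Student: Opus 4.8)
The plan is to obtain the claimed basis by refining the basis for $\PG{\Lambda}$ already produced in \cref{prop:PS_groupoid_cylinders}. Since $\FA{\Lambda} = \Lambda$ in the finitely aligned case, \cref{lem:FA_PS_filters} gives $\PS{\Lambda} = \filters{\Lambda}$, so \cref{prop:PS_groupoid_cylinders} says the collection of sets $\Fellgpd{\filters{\Lambda}}{\mu}{J}{\nu}{K}$ with $\mu, \nu \in \Lambda$ and $J, K$ finite subsets of $\Lambda$ is a basis for $\PG{\Lambda}$. Each set $\Fellgpd{\filters{\Lambda}}{\kappa}{K}{\lambda}{L}$ with $K \subseteq \kappa\Lambda$ and $L \subseteq \lambda\Lambda$ finite is of this form, hence open, so it suffices to show that every $\Fellgpd{\filters{\Lambda}}{\mu}{J}{\nu}{K}$ is a union of sets $\Fellgpd{\filters{\Lambda}}{\kappa}{K'}{\lambda}{L'}$ with $K' \subseteq \kappa\Lambda$ and $L' \subseteq \lambda\Lambda$.

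The key point is that one can keep $\kappa \coloneqq \mu$ and $\lambda \coloneqq \nu$ and merely shrink the exclusion sets. First I would record the evident monotonicity: for a fixed pair $m, n \in P$, if $U \subseteq U'$ and $V \subseteq V'$ are open subsets of $\filters{\Lambda}$, then $\sdpgcylinder{U}{m}{n}{V} \subseteq \sdpgcylinder{U'}{m}{n}{V'}$, directly from the definition of these cylinders in \cref{sec:semidirect_product_groupoid_prelims}. Then, given $g = (x, \d(\mu)\d(\nu)^{-1}, y) \in \Fellgpd{\filters{\Lambda}}{\mu}{J}{\nu}{K}$, I would apply the argument in the proof of \cref{lem:FA_basis_for_topology_on_filters} to $x$ and to $y$ separately: this produces finite sets $J' \subseteq \mu\Lambda$ and $K' \subseteq \nu\Lambda$ with $x \in \Fell{\filters{\Lambda}}{\mu}{J'} \subseteq \Fell{\filters{\Lambda}}{\mu}{J}$ and $y \in \Fell{\filters{\Lambda}}{\nu}{K'} \subseteq \Fell{\filters{\Lambda}}{\nu}{K}$.

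Finally, since the degree parameters $\d(\mu)$ and $\d(\nu)$ are unchanged and the identity $\rpa{x}{\d(\mu)} = \rpa{y}{\d(\nu)}$ is inherited from $g$, we get $g \in \Fellgpd{\filters{\Lambda}}{\mu}{J'}{\nu}{K'}$, and the monotonicity observation gives $\Fellgpd{\filters{\Lambda}}{\mu}{J'}{\nu}{K'} \subseteq \Fellgpd{\filters{\Lambda}}{\mu}{J}{\nu}{K}$; as $J' \subseteq \mu\Lambda$ and $K' \subseteq \nu\Lambda$, this is a set of the required form. This exhibits $\Fellgpd{\filters{\Lambda}}{\mu}{J}{\nu}{K}$ as a union of sets of the required form, completing the proof. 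I do not expect a real obstacle here: the only substantive step is shrinking $J$ and $K$ into $\mu\Lambda$ and $\nu\Lambda$, which is exactly the finite-alignment content of \cref{lem:FA_basis_for_topology_on_filters}, and the rest is a matter of holding the degree parameters fixed so that the refinement stays inside the original groupoid cylinder.
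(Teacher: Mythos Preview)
Your argument is correct and uses the same two ingredients as the paper's proof, namely \cref{lem:FA_basis_for_topology_on_filters} and \cref{prop:PS_groupoid_cylinders}, only in the reverse order: the paper first invokes \cref{lem:sdpg_basic_basis} together with \cref{lem:FA_basis_for_topology_on_filters} to obtain the basis $\sdpgcylinder{\Fell{\filters{\Lambda}}{\kappa}{K}}{m}{n}{\Fell{\filters{\Lambda}}{\lambda}{L}}$ with $K \subseteq \kappa\Lambda$ and $L \subseteq \lambda\Lambda$, and then repeats the argument of \cref{prop:PS_groupoid_cylinders} to drop the redundant $m,n$; you instead cite \cref{prop:PS_groupoid_cylinders} directly and then shrink the exclusion sets via \cref{lem:FA_basis_for_topology_on_filters}. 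Your ordering is marginally tidier since it quotes \cref{prop:PS_groupoid_cylinders} as a black box rather than reproving it, and one can even note that the replacement sets $J', K'$ produced in the proof of \cref{lem:FA_basis_for_topology_on_filters} are independent of the point $x$, so in fact $\Fellgpd{\filters{\Lambda}}{\mu}{J'}{\nu}{K'} = \Fellgpd{\filters{\Lambda}}{\mu}{J}{\nu}{K}$ outright, though your pointwise refinement already suffices.
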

\begin{proof}
    The collection of $\sdpgcylinder{\Fell{\filters{\Lambda}}{\kappa}{K}}{m}{n}{\Fell{\filters{\Lambda}}{\lambda}{L}}$, where $m, n \in P$, $\kappa, \lambda \in \Lambda$ and $K \subseteq \kappa\Lambda$ and $L \subseteq \lambda\Lambda$ are finite, is a basis for the topology on $\PG{\Lambda}$ because of \cref{lem:sdpg_basic_basis} and \cref{lem:FA_basis_for_topology_on_filters}.
    Then, the claim follows from similar arguments to those in the proof of \cref{prop:PS_groupoid_cylinders}.
\end{proof}

\subsection{Spielberg's groupoid in the finitely aligned case}
    \label{sec:Spielberg_groupoid}

We recall Spielberg's groupoid in the finitely aligned case from \cite[Page 729]{Spi12} and \cite[\S 2]{MS22}.
For each $\alpha \in \Lambda$ and $\beta_1, \dots, \beta_n \subseteq \alpha\Lambda \setminus \{\alpha\}$, let $E \coloneqq \alpha\Lambda \setminus \bigcup_{i = 1}^n \beta_i\Lambda$, and let $\hat{E} \coloneqq \setof{x \in \filters{\Lambda}}{E \supseteq x \cap \gamma\Lambda \text{ for some } \gamma \in x}$.
The collection of $\hat{E}$ is a basis of compact sets for a topology $\Spielbergstopology$ on $\filters{\Lambda}$.
(Note that $\filters{\Lambda}$ is denoted by ``$\Lambda^*$'' in \cite{Spi12, MS22}).
Let $\Lambda * \Lambda * \filters{\Lambda}$ be the set of $(\alpha, \beta, x) \in \Lambda \times \Lambda \times \filters{\Lambda}$ such that $\s(\alpha) = \s(\beta) = \r(x)$.
Recall $\shifton{\alpha}{x} \coloneqq \setof{\mu \in \Lambda}{\mu \preceq \alpha\gamma \text{ for some } \gamma \in x}$ for each $x \in \filters{\Lambda}$ with $\r(x) = \s(\alpha)$.
For any $(\alpha, \beta, x), (\alpha', \beta', x') \in \Lambda * \Lambda * \filters{\Lambda}$, let $(\alpha, \beta, x) \sim (\alpha', \beta', x')$ if there are $y \in \filters{\Lambda}$ and $\gamma, \gamma' \in \Lambda$ such that $x = \shifton{\gamma}{y}$, $x' = \shifton{\gamma'}{y}$, $\alpha\gamma = \alpha'\gamma'$, and $\beta\gamma = \beta'\gamma'$.
Then, $\sim$ is an equivalence relation, and we write $\Spielbergsgroupoid{\Lambda}$ for the set of equivalence classes of $\Lambda * \Lambda * \filters{\Lambda}$ with respect to $\sim$.
Let ${\Spielbergsgroupoid{\Lambda}}^{(2)}$ be the pairs $([\alpha, \beta, x], [\gamma, \delta, y]) \in \Spielbergsgroupoid{\Lambda} \times \Spielbergsgroupoid{\Lambda}$ that satisfy $\shifton{\beta}{x} = \shifton{\gamma}{y}$, in which case \cite[Lemma 4.12]{Spi14} implies there are $z \in \filters{\Lambda}$ and $\xi, \eta \in \Lambda$ such that $x = \shifton{\xi}{z}$, $y = \shifton{\eta}{z}$ and $\beta\xi = \gamma\eta$.
Define composition and inversion maps by
\(
    [\alpha, \beta, x][\gamma, \delta, y] \coloneqq [\alpha\xi, \delta\eta, z]
\)
and 
\(
    [\alpha, \beta, x]^{-1} \coloneqq [\beta, \alpha, x],
\)
respectively.
Given a basis $\mathcal{B}$ for the topology $\Spielbergstopology$ on $\filters{\Lambda}$, the collection of $[\alpha, \beta, B] \coloneqq \setof{[\alpha, \beta, x]}{x \in B}$, where $B \in \mathcal{B}$, is a basis under which $\Spielbergsgroupoid{\Lambda}$ is a locally compact Hausdorff \'etale groupoid.

\subsection{Isomorphism with Spielberg's groupoid in the finitely aligned case}
    \label{sec:isom_if_FA}

We show our path groupoid $\PG{\Lambda}$ is topologically isomorphic to $\Spielbergsgroupoid{\Lambda}$ when $\Lambda$ is finitely aligned (\cref{thm:FA_groupoid_isomorphism}).
Recall from \cref{lem:FA_basis_for_topology_on_filters} that the collection of $\Fell{\filters{\Lambda}}{\mu}{K}$, where $\mu \in \Lambda$ and $K \subseteq \mu\Lambda$ is finite, is a basis for $\filters{\Lambda}$ if $\Lambda$ is finitely aligned.

\begin{lemma}
    \label{lem:our_basic_sets_are_Spielberg_basic_sets}
    Let $(Q,P)$ be a weakly quasi-lattice ordered group, and let $\Lambda$ be a (not necessarily finitely aligned) $P$-graph with $\FA{\Lambda} \neq \emptyset$.
    For any $\mu \in \Lambda$ and finite $K \subseteq \mu\Lambda$, $\Fell{\filters{\Lambda}}{\mu}{K} = \hat{E}$ for $E \coloneqq \mu\Lambda \setminus \bigcup_{\nu \in K} \nu\Lambda$.
\end{lemma}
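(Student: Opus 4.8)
The plan is to establish the set equality $\Fell{\filters{\Lambda}}{\mu}{K} = \hat{E}$ by a direct double inclusion, using nothing beyond the hereditary and directed properties of filters and the basic fact that $\eta \preceq \zeta$ is equivalent to $\zeta \in \eta\Lambda$. Note that $\Fell{\filters{\Lambda}}{\mu}{K} = \setof{x \in \filters{\Lambda}}{\mu \in x \text{ and } x \cap K = \emptyset}$ by the conventions of \cref{sec:prelims}, and that $E = \mu\Lambda \setminus \bigcup_{\nu \in K}\nu\Lambda \subseteq \mu\Lambda$. Finite alignment is not needed anywhere.

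For the inclusion $\Fell{\filters{\Lambda}}{\mu}{K} \subseteq \hat{E}$, I would take $x \in \filters{\Lambda}$ with $\mu \in x$ and $x \cap K = \emptyset$, and use $\gamma \coloneqq \mu$ as the witness required in the definition of $\hat{E}$. It then suffices to check $x \cap \mu\Lambda \subseteq E$: if some $\zeta \in x \cap \mu\Lambda$ lay in $\nu\Lambda$ for some $\nu \in K$, then $\nu \preceq \zeta \in x$, so the hereditary property of $x$ would force $\nu \in x \cap K$, contradicting $x \cap K = \emptyset$. Hence $\zeta \in \mu\Lambda \setminus \bigcup_{\nu \in K}\nu\Lambda = E$, so $x \cap \mu\Lambda \subseteq E$ and $x \in \hat{E}$.

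For the reverse inclusion I would take $x \in \hat{E}$, with witness $\gamma \in x$ satisfying $x \cap \gamma\Lambda \subseteq E$. Since $\gamma \in x \cap \gamma\Lambda \subseteq E \subseteq \mu\Lambda$, we get $\mu \preceq \gamma \in x$, so $\mu \in x$ by heredity. To see $x \cap K = \emptyset$, suppose $\nu \in x \cap K$; since $\gamma, \nu \in x$ and $x$ is directed, there is $\delta \in x$ with $\gamma, \nu \preceq \delta$, whence $\delta \in x \cap \gamma\Lambda \subseteq E$ while at the same time $\nu \preceq \delta$ gives $\delta \in \nu\Lambda \subseteq \bigcup_{\nu' \in K}\nu'\Lambda$, so $\delta \notin E$ — a contradiction. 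Thus $x \in \Fell{\filters{\Lambda}}{\mu}{K}$. The argument is routine; the only mildly delicate point is this reverse inclusion, where one must extract from the abstract witness $\gamma$ both the membership $\mu \in x$ and disjointness from $K$, the key device being the common extension $\delta$ of $\gamma$ and a hypothetical element of $K \cap x$ that then both lies in and fails to lie in $E$. (One may also remark on the degenerate case $\mu \in K$: then $K \subseteq \mu\Lambda$ forces $E = \emptyset$ and both sets are empty, so there is nothing to prove.)
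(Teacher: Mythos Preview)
Your proof is correct and follows essentially the same approach as the paper's: both directions use $\gamma=\mu$ as the witness for the forward inclusion (invoking heredity to derive a contradiction from a hypothetical $\nu\in K$), and for the reverse inclusion both extract $\mu\in x$ from $\gamma\in E\subseteq\mu\Lambda$ and then use directedness to find a common extension of $\gamma$ and a putative $\nu\in K\cap x$ that simultaneously lies in and outside $E$. The only differences are cosmetic (variable names, your added remark on the degenerate case $\mu\in K$).
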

\begin{proof}
    Let $x \in \Fell{\filters{\Lambda}}{\mu}{K}$.
    Suppose for a contradiction that $E \not\supseteq x \cap \mu\Lambda$, so there is $\lambda \in x \cap \mu\Lambda$ such that $\lambda \notin E$.
    Since $E = \mu\Lambda \setminus \bigcup_{\nu \in K} \nu\Lambda$, we must have $\lambda \in \nu\Lambda$ for some $\nu \in K$.
    Since $x$ is hereditary, $\nu \in x$, which contradicts $x \in \Fell{\filters{\Lambda}}{\mu}{K}$.
    Hence, $E \supseteq x \cap \mu\Lambda$, which implies $x \in \hat{E}$.
    Now let $x \in \hat{E}$, so there is $\gamma \in x$ such that $E \supseteq x \cap \gamma\Lambda$.
    In particular, $\gamma \in E \subseteq \mu\Lambda$, so $\mu \in x$.
    Suppose for a contradiction that there is $\nu \in K \cap x$.
    Since $x$ is directed, there is $\lambda \in x$ such that $\gamma, \nu \preceq \lambda$.
    Thus, $E \supseteq x \cap \gamma\Lambda \ni \lambda$, so $\lambda \in E$. 
    However, $\lambda \in \nu\Lambda$, contradicting the definition of $E$.
    Hence, $x \in \Fell{\filters{\Lambda}}{\mu}{K}$.
\end{proof}
    
The only difference to the parameterization of the $\hat{E}$ in \cref{sec:Spielberg_groupoid} when defining $\Spielbergstopology$ and the parameterization of the $\hat{E}$ appearing in \cref{lem:our_basic_sets_are_Spielberg_basic_sets} is that our $K$ could contain $\mu$, in which case $\Fell{\filters{\Lambda}}{\mu}{K} = \emptyset$. 
That is, the basis for our topology on $\filters{\Lambda}$ from \cref{lem:FA_basis_for_topology_on_filters} differs from the basis for the topology $\Spielbergstopology$ on $\filters{\Lambda}$ only by $\emptyset \subseteq \filters{\Lambda}$. 
Therefore:

\begin{lemma}
    \label{lem:FA_recon_unit_spaces_coincide}
    \hypothesesFA
    Then, $\Spielbergstopology$ equals the topology on $\filters{\Lambda}$ given by the basis from \cref{lem:FA_basis_for_topology_on_filters}.
\end{lemma}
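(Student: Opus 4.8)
The plan is to produce a common basis for the two topologies, up to the single exceptional set $\emptyset$. I would begin by recording, via \cref{lem:FA_basis_for_topology_on_filters} (which is where the hypothesis $\FA{\Lambda} = \Lambda$ enters), that the collection $\mathcal{B}$ of sets $\Fell{\filters{\Lambda}}{\mu}{K}$, where $\mu \in \Lambda$ and $K \subseteq \mu\Lambda$ is finite, is a basis for the original (subspace) topology on $\filters{\Lambda}$. Thus it suffices to compare $\mathcal{B}$ with the collection of sets $\hat{E}$, $E = \alpha\Lambda \setminus \bigcup_{i=1}^n \beta_i\Lambda$ with $\beta_1, \dots, \beta_n \in \alpha\Lambda \setminus \{\alpha\}$, which by definition is a basis for $\Spielbergstopology$.

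Next I would show every member of $\mathcal{B}$ is $\Spielbergstopology$-open. Fix $\mu \in \Lambda$ and a finite $K \subseteq \mu\Lambda$. If $\mu \in K$, then $\Fell{\filters{\Lambda}}{\mu}{K} = \emptyset$, which is open in any topology. If $\mu \notin K$, then each $\nu \in K$ lies in $\mu\Lambda \setminus \{\mu\}$, so $E \coloneqq \mu\Lambda \setminus \bigcup_{\nu \in K} \nu\Lambda$ has exactly the form of the sets defining $\Spielbergstopology$; by \cref{lem:our_basic_sets_are_Spielberg_basic_sets} we get $\Fell{\filters{\Lambda}}{\mu}{K} = \hat{E}$, which is $\Spielbergstopology$-open. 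Hence the original topology is contained in $\Spielbergstopology$. For the reverse inclusion, given a basic $\Spielbergstopology$-open set $\hat{E}$ with $E = \alpha\Lambda \setminus \bigcup_{i=1}^n \beta_i\Lambda$ and each $\beta_i \in \alpha\Lambda \setminus \{\alpha\}$, I would set $K \coloneqq \{\beta_1, \dots, \beta_n\}$, a finite subset of $\alpha\Lambda$, and apply \cref{lem:our_basic_sets_are_Spielberg_basic_sets} again to obtain $\hat{E} = \Fell{\filters{\Lambda}}{\alpha}{K} \in \mathcal{B}$, so $\hat{E}$ is open in the original topology. Combining the two inclusions yields the equality of topologies.

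The argument is essentially bookkeeping: \cref{lem:FA_basis_for_topology_on_filters} and \cref{lem:our_basic_sets_are_Spielberg_basic_sets} carry all the weight, and the present lemma just matches up the two parameterizations. The only point requiring attention is the case $\mu \in K$, where $\mathcal{B}$ contains $\emptyset$ while Spielberg's basis does not (as already noted in the discussion preceding the statement); since $\emptyset$ is open in every topology this is harmless, and I do not expect any genuine obstacle.
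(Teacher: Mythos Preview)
Your proposal is correct and follows essentially the same approach as the paper. The paper's argument (contained in the paragraph immediately preceding the lemma) is that, by \cref{lem:our_basic_sets_are_Spielberg_basic_sets}, the basis of \cref{lem:FA_basis_for_topology_on_filters} differs from Spielberg's basis only by the set $\emptyset$, which is exactly what you prove---you simply spell out both inclusions more explicitly.
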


\begin{theorem}
    \label{thm:FA_groupoid_isomorphism}
    \hypothesesFA
    The map 
    \[
        \Spielbergsgroupoid{\Lambda} \to \PG{\Lambda},
        \quad 
        [\alpha, \beta, x] \mapsto (\shifton{\alpha}{x}, \d(\alpha)\d(\beta)^{-1}, \shifton{\beta}{x})
    \]
    is a topological groupoid isomorphism. 
    Moreover, the reductions $\Spielbergsgroupoid{\Lambda}|_{\partial\Lambda}$ and $\BPG{\Lambda}$ are topologically isomorphic.
\end{theorem}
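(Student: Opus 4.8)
The plan is to show that the stated map, which I will call $\Phi$, is a well-defined bijective groupoid homomorphism onto $\PG{\Lambda}$, then that it is a homeomorphism by matching a basis for $\Spielbergsgroupoid{\Lambda}$ with a basis for $\PG{\Lambda}$; the reduction statement follows once one checks that $\Phi$ carries $\partial\Lambda$ onto $\BPS{\Lambda}$. Throughout, the key structural inputs are the ``action'' identities of \cref{lem:action_property} and \cref{lem:filters_under_shifts}, the characterisation \cref{prop:PS_groupoid_characterisation}, and the basis lemmas of this section.

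First I would verify $\Phi$ is well defined with image in $\PG{\Lambda}$. If $(\alpha,\beta,x)\sim(\alpha',\beta',x')$, witnessed by $y,\gamma,\gamma'$, then \cref{lem:action_property}\ref{it:action_right} gives $\shifton{\alpha}{x}=\shifton{(\alpha\gamma)}{y}=\shifton{(\alpha'\gamma')}{y}=\shifton{\alpha'}{x'}$ and likewise for $\beta$, while functoriality of $\d$ gives $\d(\alpha)\d(\beta)^{-1}=\d(\alpha\gamma)\d(\beta\gamma)^{-1}=\d(\alpha')\d(\beta')^{-1}$ in $Q$; and since $\FA{\Lambda}=\Lambda$ we have $\PS{\Lambda}=\filters{\Lambda}$ by \cref{lem:FA_PS_filters}, so $\Phi([\alpha,\beta,x])\in\PG{\Lambda}$ by \cref{prop:PS_groupoid_characterisation}. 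The same proposition shows every element of $\PG{\Lambda}$ has the form $\Phi([\mu,\nu,w])$, so $\Phi$ is surjective. That $\Phi$ intertwines inversion is immediate, and for composition one uses $\shifton{(\alpha\xi)}{z}=\shifton{\alpha}{x}$ and $\shifton{(\delta\eta)}{z}=\shifton{\delta}{y}$ (again \cref{lem:action_property}\ref{it:action_right}) together with $\d(\beta\xi)=\d(\gamma\eta)$ to match the middle coordinates.

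The hard part is injectivity. Suppose $\Phi([\alpha,\beta,x])=\Phi([\alpha',\beta',x'])$, and set $p\coloneqq\shifton{\alpha}{x}=\shifton{\alpha'}{x'}$ and $q\coloneqq\shifton{\beta}{x}=\shifton{\beta'}{x'}$. Since $\alpha$ and $\alpha'$ both lie in the filter $p$ and $p=\shifton{\alpha}{x}$, I would choose a common extension of $\alpha$ and $\alpha'$ in $p$ of the form $\alpha\gamma$ with $\gamma\in x$; writing $\alpha\gamma=\alpha'\gamma'$, one gets $\gamma'\in x'$ because $\alpha'\gamma'\in p=\shifton{\alpha'}{x'}$. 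Put $y\coloneqq\shiftoff{\gamma}{x}$. Then $y=\shiftoff{(\alpha\gamma)}{p}=\shiftoff{\gamma'}{x'}$ by \cref{lem:action_property}\ref{it:action_left}, and $\shifton{\gamma}{y}=x$, $\shifton{\gamma'}{y}=x'$ by \cref{lem:filters_under_shifts}\ref{it:inversion}. It remains to see $\beta\gamma=\beta'\gamma'$: both belong to $q$, and from $\d(\alpha)\d(\beta)^{-1}=\d(\alpha')\d(\beta')^{-1}$ in $Q$ together with $\d(\alpha)\d(\gamma)=\d(\alpha\gamma)=\d(\alpha'\gamma')=\d(\alpha')\d(\gamma')$ one deduces $\d(\beta)\d(\gamma)=\d(\beta')\d(\gamma')$; since $\d$ restricted to the filter $q$ is injective, $\beta\gamma=\beta'\gamma'$. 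Thus $(\alpha,\beta,x)\sim(\alpha',\beta',x')$, so $\Phi$ is injective. I expect this degree bookkeeping, and the care needed to pick a single $\gamma,\gamma'$ that simultaneously straighten out the $\alpha$- and $\beta$-coordinates, to be the only genuinely delicate point.

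Finally, for the topology I would compute, by a hereditariness argument (using injectivity of left-shift maps from \cref{lem:left_shift_cts_bijection} for the reverse inclusion), that for $\mu\in\Lambda$ with $\r(\mu)=\s(\alpha)=\s(\beta)$ and finite $K\subseteq\mu\Lambda$,
\[
    \Phi\big([\alpha,\beta,\Fell{\filters{\Lambda}}{\mu}{K}]\big)=\Fellgpd{\filters{\Lambda}}{\alpha\mu}{\alpha K}{\beta\mu}{\beta K}.
\]
The sets $[\alpha,\beta,\Fell{\filters{\Lambda}}{\mu}{K}]$ form a basis for $\Spielbergsgroupoid{\Lambda}$ by \cref{lem:FA_recon_unit_spaces_coincide} together with the description of the topology on $\Spielbergsgroupoid{\Lambda}$ in \cref{sec:Spielberg_groupoid}, and, after noting that every $\Fellgpd{\filters{\Lambda}}{\kappa}{K'}{\lambda}{L'}$ can be rewritten with matched exclusion sets $\Fellgpd{\filters{\Lambda}}{\kappa}{\kappa M}{\lambda}{\lambda M}$, the sets $\Fellgpd{\filters{\Lambda}}{\alpha\mu}{\alpha K}{\beta\mu}{\beta K}$ form a basis for $\PG{\Lambda}$ by \cref{lem:FA_basis_for_PG}. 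A bijection of sets carrying one basis onto another is a homeomorphism, so $\Phi$ is a topological groupoid isomorphism. For the reduction statement: under the identifications $x\leftrightarrow[\r(x),\r(x),x]$ and $x\leftrightarrow(x,e,x)$ the map $\Phi$ is the identity on $\filters{\Lambda}=\PS{\Lambda}$, the two topologies on $\filters{\Lambda}$ agree by \cref{lem:FA_recon_unit_spaces_coincide}, so $\Phi$ carries the closure of $\ultrafilters{\Lambda}$ onto itself; this closure is Spielberg's $\partial\Lambda$ on one side and, by \cref{lem:FA_PS_filters}, is $\BPS{\Lambda}$ on the other. Hence $\Phi$ restricts to a topological isomorphism $\Spielbergsgroupoid{\Lambda}|_{\partial\Lambda}\to\BPG{\Lambda}$.
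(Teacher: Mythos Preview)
Your proposal is correct and follows essentially the same approach as the paper's proof: well-definedness and surjectivity via \cref{prop:PS_groupoid_characterisation}, injectivity by choosing a common extension $\alpha\gamma=\alpha'\gamma'$ in the filter $\shifton{\alpha}{x}$ and using injectivity of $\d$ on the filter $\shifton{\beta}{x}$ to force $\beta\gamma=\beta'\gamma'$, and the homeomorphism by computing $\Phi([\alpha,\beta,\Fell{\filters{\Lambda}}{\mu}{K}])=\Fellgpd{\filters{\Lambda}}{\alpha\mu}{\alpha K}{\beta\mu}{\beta K}$ and then rewriting an arbitrary basic set of \cref{lem:FA_basis_for_PG} in this matched form with $M=(\shiftoff{\kappa}{K'})\cup(\shiftoff{\lambda}{L'})$. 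The only differences are organisational (you verify the homomorphism property before injectivity, the paper after), and your degree manipulation for $\d(\beta\gamma)=\d(\beta'\gamma')$ is the same computation the paper carries out.
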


\begin{proof}
    Our first aim is to show the map is a well-defined bijection.
    Let $[\alpha, \beta, x] \in \Spielbergsgroupoid{\Lambda}$.
    Since $\rpa{\shifton{\alpha}{x}}{\d(\alpha)} = x = \rpa{\shifton{\beta}{x}}{\d(\beta)}$, $(\shifton{\alpha}{x}, \d(\alpha)\d(\beta)^{-1}, \shifton{\beta}{x}) \in \PG{\Lambda}$.
    If $[\alpha, \beta, x] = [\alpha', \beta', x']$, then there are $y \in \filters{\Lambda}$ and $\gamma, \gamma' \in \Lambda$ such that $x = \shifton{\gamma}{y}$, $x' = \shifton{\gamma'}{y}$, $\alpha\gamma = \alpha'\gamma'$, and $\beta\gamma = \beta'\gamma'$.
    Then,
    \(
        \shifton{\alpha}{x} 
        = \shifton{\alpha}{(\shifton{\gamma}{y})}
        = \shifton{(\alpha\gamma)}{y}
        = \shifton{(\alpha'\gamma')}{y}
        = \shifton{\alpha'}{(\shifton{\gamma'}{y})}
        = \shifton{\alpha'}{x'}.
    \)
    A similar argument shows $\shifton{\beta}{x} = \shifton{\beta'}{x'}$.
    Lastly, $\d(\alpha)\d(\beta)^{-1} = \d(\alpha')\d(\beta')^{-1}$ follows from the identities $\alpha \gamma = \alpha' \gamma'$ and $\beta \gamma = \beta' \gamma'$ and the fact that $\d$ is a functor.
    Hence, the map is well-defined.
    The map is surjective because any $(x, q, y) \in \PG{\Lambda}$ is of the form $(\shifton{\mu}{x}, \d(\mu)\d(\nu)^{-1}, \shifton{\nu}{x})$ for some $x \in \filters{\Lambda}$ by \cref{prop:PS_groupoid_characterisation}, which is the image of $[\mu, \nu, x]$.
    For injectivity, suppose $[\alpha, \beta, x], [\alpha', \beta', x'] \in \Spielbergsgroupoid{\Lambda}$ are such that $(\shifton{\alpha}{x}, \d(\alpha)\d(\beta)^{-1}, \shifton{\beta}{x}) = (\shifton{\alpha'}{x'}, \d(\alpha')\d(\beta')^{-1}, \shifton{\beta'}{x'})$.
    Notice $\alpha, \alpha' \in \shifton{\alpha}{x} = \shifton{\alpha'}{x'}$, which is directed, so there are $\gamma, \gamma' \in \Lambda$ such that $\alpha\gamma = \alpha'\gamma' \in \shifton{\alpha}{x} = \shifton{\alpha'}{x'}$.
    Notice $\gamma \in x$, so $y \coloneqq \shiftoff{\gamma}{x} \in \filters{\Lambda}$.
    Then, $x = \shifton{\gamma}{(\shiftoff{\gamma}{x})} = \shifton{\gamma}{y}$, and $x' = \shifton{\gamma'}{(\shiftoff{\gamma'}{x'})} = \shifton{\gamma'}{y}$.
    To show $\beta\gamma = \beta'\gamma'$, we show $\beta\gamma, \beta'\gamma' \in \shifton{\beta}{x}$ and $\d(\beta\gamma) = \d(\beta'\gamma')$, and we apply the injectivity of $\d|_{\shifton{\beta}{x}}$.
    Since $\gamma \in x$, we have $\beta\gamma \in \shifton{\beta}{x}$.
    Also, $\gamma' \in x'$, so $\beta'\gamma' \in \shifton{\beta'}{x'} = \shifton{\beta}{x}$.
    The identity $\alpha\gamma = \alpha'\gamma'$ implies $\d(\alpha')^{-1}\d(\alpha) = \d(\gamma')\d(\gamma)^{-1}$.
    If $\d(\gamma')\d(\gamma)^{-1} = \d(\beta')^{-1}\d(\beta)$, then $\d(\beta\gamma) = \d(\beta'\gamma')$, so it remains to show $\d(\alpha')^{-1}\d(\alpha) = \d(\beta')^{-1}\d(\beta)$, which follows from the assumption that $\d(\alpha)\d(\beta)^{-1} = \d(\alpha')\d(\beta')^{-1}$.
    Therefore, $\beta\gamma = \beta'\gamma'$ by the injectivity of $\d|_{\shifton{\beta}{x}}$.
    Thus, $[\alpha, \beta, x] = [\alpha', \beta', x']$, so the map is a bijection.

    Next we aim to show the map is a homeomorphism.
    The image of an arbitrary basic open set $[\alpha, \beta, \Fell{\filters{\Lambda}}{\mu}{K}]$, where $\alpha, \beta, \mu \in \Lambda$ and $K \subseteq \mu\Lambda$ is finite, is the set of $(\shifton{\alpha}{x}, \d(\alpha)\d(\beta)^{-1}, \shifton{\beta}{x})$ such that $x \in \Fell{\filters{\Lambda}}{\mu}{K}$.
    We claim this image equals 
    \[
        I \coloneqq \Fellgpd{\filters{\Lambda}}{\alpha\mu}{\setof{\alpha\nu}{\nu \in K}}{\beta\mu}{\setof{\beta\nu}{\nu \in K}}.
    \]
    Consider $(\shifton{\alpha}{x}, \d(\alpha)\d(\beta)^{-1}, \shifton{\beta}{x})$ for some $x \in \Fell{\filters{\Lambda}}{\mu}{K}$.
    Observe $\alpha\mu \in \shifton{\alpha}{x}$.
    If there is a $\nu \in K$ such that $\alpha\nu \in \shifton{\alpha}{x}$, then $\nu \in x \cap K$, which is a contradiction.
    A similar argument applies to $\shifton{\beta}{x}$, so we have $\shifton{\alpha}{x} \in \Fell{\filters{\Lambda}}{\alpha\mu}{\setof{\alpha\nu}{\nu \in K}}$ and 
    $\shifton{\beta}{x} \in \Fell{\filters{\Lambda}}{\beta\mu}{\setof{\beta\nu}{\nu \in K}}$.
    Lastly, notice $\d(\alpha)\d(\beta)^{-1} = \d(\alpha\mu)\d(\beta\mu)^{-1}$, and
    \[
        \rpa{(\shifton{\alpha}{x})}{\d(\alpha\mu)} 
        = \shiftoff{(\alpha\mu)}{(\shifton{\alpha}{x})} 
        = \shiftoff{\mu}{x} 
        = \shiftoff{(\beta\mu)}{(\shifton{\beta}{x})} 
        = \rpa{(\shifton{\beta}{x})}{\d(\beta\mu)}.
    \]
    Hence, $(\shifton{\alpha}{x}, \d(\alpha)\d(\beta)^{-1}, \shifton{\beta}{x}) \in I$.
    Now fix $(x, \d(\alpha\mu)\d(\beta\mu)^{-1}, y) \in I$, so $\shiftoff{(\alpha\mu)}{x} = \shiftoff{(\beta\mu)}{y}$, which implies $z \coloneqq \shiftoff{\alpha}{x} = \shiftoff{\beta}{y}$ and $(x, \d(\alpha\mu)\d(\beta\mu)^{-1}, y) = (\shifton{\alpha}{z}, \d(\alpha)\d(\beta)^{-1}, \shifton{\beta}{z})$.
    Since $\alpha\mu \in x$, $\mu \in \shiftoff{\alpha}{x} = z$.
    There cannot be $\nu \in K \cap z$, otherwise $\alpha\nu \in x$.
    Hence, $z \in \Fell{\filters{\Lambda}}{\mu}{K}$, and so $[\alpha, \beta, z] \in [\alpha, \beta, \Fell{\filters{\Lambda}}{\mu}{K}]$.
    That is, the image of the basic open set $[\alpha, \beta, \Fell{\filters{\Lambda}}{\mu}{K}]$ equals $I$.
    An arbitrary basic open set $\Fellgpd{\filters{\Lambda}}{\kappa}{K}{\lambda}{L}$ in $\PG{\Lambda}$ (see \cref{lem:FA_basis_for_PG}) is in the form of $I$ with $\alpha = \kappa$, $\beta = \lambda$, $\mu = \s(\alpha) = \s(\beta)$ and $J = \shiftoff{\kappa}{K} \cup \shiftoff{\lambda}{L}$.
    Hence, basic open sets in $\PG{\Lambda}$ identify with basic open sets in $\Spielbergsgroupoid{\Lambda}$ via the map, so the map is a homeomorphism.

    We now aim to show the groupoids are isomorphic.
    If $([\alpha, \beta, x], [\gamma, \delta, y]) \in {\Spielbergsgroupoid{\Lambda}}^{(2)}$, then $\shifton{\beta}{x} = \shifton{\gamma}{y}$, which implies $((\shifton{\alpha}{x}, \d(\alpha)\d(\beta)^{-1}, \shifton{\beta}{x}), (\shifton{\gamma}{y}, \d(\gamma)\d(\delta)^{-1}, \shifton{\delta}{y})) \in \composablepairs{\PG{\Lambda}}$.
    The converse holds in the same way, so the map identifies $\composablepairs{\PG{\Lambda}}$ with $\composablepairs{{\Spielbergsgroupoid{\Lambda}}}$.
    Recall that $([\alpha, \beta, x], [\gamma, \delta, y]) \in {\Spielbergsgroupoid{\Lambda}}^{(2)}$ implies there are $z \in \filters{\Lambda}$ and $\xi, \eta \in \Lambda$ such that $x = \shifton{\xi}{z}$, $y = \shifton{\eta}{z}$ and $\beta\xi = \gamma\eta$. 
    Then, 
    \[
        [\alpha, \beta, x][\gamma, \delta, y] 
        = [\alpha\xi, \delta\eta, z]
        \mapsto (\shifton{(\alpha\xi)}{z}, \d(\alpha\xi)\d(\delta\eta)^{-1}, \shifton{(\delta\eta)}{z}).
    \]
    Also,
    \[
        (\shifton{\alpha}{x}, \d(\alpha)\d(\beta)^{-1}, \shifton{\beta}{x})(\shifton{\gamma}{y}, \d(\gamma)\d(\delta)^{-1}, \shifton{\delta}{y}) 
        = (\shifton{\alpha}{x}, \d(\alpha)\d(\beta)^{-1}\d(\gamma)\d(\delta)^{-1}, \shifton{\delta}{y}).
    \]
    We have $\shifton{(\alpha\xi)}{z} = \shifton{\alpha}{x}$ and $\shifton{(\delta\eta)}{z} = \shifton{\delta}{y}$.
    Lastly, 
    \begin{align*}        
        \d(\alpha)\d(\beta)^{-1}\d(\gamma)\d(\delta)^{-1} 
        &= \d(\alpha)\d(\xi)\d(\xi)^{-1}\d(\beta)^{-1}\d(\gamma)\d(\eta)\d(\eta)^{-1}\d(\delta)^{-1} \\
        &= \d(\alpha\xi)\d(\beta\xi)^{-1}\d(\gamma\eta)\d(\delta\eta)^{-1} \\
        &= \d(\alpha\xi)\d(\beta\xi)^{-1}\d(\beta\xi)\d(\delta\eta)^{-1} \\
        &= \d(\alpha\xi)\d(\delta\eta)^{-1}.
    \end{align*}
    Thus, the map preserves composition.
    It is immediate from the definitions of the inversions and the definition of the map that inversion is preserved too.
    Therefore, the map is a topological groupoid isomorphism.
    Our boundary-path space $\BPS{\Lambda}$ equals the boundary of $\Lambda$ defined by Spielberg on \cite[Page 729]{Spi12}, there too denoted by ``$\partial\Lambda$''.
    Our boundary-path groupoid $\BPG{\Lambda}$ is defined to be the reduction of $\PG{\Lambda}$ to $\BPS{\Lambda}$, which therefore coincides with the reduction $\Spielbergsgroupoid{\Lambda}|_{\partial\Lambda}$ of $\Spielbergsgroupoid{\Lambda}$ to $\BPS{\Lambda}$.
\end{proof}

\begin{remark}
    The author showed in \cite{Jon24} that our path and boundary-path groupoids coincide also with those of Renault and Williams \cite{RW17} and Yeend \cite{Yee07} in the discrete row-finite case.
\end{remark}

\subsection{Inverse semigroup model in the finitely aligned case}

In \cite[Definition 2.2]{OP20}, Ortega and Pardo associate an inverse semigroup $\mathcal{S}_\Lambda$ to any left cancellative small category $\Lambda$ (in particular, any $P$-graph). 
Exel's tight groupoid of $\mathcal{S}_\Lambda$ from \cite{Exe08} is denoted by $\mathcal{G}_{\mathrm{tight}}(\mathcal{S}_\Lambda)$.
When $\Lambda$ is finitely aligned, $\Spielbergsgroupoid{\Lambda}|_{\partial\Lambda}$ is topologically isomorphic to $\mathcal{G}_{\mathrm{tight}}(\mathcal{S}_\Lambda)$ by \cite[Lemma 4.9 and Proposition 5.2]{OP20}, so we have the following corollary to \cref{thm:FA_groupoid_isomorphism}.

\newpage
\begin{corollary}
    \label{cor:inverse_semigroup_model}
    \hypothesesFA
    The following groupoids are topologically isomorphic:
    \begin{itemize}
        \item $\BPG{\Lambda}$ from \cref{def:boundary_path_groupoid},
        \item $\Spielbergsgroupoid{\Lambda}|_{\partial\Lambda}$ from \cite{Spi20}, and 
        \item $\mathcal{G}_{\mathrm{tight}}(\mathcal{S}_\Lambda)$ from \cite{OP20}.
    \end{itemize}
\end{corollary}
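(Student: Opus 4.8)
The plan is to deduce all three isomorphisms by composing two that are already at hand. Since $\FA{\Lambda} = \Lambda$, \cref{lem:FA_PS_filters} gives $\PS{\Lambda} = \filters{\Lambda}$ and identifies $\BPS{\Lambda}$ with the closure of $\ultrafilters{\Lambda}$ in $\filters{\Lambda}$; as observed at the end of the proof of \cref{thm:FA_groupoid_isomorphism}, this is exactly Spielberg's boundary $\partial\Lambda$ from \cite[Page 729]{Spi12}. So the boundary spaces underlying the three groupoids in play coincide, and there is no ambiguity in the reductions $\Spielbergsgroupoid{\Lambda}|_{\partial\Lambda}$ and $\BPG{\Lambda}$.

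The first isomorphism is immediate from \cref{thm:FA_groupoid_isomorphism}: the topological groupoid isomorphism $\Spielbergsgroupoid{\Lambda} \to \PG{\Lambda}$ constructed there carries $\Spielbergsgroupoid{\Lambda}|_{\partial\Lambda}$ onto $\reduction{\PG{\Lambda}}{\BPS{\Lambda}} = \BPG{\Lambda}$ (\cref{def:boundary_path_groupoid}), which is precisely the content of the ``moreover'' clause of that theorem. For the second isomorphism I would appeal to the work of Ortega and Pardo directly: their inverse semigroup $\mathcal{S}_\Lambda$ from \cite[Definition 2.2]{OP20} is defined for any left cancellative small category, and every $P$-graph is such a category (indeed a category of paths), so $\mathcal{S}_\Lambda$ and its tight groupoid $\mathcal{G}_{\mathrm{tight}}(\mathcal{S}_\Lambda)$ are defined; when $\Lambda$ is finitely aligned, \cite[Lemma 4.9 and Proposition 5.2]{OP20} provide a topological groupoid isomorphism between $\mathcal{G}_{\mathrm{tight}}(\mathcal{S}_\Lambda)$ and Spielberg's reduction $\Spielbergsgroupoid{\Lambda}|_{\partial\Lambda}$.

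Composing these --- using that topological isomorphism of groupoids is symmetric and transitive --- shows $\BPG{\Lambda}$, $\Spielbergsgroupoid{\Lambda}|_{\partial\Lambda}$ and $\mathcal{G}_{\mathrm{tight}}(\mathcal{S}_\Lambda)$ are pairwise topologically isomorphic, as claimed. There is no substantial obstacle here; the only point requiring care is the bookkeeping that the boundary-path space appearing in each of the three sources is the same object, namely Spielberg's $\partial\Lambda$, which is settled by the identification $\BPS{\Lambda} = \partial\Lambda$ noted above together with the fact that Ortega and Pardo phrase their result in terms of Spielberg's groupoid.
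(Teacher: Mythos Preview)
Your proposal is correct and follows essentially the same route as the paper: the corollary is deduced by combining the ``moreover'' clause of \cref{thm:FA_groupoid_isomorphism} (giving $\BPG{\Lambda} \cong \Spielbergsgroupoid{\Lambda}|_{\partial\Lambda}$) with \cite[Lemma 4.9 and Proposition 5.2]{OP20} (giving $\Spielbergsgroupoid{\Lambda}|_{\partial\Lambda} \cong \mathcal{G}_{\mathrm{tight}}(\mathcal{S}_\Lambda)$), exactly as stated in the paragraph preceding the corollary. Your additional care in identifying $\BPS{\Lambda}$ with Spielberg's $\partial\Lambda$ via \cref{lem:FA_PS_filters} is appropriate bookkeeping that the paper handles implicitly.
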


\begin{remark}
    Another relevant inverse semigroup model for groupoids associated to higher-rank graphs is the inverse semigroup of \cite[Definition 4.1]{FMY05}, which we denote here by $\mathcal{S}_\Lambda^\mathrm{FMY}$.
    The author is unaware of a comparison between $\mathcal{S}_\Lambda$ from \cite{OP20} and $\mathcal{S}_\Lambda^\mathrm{FMY}$ beyond the observation in \cite{LSV24} that certain elements of $\mathcal{S}_\Lambda$ are ``closely related to the building blocks of'' $\mathcal{S}_\Lambda^\mathrm{FMY}$.
    If $\mathcal{S}_\Lambda$ is isomorphic to $\mathcal{S}_\Lambda^\mathrm{FMY}$, then our boundary-path groupoid coincides also with that of \cite{FMY05}.
\end{remark}

\subsection{Spielberg's groupoid of a relative category of paths in general}
    \label{sec:gpd_of_relative_cop}

Recall from \cref{sec:FA_relative_cop} that $(\FAr{\Lambda}, \Lambda)$ is a \textit{finitely aligned} relative category of paths for any $P$-graph $\Lambda$ with $\FA{\Lambda} \neq \emptyset$. 
Spielberg associates a groupoid $G(\Lambda', \Lambda)$ to each relative category of paths $(\Lambda', \Lambda)$ in \cite[Definition 14.5]{Spi20}.
In this section, we show $G(\FAr{\Lambda}, \Lambda)$ is not topologically isomorphic to our path groupoid $\PG{\Lambda}$ in general.
We first state \cite[Remark 3.7]{Spi14} explicitly:

\begin{proposition}[Remark 3.7 of \cite{Spi14}]
    If $(\Lambda', \Lambda)$ is a finitely aligned relative category of paths, then $G(\Lambda', \Lambda)$ is topologically isomorphic to the groupoid $G_{\Lambda'}^\mathrm{Spi}$ from \cref{sec:Spielberg_groupoid} of the finitely aligned category of paths $\Lambda'$.
\end{proposition}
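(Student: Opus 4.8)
This is \cite[Remark 3.7]{Spi14}; the plan is to make the reduction explicit. Let $(\Lambda', \Lambda)$ be a finitely aligned relative category of paths (in our application $\Lambda' = \FAr{\Lambda}$, but the argument uses only the two defining conditions). First observe that $\Lambda'$ is itself a finitely aligned category of paths: a subcategory of a category of paths is a category of paths, and for $\mu, \nu \in \Lambda'$ condition (1) gives a finite $J \subseteq \Lambda'$ with $\mu\Lambda \cap \nu\Lambda = \bigcup_{\lambda \in J}\lambda\Lambda$, so intersecting with $\Lambda'$ and using condition (2) (that $\kappa\Lambda \cap \Lambda' = \kappa\Lambda'$ for $\kappa \in \Lambda'$) gives $\mu\Lambda' \cap \nu\Lambda' = \bigcup_{\lambda \in J}\lambda\Lambda'$. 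Hence $\Spielbergsgroupoid{\Lambda'}$ from \cref{sec:Spielberg_groupoid} is defined, and the content of the statement is that Spielberg's relative construction applied to $(\Lambda', \Lambda)$ reproduces it.

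Next I would compare the two groupoids directly. In Spielberg's construction the subcategory $\Lambda'$ supplies the generating pairs $\alpha, \beta$ of the groupoid elements, while the ambient category---$\Lambda$ in the definition of $G^{\Lambda'}(\Lambda)$, and $\Lambda'$ in the definition of $\Spielbergsgroupoid{\Lambda'}$---enters only through (i) the partial order $\preceq$ used to form filters and the shift maps $\shifton{\alpha}{(-)}$, $\shiftoff{\alpha}{(-)}$ for $\alpha \in \Lambda'$, and (ii) the ideals $E = \alpha\Lambda \setminus \bigcup_i \beta_i\Lambda$ (with $\alpha, \beta_i \in \Lambda'$) defining the basic compact open sets $\hat E$ of the unit space. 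Condition (2) shows that restriction $x \mapsto x \cap \Lambda'$ is a bijection from Spielberg's filter space attached to $(\Lambda', \Lambda)$ onto $\filters{\Lambda'}$, with inverse sending a filter $y$ of $\Lambda'$ to its hereditary closure $\{\zeta \in \Lambda \mid \zeta \preceq \mu \text{ for some } \mu \in y\}$ in $\Lambda$, and that this bijection intertwines the shift maps; condition (1) shows it matches the two families of sets $\hat E$, and hence is a homeomorphism of unit spaces. Transporting the equivalence relation $\sim$, composability, and the composition and inversion formulas of \cref{sec:Spielberg_groupoid} along this homeomorphism then yields a topological groupoid isomorphism $G^{\Lambda'}(\Lambda) \to \Spielbergsgroupoid{\Lambda'}$; these verifications are of the same kind as, and no harder than, the corresponding ones in the proof of \cref{thm:FA_groupoid_isomorphism}.

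The main difficulty is bookkeeping rather than conceptual: one must go through Spielberg's definition of $G^{\Lambda'}(\Lambda)$ in \cite[Definition 14.1]{Spi20} (equivalently \cite[\S 4]{Spi14}) and check that every occurrence of the ambient category $\Lambda$ there is of one of the forms (i), (ii), so that finite alignment of the pair genuinely reduces it to $\Lambda'$. Both defining conditions of a finitely aligned relative category of paths are consumed in this: condition (2) for the bijection of filter spaces and its compatibility with the shift maps, and condition (1) for the matching of the bases of compact open sets. Once the unit spaces are identified, the remaining verifications---composability, composition, inversion, and the homeomorphism property at the level of the groupoid---are mechanical.
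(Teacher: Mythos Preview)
The paper does not give a proof of this proposition at all: it is stated as a direct citation of \cite[Remark 3.7]{Spi14} and is used only as a black box in the final subsection. So there is no ``paper's own proof'' to compare against; your proposal goes well beyond what the paper does by actually sketching why the statement holds.

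As a sketch, your argument is reasonable and hits the right points. The observation that conditions (1) and (2) force $\Lambda'$ to be a finitely aligned category of paths in its own right is correct, and the identification of the unit spaces via $x \mapsto x \cap \Lambda'$ (with inverse given by hereditary closure in $\Lambda$) is the natural map. One small caution: you are implicitly assuming that Spielberg's unit space for the relative construction $G^{\Lambda'}(\Lambda)$ is built from filters in $\Lambda$ that are generated by elements of $\Lambda'$, and that the basic sets are indexed by $E = \alpha\Lambda \setminus \bigcup_i \beta_i\Lambda$ with $\alpha,\beta_i \in \Lambda'$. That is indeed how \cite[\S 4]{Spi14} is set up, but since the paper here does not recall those definitions, your reader has to take this on faith. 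If you want a self-contained write-up you would need to state precisely what $G^{\Lambda'}(\Lambda)$ is before carrying out the comparison; otherwise your final paragraph (``one must go through Spielberg's definition \ldots'') is doing a lot of load-bearing work. For the purposes of this paper, though, simply citing \cite[Remark 3.7]{Spi14} as the author does is entirely adequate.
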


In particular, $G(\FAr{\Lambda}, \Lambda)$ is topologically isomorphic to $G_{\FAr{\Lambda}}^\mathrm{Spi}$. 
By \cref{lem:FA_recon_unit_spaces_coincide}, the unit space of $G(\FAr{\Lambda}, \Lambda)$ is homeomorphic to $\filters{\FAr{\Lambda}}$, which has the basis from \cref{lem:FA_basis_for_topology_on_filters}. 
Recall the $2$-graph $\tg$ from \cref{ex:tg}, for which $\FAr{\tg} = \FA{\tg} \cup \{v\} = \tg \setminus \{\lambda, \mu\}$. 
The nondiscrete elements of $\filters{\FAr{\tg}}$ are $\principal{t} = \lim_{n \to \infty} \principal{\beta_n}$, $\principal{w} = \lim_{n \to \infty} \principal{\alpha_n}$ and $\principal{v} = \lim_{n \to \infty} \principal{\lambda\alpha_n}$. 
This third nondiscrete element is due to the fact that $\FAr{\tg}$ does not contain $\lambda$ or $\mu$, so the basic open sets containing $\principal{v}$ are of the form 
\[
    \Fell{\filters{\FAr{\tg}}}{v}{\{\lambda\alpha_{n_1}, \dots, \lambda\alpha_{n_k}\}}.
\]
On the other hand, the only nondiscrete elements of $\PS{\tg}$ are $\principal{t} = \lim_{n \to \infty} \principal{\beta_n}$ and $\principal{w} = \lim_{n \to \infty} \principal{\alpha_n}$. 
Therefore, $\filters{\FAr{\tg}}$ is not homeomorphic to our path space $\PS{\tg}$, and so $G(\FAr{\tg}, \tg)$ is not topologically isomorphic to our path groupoid $\PG{\tg}$.

\section*{Acknowledgements}

The theorems of \cref{sec:path_space,sec:path_groupoid} were first proved in the author's thesis \cite{Jon24} under the supervision of Lisa Orloff Clark and Astrid an Huef. 
Their support and ideas have been pivotal. 
The author is also grateful to thesis reviewers whose ideas led to the results in \cref{sec:finite_alignment,sec:reconciliation}. 
This research is supported by the Marsden Fund grant 21-VUW-156 from the Royal Society of New Zealand and grant P1-0288 from the Slovenian Research and Innovation Agency.

\printbibliography

\end{document}

\typeout{get arXiv to do 4 passes: Label(s) may have changed. Rerun}